


\documentclass[12pt]{amsart}
\usepackage[top=0.9in, bottom=1in, left=1in, right=1in]{geometry}
\usepackage{amsmath, amssymb, amsthm}
\usepackage{verbatim}
\usepackage{graphicx}
\usepackage{enumerate}
\usepackage{mathrsfs}


\linespread{1.1}









\newtheorem{thm}{Theorem}[section]
\newtheorem{prop}[thm]{Proposition}
\newtheorem{lem}[thm]{Lemma}
\newtheorem{cor}[thm]{Corollary}




\def\XXint#1#2#3{{\setbox0=\hbox{$#1{#2#3}{\int}$ }
\vcenter{\hbox{$#2#3$ }}\kern-.6\wd0}}





%



\theoremstyle{definition}
\newtheorem{definition}[thm]{Definition}
\newtheorem{example}[thm]{Example}




\theoremstyle{remark}

\newtheorem{remark}[thm]{Remark}


\numberwithin{equation}{section}

\newtheoremstyle{ser}
{8pt}
{8pt}
{\it}
{}
{\sf}
{:}
{6mm}
{}

\newtheoremstyle{serr}
{8pt}
{8pt}
{\normalfont}
{}
{\sf}
{.}
{6mm}
{}

\theoremstyle{ser}

\theoremstyle{serr}







\begin{document}


\title[Extensions of the Hadamard Determinant Inequality]{The Hadamard Determinant Inequality - Extensions to Operators on a Hilbert Space}


\author{Soumyashant Nayak}
\email{nsoum@upenn.edu}
\urladdr{www.math.upenn.edu/$\sim$nsoum} 
\address{Smilow Center for Translational Research, University of Pennsylvania, Philadelphia, PA 19104}




\begin{abstract}
A generalization of classical determinant inequalities like Hadamard's inequality and Fischer's inequality is studied. For a version of the inequalities originally proved by Arveson for positive operators in von Neumann algebras with a tracial state, we give a different proof. We also improve and generalize to the setting of finite von Neumann algebras, some `Fischer-type' inequalities by Matic for determinants of perturbed positive-definite matrices. In the process, a conceptual framework is established for viewing these inequalities as manifestations of Jensen's inequality in conjunction with the theory of operator monotone and operator convex functions on $[0,\infty)$. We place emphasis on documenting necessary and sufficient conditions for equality to hold.
\end{abstract}






\maketitle

\section{Introduction}

At their core, the many applications of determinants in mathematical analysis are based on the geometric interpretation of the determinant of a square matrix as the (signed) volume of an $n$-parallelepiped with sides as the column vectors of the matrix. For instance, the change-of-variables formula in multidimensional integration involves the determinant of the Jacobian matrix. The study of estimates for the determinant of a matrix in terms of determinants of its principal submatrices is often useful as information about compressions of a matrix $A$ to certain subspaces ({\it i.e.}\ $PAP$ for a projection $P$) may be more readily available. An element of $M_n(\mathbb{C})$, the set of complex $n \times n $ matrices, is said to be {\it positive-semidefinite} if it is Hermitian with non-negative eigenvalues, and {\it positive-definite} if it is positive-semidefinite with strictly positive eigenvalues. Let $A$ be a positive-definite matrix with $(i, j)^{\textrm{th}}$ entry denoted by $a_{ij}$. Hadamard's inequality (\cite{hadamard}) states that  the determinant of a positive-definite matrix is less than or equal to the product of the diagonal entries of the matrix {\it i.e.}\ $\det A \le \prod_{i=1}^n a_{ii}$. Further, equality holds if and only if $A$ is a diagonal matrix. As a corollary, which is usually referred to by the same name, we get that the absolute value of the determinant of a square matrix is less than or equal to the product of the Euclidean norm of its column vectors (or alternatively, row vectors). In the case of real matrices, the inequality conveys the geometrically intuitive idea that an $n$-parallelepiped with prescribed lengths of sides has largest volume if and only if the sides are mutually orthogonal. An important application of this inequality to the theory of integral equations is in proving convergence results in classical Fredholm theory (\cite[section 5.3]{int-eqn}). More generally, a similar inequality, known as Fischer's inequality (\cite{fischer}), holds if one considers the principal diagonal blocks of a positive-definite matrix in block form. Hadamard's inequality is a corollary of Fischer's inequality by considering blocks of size $1 \times 1$.

For $n \in \mathbb{N}$, we denote the indexing set $\{ 1, 2, \cdots, n \}$ by $\langle n \rangle$. In Fischer's inequality below, for an $n \times n$ matrix $A$ and $\alpha \subseteq \langle n \rangle$, the principal submatrix of $A$ from rows and columns indexed by $\alpha$ is denoted by $A[\alpha]$. 

\begin{thm}[Fischer's inequality]
\label{thm:det-ineq-fischer}
{\textsl Let $A$ be a positive-definite matrix in $M_n(\mathbb{C})$. Let $\alpha_i \subseteq \langle n \rangle$ for $i \in \langle k \rangle$ such that $\alpha_i \cap \alpha_j  = \varnothing$ for $i, j \in \langle k \rangle, i \ne j$. Then $$\det(A[\cup_{i=1}^k \alpha_i ]) \le \prod_{i=1}^k \det(A[\alpha_i])$$ with equality if and only if $A[\cup_{i=1}^k \alpha_i] = P \mathrm{diag}(A[\alpha_1], \cdots, A[\alpha_k]) P^{-1}$ for some permutation matrix $P$.}
\end{thm}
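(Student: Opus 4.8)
The plan is to reduce the general statement to the two-block case $k=2$ by induction, and to handle that base case through the Schur complement identity for determinants. First I would observe that the inequality involves only the principal submatrix $A[\beta]$, where $\beta = \cup_{i=1}^k \alpha_i$, and that $A[\beta]$ is itself positive-definite, being the compression $P_\beta A P_\beta$ of a positive-definite matrix to a coordinate subspace. Thus I may assume without loss of generality that $\beta = \langle n \rangle$. Conjugating $A$ by a permutation matrix $P$ — which changes neither $\det A$ nor the multiset $\{\det(A[\alpha_i])\}$, but merely relabels the index set — I can further arrange that each $\alpha_i$ is a block of consecutive indices, so that $A$ is displayed in $k \times k$ block form with diagonal blocks $A[\alpha_1], \dots, A[\alpha_k]$.

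Next I would prove the two-block case, using as the engine two standard facts: (i) writing $A = \begin{pmatrix} B & C \\ C^* & D \end{pmatrix}$ with $B = A[\alpha_1]$ invertible, the Schur complement identity gives $\det A = \det B \cdot \det(D - C^* B^{-1} C)$; and (ii) for positive-definite $X \le Y$ one has $\det X \le \det Y$, with equality if and only if $X = Y$. Since $B > 0$ forces $C^* B^{-1} C \ge 0$, we have $0 < D - C^* B^{-1} C \le D$, whence $\det(D - C^* B^{-1} C) \le \det D$ and therefore $\det A \le \det B \det D = \det(A[\alpha_1])\,\det(A[\alpha_2])$. For general $k$ I would induct on the number of blocks, grouping $\gamma = \cup_{i=1}^{k-1}\alpha_i$, applying the two-block inequality to the pair $(\gamma, \alpha_k)$, and then the inductive hypothesis to $A[\gamma]$.

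For the equality characterization the strict monotonicity in (ii) is decisive: equality in the two-block case forces $D - C^* B^{-1} C = D$, i.e. $C^* B^{-1} C = 0$, and since $B^{-1} > 0$ this gives $C = 0$, so $A$ is block diagonal. Propagating this through the induction, equality in the full inequality forces $a_{pq} = 0$ whenever $p \in \alpha_i$ and $q \in \alpha_j$ with $i \ne j$; undoing the initial permutation then yields $A[\beta] = P\,\mathrm{diag}(A[\alpha_1], \dots, A[\alpha_k])\,P^{-1}$, and conversely this normal form clearly gives equality since the determinant of a block-diagonal matrix is the product of the block determinants.

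I expect the main obstacle to lie in the equality analysis rather than the inequality itself: one must verify that equality in the chained inequality genuinely forces equality at every stage of the induction, and that the accumulated vanishing of all cross-blocks assembles, after reversing the permutation, into precisely the claimed block-diagonal form. One point worth isolating as a preliminary lemma is the strict determinant monotonicity used in (ii), namely $0 < X \le Y \Rightarrow \det X \le \det Y$ with equality iff $X = Y$, which I would establish by factoring $\det Y = \det X \cdot \det(X^{-1/2} Y X^{-1/2})$ and observing that $X^{-1/2} Y X^{-1/2} \ge I$ has all eigenvalues at least $1$, with the product of those eigenvalues equal to $1$ exactly when each equals $1$.
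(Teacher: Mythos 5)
Your proof is correct, but it is not the paper's route. The paper never argues with Schur complements and induction on blocks; it derives Fischer's inequality (Section 5, Application 2) as a special case of its operator-algebraic Theorem \ref{thm:general-hadamard}: taking $\mathscr{R}=M_n(\mathbb{C})$, $\mathscr{S}=M_{n_1}(\mathbb{C})\oplus\cdots\oplus M_{n_k}(\mathbb{C})$, and $\Phi$ the trace-preserving conditional expectation that keeps the diagonal blocks, one gets $\det A=\Delta(A)^n\le\Delta(\Phi(A))^n=\prod_i\det A[\alpha_i]$, where the key inequality $\Delta(A)\le\Delta(\Phi(A))$ is proved from Jensen's inequality for states (Lemma \ref{lem:det-trace-jensen}, $\Delta(X)\le\tau(X)$ applied to $\Phi(A)^{-1/2}A\Phi(A)^{-1/2}$) together with multiplicativity of the Fuglede--Kadison determinant, and the equality condition $\Phi(A)=A$ comes from faithfulness of the trace. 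By contrast, you reduce to the two-block case, use $\det A=\det B\cdot\det(D-C^*B^{-1}C)$, and push equality through an induction; your auxiliary lemma ($0<X\le Y\Rightarrow\det X\le\det Y$ with equality iff $X=Y$) plays the role that property (v) of $\Delta$ plays in the paper, where it is established via operator monotonicity of $\log$ rather than eigenvalue counting. What each approach buys: yours is elementary, self-contained linear algebra requiring no trace, no conditional expectations, and no functional calculus; the paper's is heavier but is the entire point of the article --- the same argument verbatim gives Hadamard's and Fischer's inequalities in any finite von Neumann algebra with a faithful normal tracial state, identifies equality conditions there, and feeds the later perturbation (Matic-type) inequalities, none of which is reachable by a finite induction on blocks or by eigenvalue arguments. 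One small remark: your permutation-reduction step and the propagation of equality through the induction are exactly right, but note that in the paper's formulation the permutation bookkeeping is absorbed into the statement $\Phi(A)=A$, i.e.\ the vanishing of all off-diagonal blocks, so no separate assembly step is needed there.
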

Unwrapping the notation in the above theorem, we have that the determinant of a positive-definite matrix (in block form) is less than or equal to the product of the determinants of its principal diagonal blocks, with equality if and only if the entries outside the principal diagonal blocks are all $0$. We state an application of this result to information theory. For a multivariate normal random variable $(X_1, X_2, \cdots, X_n)$ with mean $\mathbf{0}$, covariance matrix $\Sigma$ and hence density $$f(\mathbf{x}) = \frac{1}{(2\pi)^{n/2} \det(\Sigma)^{1/2}}\exp(-\frac{1}{2}\mathbf{x}^T \Sigma ^{-1} \mathbf{x}), \mathbf{x} \in \mathbb{R}^n,$$ the Shannon entropy is  given by, $$h(X_1, \cdots, X_n) = - \int_{\mathbb{R}^n} f \log f \; d\mathbf{x} = \frac{1}{2} \ln ((2\pi e)^n \det(\Sigma)).$$ Fischer's inequality conveys the sub-additivity of entropy in the case of normal random variables \textit{i.e.}\ if the collection of normal random variables $X_1, \cdots, X_n$ is partitioned into disjoint subcollections, the sum of the entropies of the subcollections is bigger than the entropy of the whole collection, with equality if and only if the subcollections are mutually independent.

In \cite[Corollary 4.3.4]{general-hadamard}, Arveson obtains a generalized version of Hadamard's inequality for von Neumann algebras with a tracial state $\tau$, involving the Fuglede-Kadison determinant denoted by $\Delta$, which we paraphrase below. 

\begin{thm}
\label{thm:arveson-hadamard}
Let $\Phi$ be a $\tau$-preserving conditional expectation on a von Neumann subalgebra $\mathscr{S}$ of a von Neumann algebra $\mathscr{R}$. Then $$\Delta(A) \le \Delta(\Phi(A)),$$ for every positive $A$ in $\mathscr{R}$.
\end{thm}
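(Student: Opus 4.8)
The plan is to convert the determinant inequality into a trace inequality through the logarithmic formula for the Fuglede--Kadison determinant, and then to obtain that trace inequality from the operator concavity of the logarithm combined with the trace-preserving hypothesis on $\Phi$. For a positive invertible operator $A$ in $\mathscr{R}$, recall that $\Delta(A) = \exp(\tau(\log A))$. As $\exp$ is strictly increasing, the assertion $\Delta(A) \le \Delta(\Phi(A))$ is equivalent to the scalar inequality $\tau(\log A) \le \tau(\log \Phi(A))$.

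The crux is the operator Jensen inequality. Since $\Phi$ is a conditional expectation, it is a unital completely positive map; and since $t \mapsto \log t$ is operator concave on $(0,\infty)$, the Choi--Davis--Jensen inequality gives
$$\Phi(\log A) \le \log \Phi(A).$$
Applying the order-preserving tracial state $\tau$ and using the hypothesis $\tau \circ \Phi = \tau$, we obtain
$$\tau(\log A) = \tau(\Phi(\log A)) \le \tau(\log \Phi(A)),$$
which is the required inequality.

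The principal obstacle is extending the argument from invertible operators to arbitrary positive $A$, since $\log A$ need not be bounded and $\Delta(A)$ may vanish. I would address this by working with $A + \epsilon I$ for $\epsilon > 0$: the unitality of $\Phi$ yields $\Phi(A + \epsilon I) = \Phi(A) + \epsilon I$, and monotone convergence applied to the spectral integrals defining the determinant shows that $\Delta(A + \epsilon I) \to \Delta(A)$ and $\Delta(\Phi(A) + \epsilon I) \to \Delta(\Phi(A))$ as $\epsilon \to 0^+$. Passing to the limit in the inequality already established for the invertible operators $A + \epsilon I$ then delivers the general case. The remaining point to verify carefully is simply the validity of the operator Jensen inequality for the possibly infinite-dimensional map $\Phi$, which is guaranteed by its unitality and complete positivity.
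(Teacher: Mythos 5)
Your proof is correct, and it takes a genuinely different route from the paper's. The paper proves this statement via Theorem \ref{thm:general-hadamard} together with Remark \ref{rmrk:general-hadamard}, and that proof avoids operator convexity altogether: it applies the trace--determinant inequality $\Delta(X)\le\tau(X)$ (Lemma \ref{lem:det-trace-jensen}, a purely scalar Jensen argument through the abelian algebra generated by $X$) to the operator $X=\Phi(A)^{-\frac{1}{2}}A\Phi(A)^{-\frac{1}{2}}$, computes $\tau(X)=\tau(\Phi(A\Phi(A)^{-1}))=\tau(\Phi(A)\Phi(A)^{-1})=1$ using the $\mathscr{S}$-bimodule property of $\Phi$ together with trace-preservation, and concludes by multiplicativity of $\Delta$; the singular case is handled by the same $A+\varepsilon I$ limit you use. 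Your argument replaces all of this with the Choi--Davis--Jensen inequality $\Phi(\log A)\le\log\Phi(A)$ for the operator concave function $\log$, followed by an application of $\tau$. What the paper's route buys: it is essentially self-contained (scalar Jensen plus multiplicativity of the Fuglede--Kadison determinant), and when $\tau$ is faithful the equality condition $\Phi(A)=A$ falls out of the same computation. What your route buys: you never invoke the module property (ii) of conditional expectations, only unitality, positivity, and $\tau\circ\Phi=\tau$; moreover complete positivity is not actually needed, since the operator Jensen inequality holds for unital positive maps (see \cite{han-ped-jen}), although it is automatic for conditional expectations in any case. Consequently your argument proves the paper's stronger Corollary \ref{cor:general-hadamard-unit-positive} for trace-preserving unital positive maps, which the paper reaches separately via Proposition \ref{prop:trace-unit-positive} by compressing to a masa. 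Your proof is in effect the specialization to $f(x)=x$ of the paper's own proof of Theorem \ref{cor:ineq-det-op-mon}, except that there the key inequality $\Phi(\log f(A))\le\log f(\Phi(A))$ is derived from the integral representation of operator monotone functions rather than cited as a black box.
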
 

In this article, Theorem \ref{thm:general-hadamard} along with Remark \ref{rmrk:general-hadamard} gives us a proof of the above theorem which is different from the one in \cite{general-hadamard}. For the subsequent comments, we make the additional assumptions of faithfulness of the tracial state $\tau$ and regularity of the positive operator $A$ unless stated otherwise. Note that the first assumption necessitates the finiteness of the von Neumann algebra $\mathscr{R}$. In this setting, the new proof has the added advantage of directly yielding the conditions under which equality holds, given by $\Delta(A) = \Delta(\Phi(A)) \Leftrightarrow \Phi(A) = A$. In Theorem \ref{thm:ineq-square-inverse}, using this equality condition we are able to prove that $\Phi(A^{-1}) = \Phi(A)^{-1}$ if and only if $\Phi(A) = A$ which is somewhat surprising as the statement itself has no direct reference to $\Delta$. Our investigation reveals that this offers a small glimpse of a bigger picture. For instance, in Theorem \ref{cor:ineq-det-op-mon}, we prove that if $f$ is a non-constant positive-valued operator monotone function on $(0, \infty)$, $\Delta(f(A)) \le \Delta(f(\Phi(A)))$ with equality if and only if $\Phi(A) = A$. In fact, the result still holds for singular positive operators $A$ and positive-valued operator monotone functions on $[0, \infty)$ but the simple form of the equality condition is rendered ineffective in this scenario. Further in Theorem \ref{thm:ineq-det-unit-positive}, for a trace-preserving unital positive map $\Phi : \mathscr{R} \rightarrow \mathscr{R}$ and a continuous log-convex function $f$, we see that $\Delta(f(\Phi(A))) \le \Delta(f(A))$. As a corollary [Corollary \ref{cor:general-hadamard-unit-positive}], we obtain a version of Theorem \ref{thm:arveson-hadamard} for trace-preserving unital positive maps.

In \cite{matic}, Matic proves two inequalities, in the vein of Fischer's inequality, for the ratio $\frac{\det(A+D)}{\det A}$ to study the change in the determinant of a positive-definite matrix $A$ perturbed by a positive-definite block diagonal matrix $D$. For $k \in \mathbb{N}$, let $n, n_1, \cdots, n_k$ be positive integers such that $n = n_1 + \cdots +n_k$. For $i \in \langle k \rangle$, if $A_i$ is  in $M_{n_i}(\mathbb{C})$, we define an $n \times n$ matrix $\mathrm{diag}(A_1, \cdots, A_k)$ by,
$$\mathrm{diag}(A_1, \cdots, A_k )  := \begin{bmatrix}
A_1 & 0 & \cdots & 0\\
0 & A_2 & \cdots & 0\\
0 & 0 & \ddots & 0\\
0 & \cdots & 0 & A_k
\end{bmatrix}$$
With the above notation, we recall the two main inequalities from \cite{matic}.

\begin{thm}
\label{thm:det-ineq-matic-1}
{\textsl For each $i \in \langle k \rangle$, let $C_i, D_i$ be positive-definite matrices in $M_{n_i}(\mathbb{C})$. Let $C$ be a positive-definite matrix in block form in $M_n(\mathbb{C})$ with principal diagonal blocks given by $C_1, C_2, \cdots, C_k$. Then the following inequality holds, 
\begin{equation}
\label{eqn:det-ineq-matic-1}
\frac{\det(C+ \mathrm{diag}(D_1, \cdots, D_k))}{\det(C)} \ge \frac{\det(C_1 + D_1)}{\det(C_1)} \cdots \frac{\det(C_k + D_k)}{\det(C_k)}.
\end{equation} }
\end{thm}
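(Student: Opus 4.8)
The plan is to pass to logarithms and prove the inequality by differentiating along the segment joining $C$ to $C+D$, where $D := \mathrm{diag}(D_1, \cdots, D_k)$. Taking logarithms, the claimed inequality \eqref{eqn:det-ineq-matic-1} is equivalent to
$$\log\det(C+D) - \log\det C \ge \sum_{i=1}^k \big( \log\det(C_i + D_i) - \log\det C_i \big).$$
To exploit this additive form, I would introduce, for $t \in [0,1]$, the scalar functions $f(t) := \log\det(C + tD)$ and $f_i(t) := \log\det(C_i + tD_i)$. Since $C$ and each $C_i$ are positive-definite and $D, D_i \ge 0$, the matrices $C + tD$ and $C_i + tD_i$ remain positive-definite on $[0,1]$, so each $f$ and $f_i$ is smooth there. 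The target inequality reads $f(1) - f(0) \ge \sum_{i=1}^k \big( f_i(1) - f_i(0) \big)$, which by the fundamental theorem of calculus follows once I establish the pointwise derivative estimate $f'(t) \ge \sum_{i=1}^k f_i'(t)$ for every $t \in (0,1)$.

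Next I would compute these derivatives using the standard identity $\frac{d}{dt}\log\det(X(t)) = \mathrm{tr}\big(X(t)^{-1} X'(t)\big)$, which gives $f'(t) = \mathrm{tr}\big((C+tD)^{-1} D\big)$ and $f_i'(t) = \mathrm{tr}\big((C_i + tD_i)^{-1} D_i\big)$. Because $D$ is block diagonal with diagonal blocks $D_i$, the trace splits as $f'(t) = \sum_{i=1}^k \mathrm{tr}\big( [(C+tD)^{-1}]_{ii}\, D_i \big)$, where $[(C+tD)^{-1}]_{ii}$ denotes the $i$-th principal diagonal block of the inverse. Thus the pointwise inequality reduces, block by block, to showing
$$\mathrm{tr}\big( [(C+tD)^{-1}]_{ii}\, D_i \big) \ge \mathrm{tr}\big( (C_i + tD_i)^{-1} D_i \big).$$

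The heart of the argument, and the step I expect to be the main obstacle, is the matrix inequality $[(C+tD)^{-1}]_{ii} \ge (C_i + tD_i)^{-1}$. To prove it I would write $C + tD$ as a $2 \times 2$ block matrix with $(1,1)$-block $C_i + tD_i$, complementary block $M_i$, and off-diagonal block $B_i$; the block-inverse (Schur complement) formula then gives $[(C+tD)^{-1}]_{ii} = (C_i + tD_i - B_i M_i^{-1} B_i^*)^{-1}$. Since $B_i M_i^{-1} B_i^* \ge 0$, we have $0 < C_i + tD_i - B_i M_i^{-1} B_i^* \le C_i + tD_i$, and the order-reversing property of $X \mapsto X^{-1}$ on positive-definite matrices yields $(C_i + tD_i - B_i M_i^{-1} B_i^*)^{-1} \ge (C_i + tD_i)^{-1}$, as required. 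Finally, for positive-semidefinite $D_i$ the functional $Y \mapsto \mathrm{tr}(Y D_i)$ is monotone, since $Y \ge Y'$ implies $\mathrm{tr}\big((Y - Y')D_i\big) = \mathrm{tr}\big(D_i^{1/2}(Y - Y')D_i^{1/2}\big) \ge 0$; this delivers the block inequality and hence $f'(t) \ge \sum_{i=1}^k f_i'(t)$. Integrating over $[0,1]$ completes the proof. The same Schur-complement identity is what I would then exploit to characterize the equality case, since equality throughout forces $B_i M_i^{-1} B_i^*$ to vanish on the relevant subspaces for each $i$.
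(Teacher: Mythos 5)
Your proposal is correct, but it takes a genuinely different route from the paper. The paper obtains the inequality (\ref{eqn:det-ineq-matic-1}) as the matrix instance of Theorem \ref{thm:ineq-det-matic}: it views the block-diagonal extraction $C \mapsto \mathrm{diag}(C_1,\cdots,C_k)$ as a trace-preserving conditional expectation $\Phi$ on $M_n(\mathbb{C})$, rewrites the determinant ratios via multiplicativity and Sylvester's identity (Lemma \ref{lem:sylvester-rule}), and invokes $\Delta(I+\Phi(X)^{-1}) \le \Delta(I+X^{-1})$ (Corollary \ref{cor:ineq-det-perturb}), which rests on a Jensen-type inequality for the operator monotone function $x \mapsto (1+\tfrac{1}{x})^{-1}$. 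You instead differentiate $t \mapsto \log\det(C+tD)$ along the segment from $C$ to $C+D$, split the trace over the diagonal blocks, and reduce everything to the pointwise estimate $[(C+tD)^{-1}]_{ii} \ge (C_i+tD_i)^{-1}$, proved by the Schur-complement formula for the block of an inverse together with the order-reversing property of inversion on positive-definite matrices; integrating over $[0,1]$ finishes. All of these steps are sound, and your argument even gives the inequality for positive-semidefinite $D_i$ directly. It is worth noting that your key lemma is precisely the finite-dimensional case of the paper's Theorem \ref{thm:ineq-square-inverse}, namely $\Phi(A)^{-1} \le \Phi(A^{-1})$, and the paper's proof of that statement (via Lemma \ref{lem:positive-two-tensor}) is itself a Schur-complement argument --- so the two proofs share the same kernel but leverage it differently. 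What your approach buys is elementarity and self-containment: no von Neumann algebra theory, no integral representation of operator monotone functions, and a sketchable equality analysis (with $D_i$ positive definite, equality forces each off-diagonal block $B_i=0$, i.e., $C$ block diagonal, matching the paper's condition). What the paper's approach buys is generality: the same argument yields the inequality for the Fuglede--Kadison determinant in an arbitrary finite von Neumann algebra, where your path-differentiation of $\log\det$ has no immediate analogue, and the faithfulness of the trace delivers the equality conditions systematically.
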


\begin{thm}
\label{thm:det-ineq-matic-2}
{\textsl For each $i \in \langle k \rangle$, let $C_i, D_i$ be  positive-definite matrices in $M_{n_i}(\mathbb{C})$. Let $C$ be a  positive-definite matrix in block form in $M_n(\mathbb{C})$ such that the principal diagonal blocks of $C^{-1}$ (in block form) is given by $C_1 ^{-1}, C_2 ^{-1}, \cdots, C_k ^{-1}$. Then the following inequality holds, 
\begin{equation}
\label{eqn:det-ineq-matic-2}
\frac{\det(C+ \mathrm{diag}(D_1, \cdots, D_k))}{\det(C)} \le \frac{\det(C_1 + D_1)}{\det(C_1)} \cdots \frac{\det(C_k + D_k)}{\det(C_k)}.
\end{equation}  }
\end{thm}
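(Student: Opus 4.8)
The plan is to reduce the statement to Fischer's inequality (Theorem~\ref{thm:det-ineq-fischer}) by a similarity transformation that converts the relevant non-Hermitian matrix into a positive-definite one whose diagonal blocks carry exactly the data $C_i^{-1}$ and $D_i$. Write $B := C^{-1}$ and $D := \mathrm{diag}(D_1, \dots, D_k)$, and partition $\langle n \rangle$ into the consecutive index sets $\alpha_1, \dots, \alpha_k$ of sizes $n_1, \dots, n_k$, so that the hypothesis reads $B[\alpha_i] = C_i^{-1}$. The first observation is that both sides of \eqref{eqn:det-ineq-matic-2} are ratios of the form $\det(M+N)/\det M = \det(I + M^{-1}N)$: the left-hand side equals $\det(I + C^{-1}D) = \det(I + BD)$, and the $i$-th factor on the right equals $\det(I + C_i^{-1}D_i)$. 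Thus it suffices to prove
\[
\det(I + BD) \le \prod_{i=1}^k \det\bigl(I + B[\alpha_i]\, D_i\bigr).
\]

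The key step, and the main conceptual obstacle, is to symmetrize by $D^{1/2}$ rather than by $B^{1/2}$. Since $BD$ is similar to $D^{1/2}BD^{1/2}$ (conjugate by $D^{1/2}$), the two matrices share a characteristic polynomial, whence $\det(I + BD) = \det(I + \widetilde{B})$ with $\widetilde{B} := D^{1/2} B D^{1/2}$ positive-definite. Crucially, because $D$, and hence $D^{1/2}$, is block diagonal for the partition $\alpha_1, \dots, \alpha_k$, this conjugation respects the block structure: $\widetilde{B}[\alpha_i] = D_i^{1/2}\, B[\alpha_i]\, D_i^{1/2} = D_i^{1/2} C_i^{-1} D_i^{1/2}$. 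This is precisely where the hypothesis on the diagonal blocks of $C^{-1}$ (rather than of $C$) is used: a direct application of Fischer to $I + C^{-1}D$ is impossible since that matrix is not Hermitian, and symmetrizing instead with $C^{1/2}$ would produce a positive-definite matrix whose diagonal blocks bear no clean relation to the $C_i^{-1}$.

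Now $I + \widetilde{B}$ is genuinely positive-definite with diagonal blocks $(I + \widetilde{B})[\alpha_i] = I_{n_i} + D_i^{1/2}C_i^{-1}D_i^{1/2}$, so Fischer's inequality applies directly and yields $\det(I + \widetilde{B}) \le \prod_{i=1}^k \det\bigl(I_{n_i} + D_i^{1/2}C_i^{-1}D_i^{1/2}\bigr)$. Undoing the symmetrization block-by-block (again $D_i^{1/2}C_i^{-1}D_i^{1/2}$ is similar to $C_i^{-1}D_i$) gives $\det\bigl(I_{n_i} + D_i^{1/2}C_i^{-1}D_i^{1/2}\bigr) = \det(I + C_i^{-1}D_i) = \det(C_i + D_i)/\det C_i$, and chaining these identities produces \eqref{eqn:det-ineq-matic-2}.

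Finally, the equality analysis is inherited from the equality clause of Fischer's inequality: equality forces the off-diagonal blocks of $\widetilde{B}$, hence of $C^{-1}$ (the $D_i^{1/2}$ being invertible), and hence of $C$, to vanish, i.e.\ $C = \mathrm{diag}(C_1, \dots, C_k)$. I expect the only delicate point to be the bookkeeping that the $D^{1/2}$-conjugation preserves the diagonal blocks in the required way; everything else is a routine passage between a positive operator and the similar non-Hermitian form $I + C^{-1}D$. It is worth remarking that the same argument transplants verbatim to the finite von Neumann algebra setting by replacing $\det$ with the Fuglede--Kadison determinant $\Delta$ and Fischer's inequality with Theorem~\ref{thm:arveson-hadamard} for the conditional expectation onto the relevant block-diagonal subalgebra.
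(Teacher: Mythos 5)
Your proof is correct and follows essentially the same route as the paper: your conjugation of $I + C^{-1}D$ by $D^{1/2}$ is the matrix form of the paper's use of Sylvester's identity $\Delta(I+AB)=\Delta(I+BA)$ in the proof of Corollary \ref{cor:hadamard-cor1}, and Fischer's inequality applied to $I + D^{1/2}C^{-1}D^{1/2}$ is precisely the specialization of Theorem \ref{thm:general-hadamard} to the block-diagonal conditional expectation, which is exactly how Section \ref{sec:app} deduces Theorem \ref{thm:det-ineq-matic-2} (including your equality condition $C = \mathrm{diag}(C_1,\dots,C_k)$, which matches the paper's condition that $D^{1/2}C^{-1}D^{1/2}$ be block diagonal when $D$ is invertible). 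The one minor difference is that the paper's Sylvester identity needs no invertibility of $D$, so its argument also covers positive-semidefinite $D_i$, whereas your similarity step requires the $D_i$ to be positive-definite — which is exactly the hypothesis of the stated theorem, so there is no gap.
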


As an application of the general framework developed, in this article we view the above inequalities as manifestations of Jensen's inequality in the context of conditional expectations on a finite von Neumann algebra for the choice of the operator monotone function $(1 + \frac{1}{x})^{-1}$ on $(0, \infty)$. Theorem \ref{thm:det-ineq-matic-1} and Theorem \ref{thm:det-ineq-matic-2} may be considered as specific cases of Theorem \ref{thm:ineq-det-matic} and Corollary \ref{cor:hadamard-cor1} respectively, which we state below. Not only does this provide us more insight but it also helps us directly identify the conditions under which equality holds. These equality conditions were not considered in \cite{matic}. In the two results mentioned below, $\mathscr{R}$ is a finite von Neumann algebra with a faithful normal tracial state $\tau$ and $\Phi$ is a $\tau$-preserving conditional expectation onto the von Neumann subalgebra $\mathscr{S}$ of $\mathscr{R}$.
\newline \newline
{\bf Theorem \ref{thm:ineq-det-matic} .} \textsl{For a regular positive operator $A$ in $\mathscr{R}$, and a positive operator $B$ in $\mathscr{S}$, the following inequality holds : 
\begin{equation}
\frac{\Delta(\Phi(A) + B))}{\Delta(\Phi(A))} \le \frac{\Delta(A + B)}{\Delta(A)}.
\end{equation}
If $B$ is regular, equality holds if and only if $\Phi(A) = A$ {\it i.e.}\ $A \in \mathscr{S}$.}
\newline \newline
{\bf Corollary \ref{cor:hadamard-cor1} .} \textsl{For a regular positive operator $A$ in $\mathscr{R}$, and a positive operator $B$ in $\mathscr{S}$, the following inequality holds :
\begin{equation}
\label{eqn:ineq-det-mat-intro}
\frac{\Delta(A + B)}{\Delta(A)} \le \frac{\Delta(\Phi(A^{-1})^{-1} + B)}{\Delta(\Phi(A^{-1})^{-1})},
\end{equation} 
with equality if and only if $B^{\frac{1}{2}}A^{-1}B^{\frac{1}{2}} \in \mathscr{S}$. In particular, if $B$ is regular, equality holds in (\ref{eqn:ineq-det-mat-intro}) if and only if $A \in \mathscr{S}$.}
\newline \newline
We note some improvements to Theorem \ref{thm:det-ineq-matic-1} and Theorem \ref{thm:det-ineq-matic-2} obtained from these generalizations. The inequalities (\ref{eqn:det-ineq-matic-1}) and (\ref{eqn:det-ineq-matic-2}) still hold if the matrices $D_1, \cdots, D_k$ are positive-semidefinite. If the matrices $D_1, \cdots, D_k$ are positive-definite, equality holds in (\ref{eqn:det-ineq-matic-1}) and (\ref{eqn:det-ineq-matic-2}) if and only if $C = \mathrm{diag}(C_1, \cdots, C_k)$ \textit{i.e.}\ $C$ is in block diagonal form. If the matrices $D_1, \cdots, D_k$ are positive-semidefinite and $D := \textrm{diag}(D_1, \cdots, D_k)$, equality holds in inequality (\ref{eqn:det-ineq-matic-2}) if and only if $D^{\frac{1}{2}}C^{-1}D^{\frac{1}{2}}$ is in block diagonal form. Note that when $D$ is positive-definite, $D^{\frac{1}{2}}C^{-1}D^{\frac{1}{2}}$ is in block diagonal form if and only if $C$ is in block diagonal form. Substantially more effort goes into proving Theorem \ref{thm:ineq-det-matic} as compared to Corollary \ref{cor:hadamard-cor1}.

A lot has been said in the literature about operator theoretic versions of Jensen's inequality. For instance, using results by Davis(\cite{davis}), Choi(\cite{choi}), one obtains a general operator theoretic version of Jensen's inequality involving operator convex functions and unital positive maps between $C^*$-algebras. For a continuous function $f$ on an interval $I \subseteq \mathbb{R}$,  if $f(\sum_{i=1}^k A_i T_i A_i ^*) \le \sum_{i=1}^k A_i f(T_i) A_i ^*$, for self-adjoint operators $T_1, \cdots, T_k$ on an infinite-dimensional Hilbert space $\mathscr{H}$ with spectra in $I$, and operators $A_1, \cdots A_k$ on $\mathscr{H}$ such that $\sum_{i=1}^k A_i A_i ^* = I$, we say that $f$ is operator $C^*$-convex.  In \cite{han-ped-jen}, Hansen and Pedersen prove the equivalence of the class of operator convex functions on an interval $I$ and the class of operator $C^*$-convex functions on $I$. Our intention is not to reiterate results from the literature but rather to adapt the key ideas wherever necessary to increase our understanding of determinant inequalities.

This article is organized as follows. In section \ref{sec:prelim}, we setup the background for our discussion. Along with setting up notation, we offer a primer on the theory of von Neumann algebras, the Fuglede-Kadison determinant, conditional expectations on von Neumann algebras, and the theory of operator convex and operator monotone  functions on $[0, \infty)$. In section \ref{sec:use-lemma}, we collect some technical lemmas. The crux of the discussion is in section \ref{sec:detineq1} where extensions of Hadamard's inequality involving the Fuglede-Kadison determinant are proved. In the final section, we discuss applications of the results derived in section \ref{sec:detineq1} and obtain the aforementioned inequalities as special cases.

\subsection*{Acknowledgments} This work is an extension of the research done in the author's doctoral dissertation at the University of Pennsylvania, Philadelphia. The author extends his immense gratitude to his doctoral advisor, Richard Kadison, for his helpful comments regarding this manuscript, especially in regard to the historical context of the Hadamard inequality and its generalizations, and bringing to his attention, Jensen's operator inequality described in \cite{han-ped-jen} by Hansen and Pedersen. The author also thanks Raghavendra Venkatraman and Maxim Gilula for their careful reading of an early draft and their suggestions on the organization and exposition of the material. A token of gratitude is due to an anonymous referee whose keen observations and comments helped clarify certain points in the article.

%
%

\section{Preliminaries}
\label{sec:prelim}
\subsection{Von Neumann Algebras}
For us, $\mathscr{H}$ denotes a separable complex Hilbert space with inner product $\langle \cdot , \cdot \rangle$, and $\mathcal{B}(\mathscr{H})$ is the set of bounded operators from $\mathscr{H}$ to $\mathscr{H}$. We denote the identity operator in $\mathcal{B}(\mathscr{H})$ by $I$ {\it i.e.}\ $Ix = x$ for all $x$ in $\mathscr{H}$. The space $\mathcal{B}(\mathscr{H})$ may be considered as a Banach algebra with the operator norm, and multiplication given by composition of operators. In addition, with the adjoint operation as involution ($T \rightarrow T^*$), it is also a $C^*$-algebra. All norm-closed $*$-subalgebras of $\mathcal{B}(\mathscr{H})$ are also $C^*$-algebras. A positive linear functional $\rho$ on a unital $C^*$-algebra is said to be a {\it state} if $\rho(I) = 1$. A state $\rho$ is said to be faithful if $\rho(A^*A) = 0$ if and only if $A=0$. There are several interesting topologies on $\mathcal{B}(\mathscr{H})$ coarser than the norm topology. Two important ones are {\it the weak-operator topology}, which is the coarsest topology such that for any vectors $x, y $ in $\mathscr{H}$ the functional $\rho_{x, y} : \mathcal{B}(\mathscr{H}) \rightarrow \mathbb{C}$ defined by $\rho_{x,y}  (T) = \langle T x, y \rangle$ is continuous, and the {\it strong-operator topology}, which is the coarsest topology such that for any vector $x$ in $\mathcal{B}(\mathscr{H})$ the map $\omega_x : \mathcal{B}(\mathscr{H}) \rightarrow \mathbb{C}$ defined by $\omega_x(T) = Tx$ is norm-continuous. The commutant of a non-empty subset $\mathscr{F}$ of $\mathcal{B}(\mathscr{H})$ is defined as $\mathscr{F}' := \{ T : AT = TA, \forall A \in \mathscr{F} \}.$ Before we define von Neumann algebras, we recall the von Neumann double commutant theorem (\cite{double-commutant}) to put the algebraic and analytic aspects of von Neumann algebras into perspective, the commutant being an algebraic object, and the topologies considered determining the analytic aspect.
\begin{thm}[double commutant theorem]
\label{thm:bicommutant}
Let $\mathfrak{A}$ be a self-adjoint subalgebra of $\mathcal{B}(\mathscr{H})$ containing the identity operator. Then the following are equivalent:
\begin{itemize}
\item[(i)] $\mathfrak{A}$ is weak-operator closed,
\item[(ii)] $\mathfrak{A}$ is strong-operator closed,
\item[(iii)] $(\mathfrak{A}')' = \mathfrak{A}$. 
\end{itemize}
\end{thm}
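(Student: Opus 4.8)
The plan is to prove the cycle $(\text{i}) \Leftarrow (\text{iii}) \Rightarrow (\text{i}) \Rightarrow (\text{ii}) \Rightarrow (\text{iii})$, reserving essentially all the work for the last implication. The first two are formal. For $(\text{iii}) \Rightarrow (\text{i})$, I would observe that for a fixed operator $A$ the maps $T \mapsto AT$ and $T \mapsto TA$ are weak-operator continuous, since $\langle ATx, y\rangle = \langle Tx, A^*y\rangle$ and $\langle TAx, y\rangle = \langle T(Ax), y\rangle$; hence the commutant of any set, being an intersection of kernels of the weak-operator continuous maps $T \mapsto AT - TA$, is weak-operator closed, and in particular the hypothesis $(\mathfrak{A}')' = \mathfrak{A}$ forces $\mathfrak{A}$ to be weak-operator closed. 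For $(\text{i}) \Rightarrow (\text{ii})$, since the weak-operator topology is coarser than the strong-operator topology, every weak-operator closed set is strong-operator closed.

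The heart of the matter is $(\text{ii}) \Rightarrow (\text{iii})$. Writing $\mathfrak{A}'' := (\mathfrak{A}')'$, the inclusion $\mathfrak{A} \subseteq \mathfrak{A}''$ always holds, so it suffices to establish the density statement that $\mathfrak{A}$ is strong-operator dense in $\mathfrak{A}''$. First I would treat approximation at a single vector. Fix $T \in \mathfrak{A}''$ and $x \in \mathscr{H}$, and let $P$ be the orthogonal projection onto the closed subspace $\overline{\mathfrak{A}x}$. Because $\mathfrak{A}$ is a subalgebra, $\overline{\mathfrak{A}x}$ is invariant under $\mathfrak{A}$; because $\mathfrak{A} = \mathfrak{A}^*$, it is invariant under the adjoints as well, hence reducing, which is precisely the assertion that $P \in \mathfrak{A}'$. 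Since $T$ commutes with every element of $\mathfrak{A}'$, it commutes with $P$; and since $I \in \mathfrak{A}$ we have $x = Ix \in \overline{\mathfrak{A}x}$, so $Px = x$ and therefore $Tx = TPx = PTx \in \overline{\mathfrak{A}x}$. Thus for every $\epsilon > 0$ there is $A \in \mathfrak{A}$ with $\|Tx - Ax\| < \epsilon$.

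To upgrade from one vector to a basic strong-operator neighborhood determined by finitely many vectors $x_1, \dots, x_n$, I would use the amplification trick. Work on $\mathscr{H}^{(n)} = \mathscr{H} \oplus \cdots \oplus \mathscr{H}$ and let $\pi(A) = \mathrm{diag}(A, \dots, A)$ denote the diagonal amplification. A direct matrix computation gives $\pi(\mathfrak{A})' = M_n(\mathfrak{A}')$, the $n \times n$ matrices with entries in $\mathfrak{A}'$, and a second computation, using that the matrix units lie in $M_n(\mathfrak{A}')$ because $I \in \mathfrak{A}'$, yields $\pi(\mathfrak{A})'' = \pi(\mathfrak{A}'')$. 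Hence $\pi(T) \in \pi(\mathfrak{A})''$, and applying the single-vector case to $\pi(\mathfrak{A})$ acting on $\mathscr{H}^{(n)}$ at the single vector $(x_1, \dots, x_n)$ produces $A \in \mathfrak{A}$ with $\sum_{i=1}^n \|Tx_i - Ax_i\|^2 < \epsilon^2$, and so $\|Tx_i - Ax_i\| < \epsilon$ for each $i$. This places $T$ in the strong-operator closure of $\mathfrak{A}$; since that closure equals $\mathfrak{A}$ by hypothesis $(\text{ii})$, we conclude $\mathfrak{A}'' \subseteq \mathfrak{A}$ and therefore $\mathfrak{A}'' = \mathfrak{A}$.

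The main obstacle is the density lemma, and within it the two conceptual moves: recognizing that self-adjointness promotes the mere invariance of $\overline{\mathfrak{A}x}$ to the reducing property, so that the associated projection actually lands in $\mathfrak{A}'$; and arranging the amplification so that the double commutant behaves functorially. The identity $\pi(\mathfrak{A})'' = \pi(\mathfrak{A}'')$ is what allows the single-vector argument to transfer to a neighborhood specified by finitely many vectors. Everything else is routine topology and bookkeeping.
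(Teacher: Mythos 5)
Your proof is correct, but there is nothing in the paper to compare it against: the paper merely recalls the double commutant theorem as background, citing von Neumann's original article, and gives no proof. The argument you present---commutants of arbitrary sets are weak-operator closed, hence (iii) implies (i); the cyclic-subspace projection $P$ onto $\overline{\mathfrak{A}x}$ lies in $\mathfrak{A}'$ by self-adjointness, giving approximation at one vector; and the $n$-fold amplification with $\pi(\mathfrak{A})' = M_n(\mathfrak{A}')$ and $\pi(\mathfrak{A})'' = \pi(\mathfrak{A}'')$ to handle basic strong-operator neighborhoods---is exactly the standard proof found in the references the paper leans on (e.g.\ Kadison--Ringrose, Theorem 5.3.1), and every step, including the use of $I \in \mathfrak{A}$ to get $Px = x$ and the matrix-unit computation identifying $\pi(\mathfrak{A})''$, is sound.
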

\begin{definition}
A von Neumann algebra is a self-adjoint subalgebra of $\mathcal{B}(\mathscr{H})$ containing the identity operator which is closed under the weak operator topology.
\end{definition}
The inspiration to study von Neumann algebras comes from, amongst other things, the study of group representations on infinite-dimensional Hilbert spaces. They were introduced in \cite{ring-op-2} by Murray and von Neumann, as rings of operators. Von Neumann algebras have plenty of projections, in the sense that, the set of linear combinations of projections in a von Neumann algebra $\mathscr{R}$ is norm-dense in $\mathscr{R}$. This is clear from the spectral theorem for self-adjoint operators and the observation that any operator $T$ can be written as a linear combination of self-adjoint operators ($T = \frac{T+T^*}{2} + \textrm{i} \frac{T-T^*}{2\textrm{i}}$.) 

In a bid to classify von Neumann algebras, Murray and von Neumann developed the comparison theory of projections. Two projections $E, F$ in $\mathscr{R}$ are said to be equivalent if there is an operator $V \in \mathscr{R}$ such that $VV^* = E$ and $V^*V = F$, and such a $V$ is called a partial isometry. In general, it is possible to have equivalent projections $E, F$ such that $E \le F$ and $E \ne F$. If $\mathscr{R}$ does not allow for such occurrences, it is said to be a finite von Neumann algebra. Alternatively, the characterizing property for a finite von Neumann algebra is that every isometry is a unitary operator {\it i.e.}\ for $V \in \mathscr{R}$ if $V^*V = I$, then $VV^* = I$. A major part of their study involved classifying the so-called {\it factors}, which are von Neumann algebras with trivial center, the set of scalar multiples of the identity. In a nutshell, factors may be thought of as building blocks of von Neumann algebras, and finite factors may be thought of as building blocks of finite von Neumann algebras. Finite factors come in two flavors : the finite-dimensional kind, given by $M_n(\mathbb{C}), n \in \mathbb{N}$, and the infinite-dimensional kind, the $II_1$ factors. A characterizing property of a finite factor is the existence of a unique faithful normal tracial state {\it i.e.}\ a linear functional $\tau$ satisfying the following conditions : (i) $\tau(AB) = \tau(BA)$ for all $A, B \in \mathscr{R}$, (ii) $\tau(I) = 1$, (iii) (faithfulness) $\tau(A^*A) = 0$ if and only if $A=0$, and (iv) (normality) for an increasing sequence of projections $\{ E_n \}$ we have $\tau(\sup_{n \in \mathbb{N}} E_n) = \sup _{n \in \mathbb{N}} \tau(E_n)$. A von Neumann algebra $\mathscr{R}$ has a faithful normal tracial state if and only if $\mathscr{R}$ is finite. We direct the reader to chapters 5-10 in \cite{kadison-ringrose} for a fuller discussion of this topic.

\subsection{Fuglede-Kadison Determinant}
In \cite{fk-det}, for a finite factor $\mathscr{M}$ with the unique faithful normal tracial state $\tau$, Fuglede and Kadison define the notion of a determinant on $GL_1(\mathscr{M})$, the set of regular operators in $\mathscr{M}$, in the following manner :
$$\Delta : GL_1(\mathscr{M}) \rightarrow \mathbb{R}_{+}, $$
$$\Delta(A) = \exp(\tau(\log (A^*A)^{\frac{1}{2}})).$$
That $\Delta$ makes sense is a consequence of the continuous functional calculus. Several properties of $\Delta$ are studied in \cite{fk-det} and a major portion of the effort goes in proving that it is a group homomorphism. Further, it is proved that for any `reasonable' determinant theory, the determinant of an operator with a non-trivial nullspace must vanish. But in general, there can be other extensions of the definition to singular operators with trivial nullspace. Two such extensions are mentioned; the \textit{algebraic extension} defines the determinant to be zero for all singular operators in $\mathscr{M}$, whereas the \textit{analytic extension} uses the spectral decomposition of $(A^*A)^{\frac{1}{2}} (= \int \lambda \; dE_{\lambda})$ to define $\Delta(A) := \exp(\int \log \lambda \; d\tau(E_{\lambda}))$ with the understanding that $\Delta(A) = 0$ if $\int \log \lambda \; d\tau(E_{\lambda}) = -\infty$. 

Although originally in \cite{fk-det}, the Fuglede-Kadison determinant is defined for finite factors, the discussion carries over to the case of a von Neumann algebra with a tracial state $\tau$. Our primary interest is in the case when the tracial state $\tau$ is faithful and thus, $\mathscr{R}$ is finite. As our results are true for any choice of a faithful normal tracial state, the dependence of $\Delta$ on $\tau$ will be suppressed in the notation. We direct the reader to \cite{harpe} for a masterful account of the Fuglede-Kadison determinant (and its variants) by de la Harpe.

\begin{remark}
Let $\mathscr{R}$ be a finite von Neumann algebra with identity $I$ and a faithful normal tracial state $\tau$. We will consider the analytic extension of Fuglede-Kadison determinant and by abuse of notation, we denote it also by $\Delta$. Below we note some useful properties of $\Delta$ that are pertinent to our discussion. 
\begin{itemize}
\item[(i)] $\Delta(I) = 1$.
\item[(ii)] $\Delta(AB) = \Delta(A) \Delta(B),$ for $A, B$ in $\mathscr{R}$.
\item[(iii)] $\Delta(A^{-1}) = \Delta(A)^{-1},$ for regular $A$ in $\mathscr{R}$.
\item[(iv)] $\Delta$ is continuous on $GL_1(\mathscr{R})$, in the uniform topology.
\item[(v)] $\Delta(A) \le \Delta(B)$,  for $A, B$ in $\mathscr{R}$ such that $0 \le A \le B$ with $A$ being regular, and equality holds if and only if $A=B$.
\item[(vi)] $\lim_{\varepsilon \rightarrow 0^{+}}\Delta(A+\varepsilon I) = \Delta(A),$ for positive $A$ in $\mathscr{R}$.
\end{itemize}
Note that  (v) follows from the fact that $\log$ is an operator monotone function on $(0, \infty),$ (thus, $\log A \le \log B$) and by the faithfulness of the tracial state, we have that $\Delta(A) = \Delta(B) \Leftrightarrow \tau(\log A) = \tau(\log B) \Leftrightarrow \tau(\log B - \log A) = 0 \Leftrightarrow \log A = \log B \Leftrightarrow A = B.$ We will have more to say about operator monotone functions in subsection \ref{sec:lowner}.
\end{remark}

\begin{example}[Fuglede-Kadison determinant for $M_n(\mathbb{C})$]
Let us denote the usual determinant function on $M_n(\mathbb{C})$ by $\det$, and the normalized trace on $M_n(\mathbb{C})$ by $\mathrm{tr}$, defined as the average of the diagonal entries of the matrix. Later when we want to emphasize $n$, we will denote the determinant, normalized trace on $M_n(\mathbb{C})$ by $\mathrm{det}_{\mathrm{n}}, \mathrm{tr}_{\mathrm{n}}$, respectively. For a matrix $A$ in $M_n(\mathbb{C})$, if $\lambda_1, \cdots, \lambda_n$ are the eigenvalues of the positive-definite matrix $(A^*A)^{\frac{1}{2}}$ (counted with multiplicity), we have that, $$\mathrm{tr}(\log (A^*A)^{\frac{1}{2}}) = \frac{\log \lambda_1 + \cdots + \log \lambda_n}{n} = \log  (\sqrt[n]{\lambda_1 \cdots \lambda_n}) = \log (\sqrt[n]{(\det (A^*A)^{\frac{1}{2}})})$$
Thus we see that for the type $I_n$ factor $M_n(\mathbb{C})$ ($n \in \mathbb{N}$), we have the following relationship between $\Delta$ and $\det$ :   
\begin{equation}
\Delta(A) = \sqrt[n]{(\det(A^*A)^{\frac{1}{2}})} = \sqrt[n]{|\det A|},
\end{equation} 
for any $A$ in $M_n(\mathbb{C})$.

\end{example}

\subsection{Conditional Expectations}
Let $\mathscr{R}$ denote a von Neumann algebra with identity $I$. Let $\mathscr{S}$ denote a von Neumann subalgebra of $\mathscr{R}$. Then a map $\Phi : \mathscr{R} \rightarrow \mathscr{S}$ is said to be a {\it conditional expectation} from $\mathscr{R}$ onto $\mathscr{S}$ if it satisfies the following :
\begin{itemize}
\item[(i)] $\Phi$ is linear, positive and $\Phi(I) = I,$
\item[(ii)] $\Phi(S_1RS_2) = S_1 \Phi(R) S_2$ for $R$ in $\mathscr{R}$, and $S_1, S_2$ in $\mathscr{S}$.
\end{itemize}
From (ii), we have that $\Phi(T) = T$ if and only if $T$ is in $\mathscr{S}$.

For a finite von Neumann algebra $\mathscr{R}$ with a faithful normal tracial state $\tau$, a map $\Phi : \mathscr{R} \rightarrow \mathscr{R}$ is said to be \emph{$\tau$-preserving} or \emph{trace-preserving} if $\tau(\Phi(A)) = \tau(A)$ for $A$ in $\mathscr{R}$. In this article, we are primarily interested in trace-preserving conditional expectations on finite von Neumann algebras.
\begin{example}
\label{ex:cond-exp-1}
Let $D_n(\mathbb{C})$ denote the subalgebra of $M_n(\mathbb{C})$ consisting of diagonal matrices. Define a map $\Phi : M_n(\mathbb{C}) \rightarrow D_n(\mathbb{C})$ by $\Phi(A) := \mathrm{diag}(a_{11}, \cdots, a_{nn})$. The map $\Phi$ is a trace preserving conditional expectation from the finite von Neumann algebra $M_n(\mathbb{C})$ onto $D_n(\mathbb{C})$. 

\end{example}

\begin{example}[Conditional expectations on $M_n(\mathbb{C})$]
\label{ex:cond-exp-2}
Let $n = n_1 + \cdots + n_k$, where $n, n_1, \cdots, n_k \in \mathbb{N}$. Note that for matrices $A_1, \cdots, A_k$ in $M_{n_1}(\mathbb{C}), \cdots , M_{n_k}(\mathbb{C})$ respectively, we may construct a matrix $A$ in $M_n(\mathbb{C})$ with these matrices as the principal diagonal blocks and $0$'s elsewhere, \emph{i.e.} $A := \mathrm{diag}(A_1, \cdots, A_k).$ In this manner, one may consider the matrix algebra $M_{n_1}(\mathbb{C}) \oplus \cdots \oplus M_{n_k}(\mathbb{C})$ as a subalgebra of $M_n(\mathbb{C})$ with the same identity. Consider
the map $\Phi : M_n(\mathbb{C}) \rightarrow M_{n_1}(\mathbb{C}) \oplus \cdots \oplus M_{n_k}(\mathbb{C}),$ defined by $\Phi(A) = A_{11} \oplus \cdots \oplus A_{kk}$ where $A_{ii}$'s are the principal $n_i \times n_i$ diagonal blocks of $A$. It is left to the reader to check that $\Phi$ is a trace-preserving conditional expectation from $M_n(\mathbb{C})$ onto $M_{n_1}(\mathbb{C}) \oplus \cdots \oplus M_{n_k}(\mathbb{C})$. 

\end{example}

A positive linear map $\Psi : \mathscr{R} \rightarrow \mathscr{R}$ is said to be {\it $n$-positive} (for $n \in \mathbb{N}$) if $\Psi \otimes I_n : \mathscr{R} \otimes M_n(\mathbb{C}) \rightarrow \mathscr{R} \otimes M_n(\mathbb{C})$ is positive. Further if $\Psi$ is $n$-positive for all $n$ in $\mathbb{N}$, we say that $\Psi$ is {\it completely positive}. We mention without proof the following two theorems about conditional expectations on finite von Neumann algebras that play a fundamental role in section 4.

\begin{thm}[see \cite{umegaki}, \cite{tomiyama}, {\cite[Theorem 7]{cond-expect-kad}}] 
{\textsl Let $\mathscr{R}$ be a finite von Neumann algebra with a faithful normal tracial state $\tau$ and $\mathscr{S}$ be a von Neumann subalgebra of $\mathscr{R}$. Then there is a unique map $\Phi : \mathscr{R} \rightarrow \mathscr{S}$ such that $\tau(\Phi(R)S) = \tau(RS)$ for $R\in \mathscr{R}, S \in \mathscr{S}$, and such a map $\Phi$ is a trace-preserving normal conditional expectation from $\mathscr{R}$ onto $\mathscr{S}$.}

\end{thm}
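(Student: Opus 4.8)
The plan is to realize $\Phi$ through the tracial GNS representation and to characterize it as the $L^2$-orthogonal projection onto the subalgebra. Since $\tau$ is a faithful normal tracial state, the sesquilinear form $\langle A, B\rangle := \tau(B^*A)$ is a genuine inner product on $\mathscr{R}$; let $\mathscr{H}_\tau = L^2(\mathscr{R},\tau)$ denote its completion, let $A \mapsto \widehat{A}$ be the canonical injection of $\mathscr{R}$ into $\mathscr{H}_\tau$ (injective by faithfulness), and write $\Omega = \widehat{I}$. Left multiplication $\pi(A)\widehat{B} := \widehat{AB}$ extends to a $*$-representation with $\|\pi(A)\| \le \|A\|$ (because $\|\widehat{AB}\|^2 = \tau(B^*A^*AB) \le \|A\|^2\|\widehat{B}\|^2$), and the trace furnishes the antiunitary involution $J\widehat{A} := \widehat{A^*}$, for which right multiplication $\rho(A)\widehat{B} := \widehat{BA}$ satisfies $\rho(A) = J\pi(A^*)J$ and commutes with every $\pi(B)$. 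Uniqueness of $\Phi$ is immediate and independent of everything else: if $\Phi_1,\Phi_2$ both satisfy the defining identity then $T := \Phi_1(R) - \Phi_2(R) \in \mathscr{S}$ obeys $\tau(TS) = 0$ for all $S \in \mathscr{S}$; taking $S = T^*$ and invoking faithfulness gives $T = 0$.

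For existence, let $\mathscr{K} := \overline{\{\widehat{S} : S \in \mathscr{S}\}}$, a closed subspace of $\mathscr{H}_\tau$ that is exactly $L^2(\mathscr{S},\tau|_{\mathscr{S}})$, and let $e$ be the orthogonal projection onto $\mathscr{K}$. Since $\mathscr{S}$ is a self-adjoint algebra, $\mathscr{K}$ is invariant (hence reducing) under both $\pi(\mathscr{S})$ and $\rho(\mathscr{S})$, so $e$ commutes with $\pi(S)$ and $\rho(S)$ for every $S \in \mathscr{S}$; and since $J\mathscr{K} = \mathscr{K}$ we also get $Je = eJ$. The heart of the matter — and the step I expect to be the main obstacle — is to show that for each $R \in \mathscr{R}$ the vector $e\widehat{R} \in \mathscr{K}$ is of the form $\widehat{T}$ for a (necessarily unique) bounded $T \in \mathscr{S}$, rather than merely a square-integrable element affiliated with $\mathscr{S}$. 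I would argue this by the standard bounded-vector criterion inside $\mathscr{S}$: using that $e$ commutes with $\rho(S)$ and that $\rho(S)$ commutes with $\pi(R)$, one computes $\rho(S)\, e\widehat{R} = e\,\widehat{RS}$, whence $\|\rho(S)\, e\widehat{R}\| = \|e\widehat{RS}\| \le \|\widehat{RS}\| \le \|R\|\,\|\widehat{S}\|$ for all $S \in \mathscr{S}$. Thus the densely defined map $\widehat{S} \mapsto \rho(S)\,e\widehat{R}$ on $\mathscr{K}$ is bounded by $\|R\|$ and commutes with $\rho(\mathscr{S})$; by the commutation relation $\rho(\mathscr{S})'|_{\mathscr{K}} = \pi(\mathscr{S})$ for the trace it is therefore $\pi(T)|_{\mathscr{K}}$ for some $T \in \mathscr{S}$ with $\|T\|\le\|R\|$, and evaluating at $\Omega$ gives $e\widehat{R} = \widehat{T}$. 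I then set $\Phi(R) := T$, so that $\widehat{\Phi(R)} = e\widehat{R}$ by definition.

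It remains to verify that $\Phi$ has all the advertised properties, each of which I expect to be routine given the construction. Linearity is inherited from that of $e$ and the injectivity of $A \mapsto \widehat{A}$; contractivity ($\|\Phi(R)\| \le \|R\|$) and the fixing of $\mathscr{S}$ ($\Phi(S) = S$, since $\widehat{S}\in\mathscr{K}$) are already in hand. The defining identity follows from self-adjointness of $e$ and $e\widehat{S} = \widehat{S}$: indeed $\tau(S^*\Phi(R)) = \langle e\widehat{R}, \widehat{S}\rangle = \langle \widehat{R}, e\widehat{S}\rangle = \tau(S^*R)$ for all $S \in \mathscr{S}$, and replacing $S$ by $S^*$ and using traciality yields $\tau(\Phi(R)S) = \tau(RS)$; with $S = I$ this gives trace preservation. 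The identity $Je = eJ$ gives $\Phi(R^*) = \Phi(R)^*$, so $\Phi$ preserves self-adjointness; positivity then follows because for $R \ge 0$ and any positive $S \in \mathscr{S}$ one has $\tau(\Phi(R)S) = \tau(RS) = \tau(R^{1/2}SR^{1/2}) \ge 0$, and a self-adjoint element of $\mathscr{S}$ whose trace against every positive element of $\mathscr{S}$ is non-negative must itself be positive (test against the support projection of its negative part and use faithfulness). The bimodule property $\Phi(S_1 R S_2) = S_1 \Phi(R) S_2$ drops out of the commutation relations, since $e\,\widehat{S_1 R S_2} = e\,\pi(S_1)\rho(S_2)\widehat{R} = \pi(S_1)\rho(S_2)\,e\widehat{R}$. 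Finally, normality is checked on a bounded increasing net $R_\alpha \uparrow R$: positivity makes $\Phi(R_\alpha)$ increase to some $T \le \Phi(R)$, and comparing $\tau(TS) = \lim_\alpha \tau(\Phi(R_\alpha)S) = \lim_\alpha \tau(R_\alpha S) = \tau(\Phi(R)S)$ (normality of $\tau$) for positive $S$, then using faithfulness with $S=I$, forces $T = \Phi(R)$.
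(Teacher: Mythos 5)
Your proof is correct. Note, however, that the paper itself offers no proof of this theorem: it is stated ``without proof'' with citations to Umegaki, Tomiyama, and Kadison, so there is no internal argument to compare yours against; what you have written is the standard self-contained construction of the trace-preserving conditional expectation as (the restriction of) the orthogonal projection $e : L^2(\mathscr{R},\tau) \to L^2(\mathscr{S},\tau)$. All your verification steps are sound: uniqueness via faithfulness with $S = T^*$; the defining identity from self-adjointness of $e$; trace preservation at $S=I$; $\Phi(R^*)=\Phi(R)^*$ from $Je=eJ$; positivity by testing $\tau(\Phi(R)S)\ge 0$ against positive $S\in\mathscr{S}$ and the support projection of the negative part; the bimodule property from $e$ commuting with $\pi(\mathscr{S})$ and $\rho(\mathscr{S})$; and normality via monotone nets together with normality and faithfulness of $\tau$. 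The one place where your argument leans on a theorem of depth comparable to the statement being proved is the commutation relation $\rho(\mathscr{S})'|_{\mathscr{K}} = \pi(\mathscr{S})|_{\mathscr{K}}$ on $\mathscr{K}=L^2(\mathscr{S},\tau)$, which you use to convert the bounded $\rho(\mathscr{S})$-commuting operator $\widehat{S}\mapsto \rho(S)e\widehat{R}$ into left multiplication by some $T\in\mathscr{S}$. That commutant theorem for the trace representation is genuinely nontrivial (for a general state it is Tomita--Takesaki theory, though for a trace it has an elementary proof), so your argument is complete only modulo citing it --- and you correctly flagged this as the crux. Two small points: in the computation $\rho(S)e\widehat{R} = e\widehat{RS}$ you only need that $e$ commutes with $\rho(S)$ (the commutation of $\rho(S)$ with $\pi(R)$ plays no role there); and the bound $\|T\|\le\|R\|$ uses that $\pi|_{\mathscr{K}}$ is a faithful, hence isometric, representation of $\mathscr{S}$, which is worth saying explicitly.
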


\begin{thm}[{\cite[Theorem 1]{cond-exp}}]
\label{thm:cond-exp-comp-positive}
{\textsl Let $\mathscr{R}$ be a finite von Neumann algebra with a faithful normal tracial state $\tau$ and $\mathscr{S}$ be a von Neumann subalgebra of $\mathscr{R}$. Then the trace-preserving normal conditional expectation from $\mathscr{R}$ onto $\mathscr{S}$ is a completely positive map.}
\end{thm}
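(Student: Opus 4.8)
The plan is to exhibit the trace-preserving conditional expectation $\Phi$ in Stinespring form, realizing it as the compression of a $*$-representation, and then to invoke the elementary fact that such compressions are automatically completely positive. (Alternatively one could appeal to Tomiyama's theorem that a norm-one projection onto a subalgebra is completely positive, but the GNS realization below is self-contained and transparent.)

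First I would build the GNS Hilbert space. Since $\tau$ is a faithful normal tracial state, the sesquilinear form $\langle A, B\rangle = \tau(B^*A)$ is a genuine inner product on $\mathscr{R}$; let $\mathscr{K}$ denote the completion (the space $L^2(\mathscr{R}, \tau)$) and write $\hat{A}$ for the image of $A \in \mathscr{R}$. Left multiplication gives a faithful normal $*$-representation $\pi_{\mathscr{R}} : \mathscr{R} \rightarrow \mathcal{B}(\mathscr{K})$, $\pi_{\mathscr{R}}(A)\hat{B} = \widehat{AB}$. The closure of $\{ \hat{S} : S \in \mathscr{S} \}$ is a closed subspace $\mathscr{H}_0 \cong L^2(\mathscr{S}, \tau|_{\mathscr{S}})$ on which $\mathscr{S}$ acts by its own GNS representation $\pi_{\mathscr{S}}$; let $V : \mathscr{H}_0 \hookrightarrow \mathscr{K}$ be the inclusion isometry, so that $V^*$ is the orthogonal projection onto $\mathscr{H}_0$.

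The key step is the identity $\pi_{\mathscr{S}}(\Phi(A)) = V^* \pi_{\mathscr{R}}(A) V$ for all $A \in \mathscr{R}$. To verify it I would test both sides against vectors $\hat{S}, \hat{T}$ with $S, T \in \mathscr{S}$, which are dense in $\mathscr{H}_0$. The right side gives $\langle \pi_{\mathscr{R}}(A)\hat{S}, \hat{T}\rangle_{\mathscr{K}} = \tau(T^* A S)$, while the left side gives $\tau(T^*\Phi(A)S)$. Using the trace property and the defining relation $\tau(\Phi(R)S') = \tau(RS')$ (for $S' \in \mathscr{S}$) from the preceding theorem, applied with $S' = S T^*$, one gets $\tau(T^*\Phi(A)S) = \tau(\Phi(A) S T^*) = \tau(A S T^*) = \tau(T^* A S)$. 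Hence the two sides agree on a dense set, and therefore everywhere. With this identity in hand complete positivity is immediate: for a positive $[A_{ij}] \in M_n(\mathscr{R})$ the matrix $[\pi_{\mathscr{R}}(A_{ij})]$ is positive, since $\pi_{\mathscr{R}} \otimes \mathrm{id}_n$ is a $*$-homomorphism; conjugating by $I_n \otimes V$ preserves positivity, so $[\pi_{\mathscr{S}}(\Phi(A_{ij}))] = [V^*\pi_{\mathscr{R}}(A_{ij})V] \ge 0$. Because $\pi_{\mathscr{S}}$ is a faithful (hence order-embedding) normal representation of $\mathscr{S}$, this forces $[\Phi(A_{ij})] \ge 0$ in $M_n(\mathscr{S})$, which is exactly complete positivity of $\Phi$.

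I expect the one genuinely delicate point to be the verification of the Stinespring identity, specifically the bookkeeping with the trace and the defining property of $\Phi$, together with the routine but necessary observations that $\mathscr{S}$ indeed leaves $\mathscr{H}_0$ invariant and that $\pi_{\mathscr{S}}$ is faithful, so that positivity can be transported back to $M_n(\mathscr{S})$. Everything else is the standard compression argument for maps of the form $A \mapsto V^* \pi(A) V$.
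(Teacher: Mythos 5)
Your proof is correct, but there is nothing in the paper to compare it against: the author states this theorem explicitly \emph{without} proof, importing it from Nakamura--Takesaki--Umegaki \cite{cond-exp} (it is one of the two results in Section 2 that the paper says are ``mentioned without proof''). Taken on its own terms, your argument is the standard self-contained one: realize $\Phi$ in Stinespring form, $\pi_{\mathscr{S}}(\Phi(A)) = V^{*}\pi_{\mathscr{R}}(A)V$, where $V$ is the isometric inclusion of $L^{2}(\mathscr{S},\tau|_{\mathscr{S}})$ into $L^{2}(\mathscr{R},\tau)$, and then use that compressions of $*$-representations are completely positive, together with the fact that the faithful representation $\pi_{\mathscr{S}}$ (and hence $\pi_{\mathscr{S}}\otimes\mathrm{id}_{n}$) reflects positivity. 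The key computation is right: $\tau(T^{*}\Phi(A)S)=\tau(\Phi(A)ST^{*})=\tau(AST^{*})=\tau(T^{*}AS)$, and the bimodule relation $\tau(\Phi(R)S')=\tau(RS')$ you invoke is indeed available, either from the uniqueness theorem stated just before this one in the paper, or directly from the module property plus trace preservation: $\tau(RS')=\tau(\Phi(RS'))=\tau(\Phi(R)S')$. Since both sides of your identity are bounded operators, agreement of matrix coefficients on the dense sets $\{\hat S\}$, $\{\hat T\}$ does give equality everywhere. Two small points worth tightening in a written version: $V^{*}$ is the orthogonal projection onto $\mathscr{H}_{0}$ only when regarded as a map $\mathscr{K}\to\mathscr{H}_{0}$ (the projection in $\mathcal{B}(\mathscr{K})$ is $VV^{*}$, the Jones projection $e_{\mathscr{S}}$); and one should record why left multiplication is $\|\cdot\|_{2}$-bounded, namely $\tau(B^{*}A^{*}AB)\le\|A\|^{2}\tau(B^{*}B)$, so that $\pi_{\mathscr{R}}$ is well defined. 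Neither affects correctness. Your alternative suggestion (Tomiyama's theorem that any norm-one projection onto a subalgebra is completely positive) would also work and is strictly more general since it uses no trace; the GNS compression route you chose is the one best adapted to the finite, tracial setting of this paper.
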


\subsection{Operator Monotone and Operator Convex Functions}
\label{sec:lowner}

 A continuous function $f$ defined on the interval $\Gamma \subseteq \mathbb{R}$ is said to be {\it operator monotone} if for self-adjoint operators $A, B$ on an infinite-dimensional Hilbert space $\mathscr{H}$, with spectra in $\Gamma$, such that $A \le B$, we have that $f(A) \le f(B)$. For positive operators $A, B$ with $A \le B$ it is not necessarily true that $A^2 \le B^2$. But for $0 < r \le 1$, the L{\"o}wner-Heinz inequality states that $A^r \le B^r$ for $0 < r \le 1$. Thus $x^r$ is an operator monotone function on $[0, \infty)$ for $0 < r \le 1$. Other examples include $\log (1+x)$ on $[0, \infty)$, $\log x$ on $(0, \infty)$. In his seminal paper \cite{lowner}, L{\"o}wner studied operator monotone functions in detail establishing their relationship with a class of analytic functions called Pick functions. We state an integral representation of operator monotone functions on $[0, \infty)$ (see \cite[Chapter V]{bhatia-ma}).
\begin{thm}
\label{thm:int-op-mon}
{\textsl A continuous real-valued function $f$ on $[0, \infty)$ is operator monotone if and only if there is a finite positive measure $\mu$ on $(0, \infty)$ such that $$f(t) = a+ bt + \int_{0}^{\infty} \frac{ (\lambda + 1)t}{\lambda + t} \; d\mu(\lambda), t \in [0, \infty), $$ for some real number $a$, and $b \ge 0$.}
\end{thm}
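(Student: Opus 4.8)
The plan is to prove the two implications separately; sufficiency is a direct verification, while necessity is where L\"owner's theory does the work. For sufficiency, assume $f$ has the stated form. The constant $a$ is trivially operator monotone, and since $b \ge 0$ so is the term $bt$, so it suffices to handle the integral. For each fixed $\lambda > 0$ I would rewrite the kernel as
\[
\frac{(\lambda+1)t}{\lambda+t} = (\lambda+1) - \frac{\lambda(\lambda+1)}{\lambda+t},
\]
and use that $t \mapsto (\lambda + t)^{-1}$ is operator monotone \emph{decreasing} on $[0,\infty)$: if $0 \le A \le B$ then $\lambda I + A \le \lambda I + B$ are positive and invertible, whence $(\lambda I + A)^{-1} \ge (\lambda I + B)^{-1}$. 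Thus each kernel $g_\lambda(t) := \frac{(\lambda+1)t}{\lambda+t}$ is operator monotone. To pass from the scalar identity to the operator statement, fix $0 \le A \le B$ and a unit vector $x$; writing $\langle f(A)x,x\rangle = a + b\langle Ax,x\rangle + \int_0^\infty \langle g_\lambda(A)x,x\rangle \, d\mu(\lambda)$ via the spectral theorem and Fubini --- legitimate because $\mu$ is finite and $0 \le g_\lambda(t) \le \max(1,\|B\|)$ uniformly in $\lambda$ on the joint spectrum --- and integrating the pointwise inequalities $\langle g_\lambda(A)x,x\rangle \le \langle g_\lambda(B)x,x\rangle$ yields $\langle f(A)x,x\rangle \le \langle f(B)x,x\rangle$, i.e.\ $f(A) \le f(B)$.

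For necessity, the route is through Pick (Herglotz--Nevanlinna) functions. The first and decisive step is to show that an operator monotone $f$ is the restriction to $[0,\infty)$ of a function $F$ that is analytic on the upper half-plane and satisfies $\mathrm{Im}\,F \ge 0$ there. I would obtain this from the positivity of the L\"owner (divided-difference) matrices: differentiating $f(A + sH)$ at $s = 0$ along a positive direction $H \ge 0$ and using the Daleckii--Krein formula expresses the derivative as a Hadamard product of $H$ with the L\"owner matrix $\big[\tfrac{f(t_i)-f(t_j)}{t_i - t_j}\big]$, so operator monotonicity forces every such matrix to be positive semidefinite; an interpolation/analytic-continuation argument then upgrades this positivity to the desired Pick extension $F$. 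I expect this to be the main obstacle, since it is the genuinely deep part of the theory, whereas everything afterwards is bookkeeping.

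Given the extension, I would invoke the Nevanlinna representation of Pick functions,
\[
F(z) = \alpha + \beta z + \int_{\mathbb{R}} \Big( \frac{1}{\lambda - z} - \frac{\lambda}{1+\lambda^2} \Big)\, d\nu(\lambda), \qquad \alpha \in \mathbb{R},\ \beta \ge 0,\ \int \frac{d\nu(\lambda)}{1+\lambda^2} < \infty,
\]
and specialize to $[0,\infty)$. Since $f$ is real-analytic and increasing on all of $(0,\infty)$, $F$ continues analytically across $(0,\infty)$ with no pole at $0$, so the support of $\nu$ lies in $(-\infty,0)$. Substituting $\lambda = -\lambda'$ with $\lambda' > 0$, the identity
\[
-\frac{1}{\lambda'+t} = -\frac{1}{\lambda'} + \frac{1}{\lambda'(\lambda'+1)} \cdot \frac{(\lambda'+1)t}{\lambda'+t}
\]
lets me absorb the $t$-independent terms into $a$ (leaving $b = \beta$ as the coefficient of the linear term) and read off the representing measure $d\mu(\lambda') = \frac{1}{\lambda'(\lambda'+1)}\,d\tilde\nu(\lambda')$ against the kernel $\frac{(\lambda'+1)t}{\lambda'+t}$, where $\tilde\nu$ is the pushforward of $\nu$ under $\lambda \mapsto -\lambda$. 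Finiteness of $\mu$ is then immediate from the normalization built into the weight $(\lambda'+1)$: the kernel equals $1$ at $t = 1$, so evaluating the representation there gives $\mu((0,\infty)) = f(1) - a - b < \infty$. Here the finiteness of the constant $a$ reflects the regularity of $f$ at the endpoint $0$, which controls the mass of $\tilde\nu$ near the origin, while the Nevanlinna integrability condition controls it near infinity. This produces the asserted form with $a = f(0)$, $b = \beta \ge 0$, and $\mu$ a finite positive measure on $(0,\infty)$.
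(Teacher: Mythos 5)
The paper does not actually prove this statement: Theorem 2.7 is quoted as background — it is L\"owner's theorem in integral-representation form — with a pointer to \cite[Chapter V]{bhatia-ma}. So there is no in-paper proof to compare against, and your proposal must be judged as a free-standing reconstruction of the classical argument.

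Your sufficiency direction is complete and correct: the decomposition $\frac{(\lambda+1)t}{\lambda+t} = (\lambda+1) - \frac{\lambda(\lambda+1)}{\lambda+t}$, the operator antitonicity of $t \mapsto (\lambda I + t)^{-1}$, and the Fubini step (legitimate since $\mu$ is finite and the kernels are uniformly bounded on the relevant spectra) are exactly what is needed. The necessity direction, however, has a genuine hole at its center. The passage from positivity of all L\"owner matrices $\bigl[\frac{f(t_i)-f(t_j)}{t_i-t_j}\bigr]$ to the existence of an analytic extension $F$ of $f$ mapping the upper half-plane to itself is not ``an interpolation/analytic-continuation argument'' that can be waved at — it \emph{is} L\"owner's theorem, the entire depth of the result, and you acknowledge leaving it as a black box. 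Everything downstream of that step in your write-up is correct bookkeeping: the Nevanlinna representation, the localization of $\operatorname{supp}\nu$ to the negative axis by reflection across $(0,\infty)$ (strictly, to $(-\infty,0]$, with the atom at $0$ excluded by continuity of $f$ at $0$), the identity $-\frac{1}{\lambda'+t} = -\frac{1}{\lambda'} + \frac{1}{\lambda'(\lambda'+1)}\cdot\frac{(\lambda'+1)t}{\lambda'+t}$, the identification $d\mu(\lambda') = \frac{d\tilde\nu(\lambda')}{\lambda'(\lambda'+1)}$, and the finiteness of $\mu$ via $g_\lambda(1)=1$ together with the control of $\tilde\nu$ near the origin coming from finiteness of $f(0^+)$ (a monotone-convergence argument in $t\downarrow 0$, which you correctly flag). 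To make the proof self-contained you would need to supply the missing core, e.g.\ via Pick--Nevanlinna interpolation, Kor\'anyi's Hilbert-space argument, or the Hansen--Pedersen route through operator convexity; as it stands, the attempt is a faithful roadmap of the standard proof rather than a proof.
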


The concept of operator monotonicity is closely related to operator convex functions which were studied by Kraus (\cite{kraus}). Let $f$ be a continuous function on the interval $\Gamma \subseteq \mathbb{R}$. We say that $f$ is {\it operator convex} if $f(\lambda A  + (1- \lambda) B) \le \lambda f(A) + (1 - \lambda) f(B)$, for each $\lambda$ in $[0, 1]$, and self-adjoint operators $A, B$ on an infinite-dimensional Hilbert space $\mathscr{H}$ with spectra in $\Gamma$. We state an integral representation of operator convex functions on $[0, \infty)$ for which $f'(0^{+})$ exists, which may be derived from Theorem \ref{thm:int-op-mon} above and \cite[Theorem 2.4]{hansen-pedersen}.

\begin{thm}
\label{thm:int-op-conv}
{\textsl A continuous real-valued function $f$ on $[0, \infty)$ such that $f'(0^{+})$ exists is operator convex if and only if there is a finite positive measure $\mu$ on $(0, \infty)$ such that $$f(t) = a + bt + ct^2 + \int_{0}^{\infty} \frac{(\lambda + 1) t^2}{\lambda + t} \; d\mu(\lambda), t \in [0, \infty),$$ for some real numbers $a, b,$ and $c \ge 0$.}
\end{thm}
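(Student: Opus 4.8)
The plan is to reduce the statement to the integral representation for operator monotone functions (Theorem \ref{thm:int-op-mon}) via the correspondence between operator convexity and operator monotonicity recorded in \cite[Theorem 2.4]{hansen-pedersen}. Recall that the latter asserts that a continuous function $h$ on $[0,\infty)$ with $h(0) \le 0$ is operator convex if and only if the function $t \mapsto h(t)/t$ is operator monotone on $(0, \infty)$. The hypothesis that $f'(0^{+})$ exists is precisely what will let me pass from the open interval to the closed interval $[0,\infty)$, where Theorem \ref{thm:int-op-mon} is available.

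For the forward implication, suppose $f$ is operator convex and set $h(t) := f(t) - f(0)$, so that $h$ is operator convex with $h(0) = 0$. By \cite[Theorem 2.4]{hansen-pedersen}, the function $g(t) := h(t)/t = (f(t) - f(0))/t$ is operator monotone on $(0, \infty)$. Since $f'(0^{+})$ exists, $g$ extends continuously to $[0, \infty)$ by setting $g(0) := f'(0^{+})$; a routine approximation (replace $0 \le A \le B$ by $A + \varepsilon I \le B + \varepsilon I$, whose spectra lie in $(0,\infty)$, apply operator monotonicity there, and let $\varepsilon \to 0^{+}$ using norm-continuity of the functional calculus for the continuous function $g$) shows that $g$ is in fact operator monotone on all of $[0, \infty)$. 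Theorem \ref{thm:int-op-mon} then furnishes a real number $a_0$, a constant $b_0 \ge 0$, and a finite positive measure $\mu$ on $(0, \infty)$ with
$$g(t) = a_0 + b_0 t + \int_0^\infty \frac{(\lambda + 1)t}{\lambda + t} \, d\mu(\lambda).$$
Multiplying through by $t$ and restoring the constant gives $f(t) = f(0) + a_0 t + b_0 t^2 + \int_0^\infty \frac{(\lambda+1)t^2}{\lambda + t}\,d\mu(\lambda)$, which is the desired form with $a = f(0)$, $b = a_0$, and $c = b_0 \ge 0$.

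For the converse, I would run the same chain in reverse. Given $f$ of the stated form, the function $g(t) := (f(t) - a)/t = b + ct + \int_0^\infty \frac{(\lambda+1)t}{\lambda+t}\,d\mu(\lambda)$ is, since $c \ge 0$, exactly of the shape described in Theorem \ref{thm:int-op-mon}, hence operator monotone on $[0,\infty)$ and a fortiori on $(0,\infty)$. By the other direction of \cite[Theorem 2.4]{hansen-pedersen}, $f(t) - a = t\, g(t)$ is operator convex (and vanishes at $0$), and adding the constant $a$ preserves operator convexity, so $f$ is operator convex.

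The main obstacle, and the only place real care is needed, is the endpoint behavior at $0$: one must verify that operator monotonicity of $g$ on the open interval together with continuity at $0$ (guaranteed by the existence of $f'(0^{+})$) upgrades to operator monotonicity on the closed interval $[0,\infty)$, so that the representation of Theorem \ref{thm:int-op-mon} can legitimately be invoked. This is exactly why the hypothesis on $f'(0^{+})$ appears: without it, $g$ need only be operator monotone on $(0,\infty)$, and the representation on $[0,\infty)$ may fail. Everything else — the normalization $h(0) = 0 \le 0$ needed to apply \cite[Theorem 2.4]{hansen-pedersen}, and the bookkeeping of the constants $a, b, c$ — is routine.
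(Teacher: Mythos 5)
Your proposal is correct and takes essentially the same approach as the paper: the paper gives no proof of Theorem \ref{thm:int-op-conv}, stating only that it may be derived from Theorem \ref{thm:int-op-mon} together with \cite[Theorem 2.4]{hansen-pedersen}, and your argument is exactly that derivation, with the bookkeeping $a = f(0)$, $b = a_0$, $c = b_0 \ge 0$ carried out correctly in both directions. The one point requiring genuine care --- upgrading operator monotonicity of $(f(t)-f(0))/t$ from $(0,\infty)$ to $[0,\infty)$ using the continuous extension furnished by the existence of $f'(0^{+})$ --- is handled correctly by your $\varepsilon$-perturbation argument.
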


\section{A Collection of Useful Lemmas}
\label{sec:use-lemma}
In this section we gather a collection of disparate results which serve us well in section \ref{sec:detineq1}.
\begin{definition}
A real-valued convex function on the interval $[a, b] \subseteq \mathbb{R}$ is said to be \textit{strictly convex} if for $x, y \in [a, b]$ such that $f(\frac{x+y}{2}) = \frac{f(x) + f(y)}{2}$, we must have $x=y$. 
\end{definition}

\begin{lem}[Jensen's inequality]
\textsl{ For a convex function $f$ on the real interval $[a,b]$ and a probability measure $\mu$ on a compact subset $S$ of $[a, b]$, we have the following inequality, $$f(\int_{S} \lambda \; d\mu(\lambda)) \le  \int_{S} f(\lambda) \; d\mu(\lambda).$$ Further if $f$ is strictly convex, then equality holds if and only if $\mu$ is supported on a point {\it i.e.}\ $\mu$ is a Dirac measure. }
\end{lem}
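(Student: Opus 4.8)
The plan is to reduce the integral inequality to the elementary fact that a convex function lies above each of its supporting lines, and then to extract the equality condition from the strictness of the convexity.

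First, I would set $m := \int_S \lambda \, d\mu(\lambda)$, the barycenter of $\mu$. Since $\mu$ is a probability measure supported on the compact set $S \subseteq [a,b]$, we have $m \in [\min S, \max S] \subseteq [a,b]$. Because $f$ is continuous on the compact interval $[a,b]$ it is bounded there, so $\int_S f \, d\mu$ is finite and all the integrals make sense. Next, I would invoke the existence of a supporting line for the convex function $f$ at the point $m$. When $m$ lies in the open interval $(a,b)$, convexity guarantees a real number $c$ (a subgradient, lying between the left and right derivatives of $f$ at $m$) with
\[
f(\lambda) \ge f(m) + c(\lambda - m), \qquad \lambda \in [a,b].
\]
Integrating this inequality against $\mu$ and using $\int_S (\lambda - m)\, d\mu(\lambda) = 0$ gives
\[
\int_S f(\lambda)\, d\mu(\lambda) \ge f(m) + c\left(\int_S \lambda\, d\mu(\lambda) - m\right) = f(m) = f\left(\int_S \lambda\, d\mu(\lambda)\right),
\]
which is precisely the asserted inequality. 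The boundary cases $m = a$ or $m = b$ I would dispose of separately: if, say, $m = a$, then $\int_S (\lambda - a)\, d\mu(\lambda) = 0$ with a non-negative integrand forces $\mu$ to be the point mass at $a$, in which case both sides coincide.

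For the equality statement, assume $f$ is strictly convex. If $\mu$ is a Dirac measure at $\lambda_0$, then $m = \lambda_0$ and both sides equal $f(\lambda_0)$, so equality holds. Conversely, suppose equality holds. Then the non-negative function $\lambda \mapsto f(\lambda) - f(m) - c(\lambda - m)$ has zero integral against $\mu$, so it vanishes $\mu$-almost everywhere. The key point, which I would prove directly from the midpoint definition of strict convexity, is that this supporting line touches the graph of $f$ only at $m$: if $f(\lambda_0) = f(m) + c(\lambda_0 - m)$ for some $\lambda_0 \neq m$, then evaluating at the midpoint $\tfrac{m + \lambda_0}{2}$ and combining the convexity bound with the supporting-line bound forces $f\left(\tfrac{m+\lambda_0}{2}\right) = \tfrac{1}{2}\left(f(m) + f(\lambda_0)\right)$, whence $m = \lambda_0$ by strict convexity, a contradiction. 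Thus the set $\{\lambda : f(\lambda) = f(m) + c(\lambda - m)\}$ equals $\{m\}$, and $\mu$ is concentrated there, i.e.\ $\mu$ is the Dirac measure at $m$.

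The main obstacle I anticipate is the careful treatment of the supporting line: ensuring its existence when $m$ could be an endpoint of $[a,b]$, and rigorously passing from the vanishing of a non-negative integral to the support condition on $\mu$. The former is handled by the degenerate-case argument above, and the latter is the standard fact that a non-negative measurable function with vanishing integral is zero almost everywhere with respect to $\mu$.
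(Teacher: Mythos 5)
You should first note that the paper never proves this lemma: it is stated in Section 3 as a classical fact and invoked as a black box in the proof of Lemma 3.3 (where the Riesz representation theorem produces the measure $\mu$), so there is no proof of record to compare yours against. Your supporting-line argument is the standard proof and it is correct: a subgradient $c$ at the barycenter $m$ exists whenever $m$ lies in the open interval, integrating the supporting-line inequality against $\mu$ gives the result, the endpoint cases $m=a$ and $m=b$ collapse to Dirac measures exactly as you say, and your equality analysis---showing via the midpoint computation that strict convexity forces the supporting line to touch the graph of $f$ only at $m$---meshes precisely with the paper's Definition 3.1, which defines strict convexity in midpoint form. One technical slip: a convex function on the closed interval $[a,b]$ need not be continuous at the endpoints (take $f$ to vanish on $(0,1)$ with $f(0)=f(1)=1$), so justifying integrability by citing continuity on the compact interval is not quite right. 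This does not damage the proof: convexity alone gives boundedness (above by $\max\{f(a),f(b)\}$, below by comparison with chords through interior points) and Borel measurability, so all integrals exist; moreover, since a convex function can only jump upward at the endpoints, the supporting-line inequality $f(\lambda)\ge f(m)+c(\lambda-m)$ persists on all of $[a,b]$, and your boundary and equality arguments go through unchanged. In the paper's only application (Lemma 3.3) $f$ is assumed continuous anyway, so the issue is immaterial there.
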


\begin{lem}
\label{lem:jensen-state}
\textsl{ Let $\mathfrak{A}$ be a unital $C^*$-algebra and $\rho$ be a state on $\mathfrak{A}$. For a continuous convex function $f$ on a real interval $[a, b]$ and a self-adjoint operator $A$ in $\mathfrak{A}$ with spectrum in $[a, b]$, we have the following inequality, $$f(\rho(A)) \le \rho(f(A)).$$ Further if $\rho$ is faithful and $f$ is strictly convex, then equality holds if and only if $A$ is a scalar multiple of the identity.}
\end{lem}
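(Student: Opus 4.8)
The plan is to prove the inequality by exploiting the fact that a continuous convex function on $[a,b]$ is the pointwise supremum of the affine functions lying beneath it, and then to reduce the operator statement to positivity and linearity of $\rho$. First I would observe that since $A$ is self-adjoint with $\mathrm{sp}(A) \subseteq [a,b]$ we have $aI \le A \le bI$, so applying the positive unital functional $\rho$ gives $a \le \rho(A) \le b$; moreover $\rho(A)$ is real because a state sends self-adjoint elements to real numbers. Write $t_0 := \rho(A) \in [a,b]$.

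For the inequality itself, take any affine $\ell(t) = \alpha + \beta t$ with $\ell \le f$ on $[a,b]$. Continuous functional calculus gives $\ell(A) \le f(A)$, whence positivity and linearity of $\rho$ yield $\rho(f(A)) \ge \rho(\ell(A)) = \alpha + \beta\,\rho(A) = \ell(t_0)$, using $\rho(I)=1$. Taking the supremum over all such $\ell$ and invoking the representation $f(t_0) = \sup\{\ell(t_0) : \ell \text{ affine}, \ \ell \le f \text{ on } [a,b]\}$, which holds at every point of $[a,b]$ by continuity of $f$, I obtain $\rho(f(A)) \ge f(t_0) = f(\rho(A))$. Phrasing the argument through the whole family of affine minorants, rather than a single tangent line at $t_0$, is what lets it go through even when $t_0$ is an endpoint at which $f$ has infinite one-sided derivative.

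The equality case is where faithfulness and strict convexity enter. The converse is immediate: if $A = cI$ then $f(A) = f(c)I$ and both sides equal $f(c)$. For the forward direction, suppose equality holds but, toward a contradiction, $A$ is not a scalar, so $\lambda_{\min} := \min \mathrm{sp}(A) < \max \mathrm{sp}(A) =: \lambda_{\max}$. The step I expect to be the main obstacle is to locate $t_0$ strictly inside $[\lambda_{\min},\lambda_{\max}]$: if $\rho(A) = \lambda_{\max}$ then $\rho(\lambda_{\max} I - A) = 0$ with $\lambda_{\max} I - A \ge 0$, so faithfulness forces $A = \lambda_{\max} I$, a contradiction, and symmetrically for $\lambda_{\min}$. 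Hence $t_0 \in (\lambda_{\min},\lambda_{\max}) \subseteq (a,b)$, an interior point at which a genuine finite supporting line $\ell(t) = f(t_0) + \beta(t-t_0) \le f(t)$ exists with $\ell(t_0)=f(t_0)$. Setting $X := f(A) - \ell(A) \ge 0$, the equality hypothesis gives $\rho(X) = \rho(f(A)) - \rho(\ell(A)) = f(t_0) - \ell(t_0) = 0$, so faithfulness applied to $X = (X^{1/2})^{*} X^{1/2}$ yields $X = 0$, i.e. $(f-\ell)(A) = 0$. By functional calculus the continuous nonnegative function $g := f - \ell$ then vanishes on $\mathrm{sp}(A)$. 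Finally I would note that $g$ is again strictly convex in the paper's midpoint sense, since subtracting an affine function preserves the defining condition, with $g \ge 0$ and $g(t_0) = 0$; a short midpoint argument shows $t_0$ is the unique zero of $g$, contradicting the fact that $g$ vanishes on the at-least-two-point set $\mathrm{sp}(A)$. Therefore $A = t_0 I = \rho(A)\,I$, a scalar multiple of the identity.
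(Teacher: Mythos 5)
Your proof is correct, but it takes a genuinely different route from the paper's. The paper reduces everything to the scalar setting: it identifies the unital $C^*$-algebra generated by $A$ with $C(\sigma(A))$, uses the Riesz representation theorem to turn $\rho$ into a probability measure $\mu$ on $\sigma(A)$, invokes the measure-theoretic Jensen inequality (its Lemma 3.2) for the inequality, and handles equality by noting that strict convexity forces $\mu$ to be a Dirac measure while faithfulness forces $\mu$ to have full support, so $\sigma(A)$ is a singleton. You instead run the supporting-hyperplane proof of Jensen's inequality directly at the operator level: every affine minorant $\ell \le f$ gives $\rho(f(A)) \ge \ell(\rho(A))$ by functional calculus and positivity, and taking the supremum recovers $f(\rho(A))$ (your care with the endpoint case, where no finite supporting line need exist, is warranted and handled correctly). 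Your equality analysis is likewise operator-theoretic: faithfulness first pins $t_0 = \rho(A)$ strictly between $\min\sigma(A)$ and $\max\sigma(A)$, then kills $X = f(A) - \ell(A) \ge 0$ for the supporting line $\ell$ at $t_0$, so $f - \ell$ vanishes on $\sigma(A)$, while the midpoint form of strict convexity (exactly the paper's definition) allows $f - \ell$ at most one zero. What your approach buys is self-containedness: it needs neither the Riesz representation theorem nor the unproved measure-theoretic Jensen lemma, and it avoids the (implicit in the paper) fact that faithful states on $C(\sigma(A))$ correspond to fully supported measures. What the paper's approach buys is brevity and transparency once those standard tools are granted, since the whole statement becomes a one-line translation of the classical scalar inequality.
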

\begin{proof}
The unital $C^*$-algebra generated by $A$ is $*$-isomorphic to $C(\sigma(A))$ and henceforth referred to interchangeably. Note that the restriction of $\rho$ to $C(\sigma(A))$ is also a state, and faithful if $\rho$ is faithful. By the Riesz representation theorem, there is a probability measure $\mu$ on $\sigma(A)$ such that for any continuous function $f$ on $\sigma(A)$, we have that $\rho(f(A)) = \int_{\sigma(A)} f(\lambda) \; d\mu(\lambda) $. Note that on the compact subset $\sigma(A)$ of $[a, b]$, by Jensen's inequality, $$f (\rho(A)) = f(\int_{\sigma (A)} \lambda \; d\mu (\lambda) ) \le \int_{\sigma(A)} f(\lambda) \; d\mu(\lambda) = \rho(f(A)).$$
If $f$ is strictly convex, equality holds if and only if $\mu$ is a Dirac measure, say, supported on $\lambda ' \in [a, b]$. In addition if $\rho$ is a faithful state, then $\mu$ is a Dirac measure supported on $\{ \lambda ' \}$ if and only if $\sigma(A) = \{\lambda'\} \Leftrightarrow A = \lambda 'I.$
\end{proof}

\begin{lem}
\label{lem:det-trace-jensen}
\textsl{For a finite von Neumann algebra $\mathscr{R}$ with a tracial state $\tau$ and a positive operator $A$ in $\mathscr{R}$, we have the following inequality,
\begin{equation}
\Delta(A) \le \tau(A).
\end{equation}
If $A$ is a regular positive operator and $\tau$ is faithful, equality holds if and only if $A$ is a positive scalar multiple of the identity $I$.}
\end{lem}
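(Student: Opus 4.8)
The plan is to recognize the inequality $\Delta(A) \le \tau(A)$ as an instance of Jensen's inequality (in the form of Lemma \ref{lem:jensen-state}) applied to the strictly convex function $f(t) = -\log t$, supplemented by a limiting argument to pass from regular to arbitrary positive operators.

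First I would treat the case of a regular positive operator $A$. Since $A$ is regular and positive, its spectrum $\sigma(A)$ is a compact subset of $(0,\infty)$, say $\sigma(A) \subseteq [c,d]$ with $0 < c \le d$, so $f(t) = -\log t$ is a continuous, strictly convex function on $[c,d]$. For such $A$ the analytic extension of the Fuglede-Kadison determinant reduces to $\Delta(A) = \exp(\tau(\log A))$, because $(A^*A)^{1/2} = A$. Applying Lemma \ref{lem:jensen-state} to $f$, the state $\tau$, and the operator $A$ gives
$$-\log(\tau(A)) = f(\tau(A)) \le \tau(f(A)) = -\tau(\log A),$$
that is, $\tau(\log A) \le \log \tau(A)$. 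Exponentiating and using the monotonicity of $\exp$ yields $\Delta(A) = \exp(\tau(\log A)) \le \tau(A)$.

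Next I would remove the regularity hypothesis for the inequality itself. For an arbitrary positive $A$ and $\varepsilon > 0$, the operator $A + \varepsilon I$ is regular and positive, so the case just proved gives $\Delta(A + \varepsilon I) \le \tau(A + \varepsilon I) = \tau(A) + \varepsilon$. Letting $\varepsilon \to 0^{+}$ and invoking property (vi) of the Remark, namely $\lim_{\varepsilon \to 0^{+}} \Delta(A + \varepsilon I) = \Delta(A)$, I obtain $\Delta(A) \le \tau(A)$ in full generality.

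Finally, for the equality statement I would assume $A$ regular and $\tau$ faithful. Since $\exp$ is strictly monotone, the equality $\Delta(A) = \tau(A)$ is equivalent to equality $\tau(\log A) = \log \tau(A)$ in the Jensen step above. As $f(t) = -\log t$ is strictly convex and $\tau$ is faithful, the equality clause of Lemma \ref{lem:jensen-state} forces $A$ to be a scalar multiple of the identity; regularity and positivity of $A$ make the scalar strictly positive. Conversely, if $A = \lambda I$ with $\lambda > 0$, then a direct computation from the definition gives $\Delta(\lambda I) = \exp(\tau(\log \lambda \cdot I)) = \lambda = \tau(\lambda I)$, so equality holds. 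The only delicate points are keeping the convexity bookkeeping straight (working with $-\log$ rather than $\log$) and justifying the reduction $\Delta(A) = \exp(\tau(\log A))$ for regular positive $A$; the passage to singular operators is immediate from the continuity property (vi), and indeed this limiting step is precisely why the clean equality characterization is asserted only in the regular case.
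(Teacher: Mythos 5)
Your proposal is correct and follows essentially the same route as the paper's own proof: Jensen's inequality (Lemma \ref{lem:jensen-state}) applied to the strictly convex function $-\log$ on the spectrum of a regular positive $A$, exponentiation to obtain $\Delta(A) \le \tau(A)$ together with the equality characterization from faithfulness, and the perturbation $A + \varepsilon I$ with a limit as $\varepsilon \to 0^{+}$ to handle singular positive operators. Your explicit appeal to property (vi) of the Remark for the convergence $\Delta(A+\varepsilon I) \to \Delta(A)$ is a slightly more careful bookkeeping of the limiting step than the paper's wording, but the argument is the same.
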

\begin{proof}
We first prove the inequality for a regular positive operator $A$ in $\mathscr{R}$. Note that the spectrum of $A$ is contained in $[\|A^{-1}\|^{-1}, \|A\|]$. The function $-\log x$ defined on $[\|A^{-1}\|^{-1}, \|A\|]$ is strictly convex as the second derivative is $\frac{1}{x^2}$ which is strictly positive on $[\|A^{-1}\|^{-1}, \|A\|]$. From Lemma \ref{lem:jensen-state} for $-\log$, we have that $-\log \tau(A) \le \tau(- \log A) \Rightarrow \tau( \log A) \le \log \tau(A) \Rightarrow \Delta(A) = \exp (\tau( \log A)) \le \exp(\log \tau(A)) = \tau(A)$, and if $\tau$ is faithful, equality holds if and only if $A$ is a positive scalar multiple of the identity. 

We next prove the inequality for a singular positive operator $A$. For $\varepsilon > 0$, note that $A + \varepsilon I$ is a regular positive operator. Thus we have that $\Delta(A+\varepsilon I) \le \tau(A+\varepsilon I)$. Using the norm-continuity of $\tau$, and taking the limits as $\varepsilon \rightarrow 0^{+}$, we get that $\Delta(A) \le \tau(A)$.
\end{proof}

In this paragraph, we set up the notation to be used in the proof of Lemma \ref{lem:sylvester-rule}. Let $\mathscr{R}$ be a von Neumann algebra, acting on the Hilbert space $\mathscr{H}$, with a tracial state $\tau$. For the von Neumann algebra $M_2(\mathscr{R}) \cong \mathscr{R} \otimes M_2(\mathbb{C})$ (acting on $\mathscr{H} \oplus \mathscr{H}$), we are interested in the tracial state on $M_2(\mathscr{R})$ given by $\tau_2 = \tau \otimes \mathrm{tr}_2$ {\it i.e.} for an operator $A$ in $M_2(\mathscr{R}), \tau_2(A) = \frac{\tau(A_{11}) + \tau(A_{22})}{2}$ where $A_{ij} \in \mathscr{R}$ ($1 \le i, j \le 2$) denotes the $(i, j)^{\mathrm{th}}$ entry of $A$.  We denote the Fuglede-Kadison determinant on $M_2(\mathscr{R})$ corresponding to $\tau \otimes \mathrm{tr}_2$ by $\Delta_2$. For operators $A_1, A_2$ in $\mathscr{R}$, we define 
$$\mathrm{diag}(A_1, A_2) := \begin{bmatrix}
A_1 & 0\\
0 & A_2
\end{bmatrix} \in M_2(\mathscr{R}).$$ 

\begin{remark}
\label{rmrk:high-det}
For operators $A_1, A_2$ in $\mathscr{R}$, we have that $\Delta_2(\mathrm{diag}(A_1, A_2)) = \sqrt{\Delta(A_1) \cdot \Delta(A_2)}$.
\end{remark}

\begin{lem}[Sylvester's identity for $\Delta$]
\label{lem:sylvester-rule}
\textsl{Let $\mathscr{R}$ be a von Neumann algebra with a tracial state $\tau$. For operators $A, B$ in $\mathscr{R}$, we have that $\Delta(I+AB) = \Delta(I + BA).$
}
\end{lem}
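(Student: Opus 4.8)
The plan is to lift the identity to the $2\times 2$ amplification $M_2(\mathscr{R}) \cong \mathscr{R}\otimes M_2(\mathbb{C})$, equipped with the tracial state $\tau_2 = \tau\otimes\mathrm{tr}_2$ and its Fuglede--Kadison determinant $\Delta_2$, and to exploit the multiplicativity of $\Delta_2$ together with Remark \ref{rmrk:high-det}. The central object is the (regular) matrix $M = \begin{pmatrix} I & A \\ -B & I\end{pmatrix}$ in $M_2(\mathscr{R})$, which I would factor in two different ways by Gaussian elimination. Multiplying on the left by the upper unipotent $\begin{pmatrix} I & -A\\ 0 & I\end{pmatrix}$ clears the top-right corner and produces $\begin{pmatrix} I+AB & 0\\ -B & I\end{pmatrix}$, while multiplying on the left by the lower unipotent $\begin{pmatrix} I & 0\\ B & I\end{pmatrix}$ clears the bottom-left corner and produces $\begin{pmatrix} I & A\\ 0 & I+BA\end{pmatrix}$. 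Each of these block-triangular matrices splits as a product of a diagonal matrix and a unipotent matrix, e.g. $\begin{pmatrix} I+AB & 0\\ -B & I\end{pmatrix} = \begin{pmatrix} I+AB & 0\\ 0 & I\end{pmatrix}\begin{pmatrix} I & 0\\ -B & I\end{pmatrix}$.

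The heart of the argument --- and the step I expect to be the main obstacle --- is to show that every unipotent triangular matrix $N$ (such as $\begin{pmatrix} I & 0\\ B & I\end{pmatrix}$) satisfies $\Delta_2(N) = 1$, using only the listed properties of the determinant. Such an $N$ is regular, and the symmetry $S = \begin{pmatrix} I & 0\\ 0 & -I\end{pmatrix}$ (which satisfies $S = S^{-1}$) conjugates $N$ to its inverse, $S N S^{-1} = N^{-1}$. Multiplicativity (property (ii)) gives $\Delta_2(SNS^{-1}) = \Delta_2(N)$, while property (iii) gives $\Delta_2(N^{-1}) = \Delta_2(N)^{-1}$; hence $\Delta_2(N) = \Delta_2(N)^{-1}$, and since $\Delta_2$ takes positive real values this forces $\Delta_2(N) = 1$. (Alternatively, conjugating $N$ by $\mathrm{diag}(tI, I)$ replaces $B$ by $t^{-1}B$; letting $t\to\infty$ and invoking continuity (property (iv)) gives the same conclusion.) The identical reasoning applies to the upper unipotent matrices.

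With this in hand, the multiplicativity of $\Delta_2$ and Remark \ref{rmrk:high-det} finish the computation. Applying $\Delta_2$ to the first factorization gives $\Delta_2(M) = \Delta_2\begin{pmatrix} I+AB & 0\\ -B & I\end{pmatrix} = \sqrt{\Delta(I+AB)\,\Delta(I)} = \sqrt{\Delta(I+AB)}$, and applying it to the second gives $\Delta_2(M) = \sqrt{\Delta(I+BA)}$. Equating the two expressions for $\Delta_2(M)$ yields $\Delta(I+AB) = \Delta(I+BA)$, as desired. I note that because properties (ii) and (iii) hold with no invertibility assumption on $I\pm AB$ (through the analytic extension of $\Delta$), this argument covers the case of singular $I+AB$ automatically: the only regularity genuinely used is that of the unipotent factors and of $M$ itself, which is always guaranteed.
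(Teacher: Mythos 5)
Your proof is correct, and it follows the paper's overall strategy --- pass to $M_2(\mathscr{R})$ with $\tau_2 = \tau\otimes\mathrm{tr}_2$ and $\Delta_2$, do block Gaussian elimination on the very same matrix $M = \left[\begin{smallmatrix} I & A\\ -B & I\end{smallmatrix}\right]$, and finish with multiplicativity plus Remark \ref{rmrk:high-det} --- but the execution differs in one substantive point. The paper uses the \emph{two-sided} factorizations
\begin{equation*}
M \;=\; \begin{bmatrix} I & 0\\ -B & I\end{bmatrix}\begin{bmatrix} I & 0\\ 0 & I+BA\end{bmatrix}\begin{bmatrix} I & A\\ 0 & I\end{bmatrix}
\;=\; \begin{bmatrix} I & A\\ 0 & I\end{bmatrix}\begin{bmatrix} I+AB & 0\\ 0 & I\end{bmatrix}\begin{bmatrix} I & 0\\ -B & I\end{bmatrix},
\end{equation*}
in which \emph{exactly the same two} unipotent factors appear on both sides; applying $\Delta_2$, their determinants --- known only to be finite and strictly positive because the factors are regular --- cancel, and the paper never needs to evaluate them. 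Your one-sided eliminations $U'M = T_1$, $L'M = T_2$ produce \emph{different} unipotent factors in the two resulting identities, so no such cancellation is available; instead you must genuinely compute $\Delta_2(N)=1$ for every unipotent $N$, which you do with the conjugation trick $SNS^{-1} = N^{-1}$, $S = \mathrm{diag}(I,-I)$, forcing $\Delta_2(N) = \Delta_2(N)^{-1}$ and hence $\Delta_2(N)=1$. That is a different key lemma, and a nice one: it isolates a reusable fact (and your scaling-limit alternative via $\mathrm{diag}(tI,I)$ and property (iv) works too), at the cost of an extra step; the paper's symmetric factorization buys economy, since mere positivity of the unipotent determinants suffices. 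One inaccuracy in your closing remark: $M$ is \emph{not} always regular --- it is regular precisely when $I+BA$ (equivalently $I+AB$) is, as your own factorizations show; e.g. $\mathscr{R}=\mathbb{C}$, $A=1$, $B=-1$ gives a singular $M$. This is harmless, though, because your argument never actually uses regularity of $M$, only that of the unipotent factors, together with property (ii) applied to possibly singular products.
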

\begin{proof}
For a unital ring $R$ with multiplicative identity $1$ and an element $x$ in $M_2(R)$, we have that
$$\left[ \begin{array}{ccc}
1 &  0 \\
x & 1 \end{array} \right]
\left[ \begin{array}{ccc}
1 &  0 \\
-x & 1 \end{array} \right] = 
\left[ \begin{array}{ccc}
1 &  0 \\
0 & 1 \end{array} \right] ,
\left[ \begin{array}{ccc}
1 & x \\
0 & 1 \end{array} \right]
\left[ \begin{array}{ccc}
1 & -x \\
0 & 1 \end{array} \right] = 
\left[ \begin{array}{ccc}
1 &  0 \\
0 & 1 \end{array} \right]
 $$
Thus the operators $$\left[ \begin{array}{ccc}
I &  0 \\
-B & I \end{array} \right], 
\left[ \begin{array}{ccc}
I & A \\
0 & I\end{array} \right] \in M_2(\mathscr{R})$$ are regular and their Fuglede-Kadison determinant is strictly positive. Using the multiplicativity of $\Delta_2$, Remark \ref{rmrk:high-det} and the algebraic identity given below
\begin{align*}
\left[ \begin{array}{ccc}
I &  A \\
-B & I \end{array} \right]\ &=\ \left[ \begin{array}{ccc}
I &  0 \\
-B & I \end{array} \right]\ \left[ \begin{array}{ccc}
I & 0 \\
0 & I + BA \end{array} \right]\ \left[ \begin{array}{ccc}
I & A \\
0 & I \end{array} \right]\\
 &= \ \left[ \begin{array}{ccc}
I &  A \\
0 & I \end{array} \right]\ \left[ \begin{array}{ccc}
I+AB & 0 \\
0 & I \end{array} \right]\ \left[ \begin{array}{ccc}
I & 0 \\
-B & I \end{array} \right],
\end{align*}
we conclude that $\Delta(I + AB) = \Delta(I + BA)$.
\end{proof}

In Lemma \ref{lem:positive-two-tensor} below, we adapt Theorem 1.3.3 in \cite{bhatia-pdm} which involves positive-definite matrices to the context of positive operators on a Hilbert space by mimicking the algebraic trick used therein. 

\begin{lem}
\label{lem:positive-two-tensor}
\textsl{Let $A, C$ be positive operators in $\mathcal{B}(\mathscr{H})$ with $A$ being regular. Let $B$ be an operator in $\mathcal{B}(\mathscr{H})$. Then the self-adjoint operator, $$\mathbf{P} := 
\begin{bmatrix}
A & B\\
B^* & C
\end{bmatrix}$$ in $\mathcal{B}(\mathscr{H} \oplus \mathscr{H})$ is positive if and only if the Schur complement $C - B^*A^{-1}B$ is positive.
}
\end{lem}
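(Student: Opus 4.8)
The plan is to reduce the question to the positivity of a block-diagonal operator by means of a congruence transformation, precisely the algebraic trick used for matrices in \cite{bhatia-pdm}. Since $A$ is positive and regular, its inverse $A^{-1}$ exists, is positive, and in particular is self-adjoint, so that $(B^*A^{-1})^* = A^{-1}B$. This self-adjointness is what lets me introduce the \emph{invertible} lower-triangular operator
$$L := \begin{bmatrix} I & 0 \\ B^*A^{-1} & I \end{bmatrix} \in \mathcal{B}(\mathscr{H} \oplus \mathscr{H}),$$
whose inverse is obtained simply by negating the off-diagonal entry, together with the block-diagonal operator $D := \begin{bmatrix} A & 0 \\ 0 & C - B^*A^{-1}B \end{bmatrix}$.

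First I would record the factorization $\mathbf{P} = L\,D\,L^*$, verified by a two-step block multiplication: multiplying $L$ by $D$ produces $\begin{bmatrix} A & 0 \\ B^* & C - B^*A^{-1}B \end{bmatrix}$, and right-multiplying this by $L^* = \begin{bmatrix} I & A^{-1}B \\ 0 & I \end{bmatrix}$ restores the $(1,2)$-entry to $B$ and the $(2,2)$-entry to $C$, the Schur-complement terms cancelling in the latter. I emphasize that the self-adjointness of $A^{-1}$ is exactly what guarantees that the right factor is genuinely $L^*$, so that the factorization is of the congruence type $\mathbf{P} = L\,D\,L^*$ rather than $L\,D\,M$ for some unrelated $M$.

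Next I would invoke the fact that congruence by an invertible operator preserves positivity. Concretely, for every $\eta \in \mathscr{H} \oplus \mathscr{H}$ one has $\langle \mathbf{P}\eta, \eta\rangle = \langle L\,D\,L^*\eta, \eta\rangle = \langle D\,L^*\eta, L^*\eta\rangle$, and since $L^*$ is a bijection of $\mathscr{H} \oplus \mathscr{H}$, the vector $L^*\eta$ ranges over the whole space as $\eta$ does. Hence $\mathbf{P} \ge 0$ if and only if $D \ge 0$. Finally, a self-adjoint block-diagonal operator is positive if and only if each of its diagonal blocks is positive (test against vectors supported in a single summand of $\mathscr{H} \oplus \mathscr{H}$); since $A \ge 0$ by hypothesis, this reduces $D \ge 0$ to the positivity of the Schur complement $C - B^*A^{-1}B$, yielding both implications at once.

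Because the argument is a clean congruence reduction, I expect no serious obstacle. The only points requiring care are the self-adjointness of $A^{-1}$ (needed to secure $L^*$ as the correct right factor, and hence genuine congruence), and the elementary verification that congruence by an invertible operator, as well as positivity of a block-diagonal operator, behave exactly as in the finite-dimensional case once phrased in terms of $\langle \cdot\, , \cdot\rangle$ against arbitrary vectors of $\mathscr{H} \oplus \mathscr{H}$.
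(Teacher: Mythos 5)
Your proof is correct and follows essentially the same route as the paper: your factorization $\mathbf{P} = L\,D\,L^*$ is exactly the paper's congruence $\mathbf{X}\mathbf{P}\mathbf{X}^* = D$ with $\mathbf{X} = L^{-1}$, followed by the same reduction to block-diagonal positivity. The only difference is cosmetic (you write the congruence from the opposite side and spell out the block-diagonal step that the paper leaves implicit).
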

\begin{proof}
Consider the operator in $\mathcal{B}(\mathscr{H} \oplus \mathscr{H})$ given by,
$$ \mathbf{X} := \left[ \begin{array}{ccc}
I & 0 \\
-B^*A^{-1} & I \end{array} \right] .
$$
As $\mathbf{X}$ is regular, $\mathbf{P}$ is positive if and only if $\mathbf{X} \mathbf{P} \mathbf{X}^*$ is positive. A straightforward matrix computation shows that 
$$\mathbf{X} \mathbf{P} \mathbf{X}^* = \left[ \begin{array}{ccc}
A &  0 \\
0 & C - B^*A^{-1}B \end{array} \right] .$$
Thus, $\mathbf{P}$ is positive $\Leftrightarrow \mathbf{X} \mathbf{P} \mathbf{X}^*$ is positive $\Leftrightarrow C - B^*A^{-1}B$ is positive.
\end{proof}

We paraphrase Proposition 3.8 from \cite{jensen} below without proof.
\begin{thm}
\label{thm:jensen-unital-positive}
{\textsl Let $\mathfrak{A}, \mathfrak{B}$ be unital $C^*$-algebras, such that $\mathfrak{B}$ is abelian, and let $\Phi : \mathfrak{A} \rightarrow \mathfrak{B}$ be a unital positive map. Let $f$ be a real-valued continuous convex function defined on the interval $[a, b]$. For a self-adjoint operator $A$ of $\mathfrak{A}$ whose spectrum is contained in $[a, b]$, we have the following inequality:
\begin{equation}
f(\Phi(A)) \le \Phi(f(A)).
\end{equation}
}
\end{thm}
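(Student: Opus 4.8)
The plan is to exploit the commutativity of $\mathfrak{B}$ via Gelfand duality, reducing the asserted operator inequality to a family of scalar Jensen inequalities indexed by the characters of $\mathfrak{B}$, each of which is supplied by Lemma \ref{lem:jensen-state}.

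First I would record the elementary observation that $\Phi(A)$ is a self-adjoint element of $\mathfrak{B}$ whose spectrum lies in $[a,b]$, so that $f(\Phi(A))$ is well-defined. Indeed, since $A$ is self-adjoint with spectrum in $[a,b]$ we have $aI \le A \le bI$ in $\mathfrak{A}$, and applying the unital positive map $\Phi$ gives $aI \le \Phi(A) \le bI$ in $\mathfrak{B}$, whence $\sigma(\Phi(A)) \subseteq [a,b]$. Next, since $\mathfrak{B}$ is abelian and unital, its Gelfand transform identifies $\mathfrak{B}$ with $C(X)$, where $X$ is the compact Hausdorff character space of $\mathfrak{B}$; the characters $\omega \in X$ are exactly the point-evaluations, and the order on self-adjoint elements is detected pointwise, i.e. for self-adjoint $T$ one has $T \ge 0$ if and only if $\omega(T) \ge 0$ for every $\omega \in X$.

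I would then fix a character $\omega \in X$ and set $\rho := \omega \circ \Phi$. As the composition of the unital positive map $\Phi$ with a character (which is itself unital and positive), $\rho$ is a state on $\mathfrak{A}$. Applying Lemma \ref{lem:jensen-state} to $\rho$, the convex function $f$, and the self-adjoint operator $A$ yields $f(\rho(A)) \le \rho(f(A))$, that is, $f(\omega(\Phi(A))) \le \omega(\Phi(f(A)))$.

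The one point requiring care is rewriting the left-hand side as $\omega(f(\Phi(A)))$. Here I would use that $\omega$, restricted to the unital commutative $C^*$-subalgebra generated by the self-adjoint element $\Phi(A)$, is a $*$-homomorphism into $\mathbb{C}$ and therefore intertwines the continuous functional calculus: $\omega(g(\Phi(A))) = g(\omega(\Phi(A)))$ for every continuous $g$ on $\sigma(\Phi(A))$, in particular for $g = f$. Combining this with the previous display gives $\omega(f(\Phi(A))) = f(\omega(\Phi(A))) \le \omega(\Phi(f(A)))$ for every character $\omega \in X$. Since characters detect the order on the self-adjoint part of the abelian algebra $\mathfrak{B}$, this family of scalar inequalities is precisely the statement $f(\Phi(A)) \le \Phi(f(A))$. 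The main (and essentially only) obstacle is the functional-calculus identity $\omega(g(\Phi(A))) = g(\omega(\Phi(A)))$; everything else is a repackaging of Gelfand duality together with the already-established scalar Jensen inequality of Lemma \ref{lem:jensen-state}. Note that the hypothesis that $\mathfrak{B}$ be abelian is used exactly twice, and crucially: to guarantee that $\omega \circ \Phi$ is a genuine state (characters exist in abundance and separate points) and to convert the pointwise inequalities back into an operator inequality in $\mathfrak{B}$.
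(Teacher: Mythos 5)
Your proof is correct, but there is nothing in the paper to compare it against: the paper explicitly states Theorem \ref{thm:jensen-unital-positive} \emph{without proof}, quoting it as a paraphrase of Proposition 3.8 of \cite{jensen}. What your argument buys, therefore, is a self-contained derivation inside the paper's own toolkit: you reduce the operator inequality in the abelian algebra $\mathfrak{B}$ to a family of scalar Jensen inequalities by composing $\Phi$ with characters, and each scalar inequality is exactly the paper's Lemma \ref{lem:jensen-state} applied to the state $\omega \circ \Phi$. Every step checks out: positivity of $\Phi$ and $aI \le A \le bI$ give $\sigma(\Phi(A)) \subseteq [a,b]$, so $f(\Phi(A))$ is well-defined; a character $\omega$ of $\mathfrak{B}$ is a unital $*$-homomorphism to $\mathbb{C}$, hence $\omega \circ \Phi$ is a state on $\mathfrak{A}$ and $\omega$ intertwines the continuous functional calculus, $\omega(f(\Phi(A))) = f(\omega(\Phi(A)))$; and since $\mathfrak{B}$ is abelian, characters detect positivity of the self-adjoint element $\Phi(f(A)) - f(\Phi(A)$), which converts the pointwise inequalities back into the operator inequality. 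One small bookkeeping remark: the abelian hypothesis is not needed to make $\omega \circ \Phi$ a state (any state on $\mathfrak{B}$ would do for that); where it is genuinely needed is in guaranteeing that characters exist in abundance, are multiplicative (this is what drives the functional-calculus identity), and jointly determine the order on $\mathfrak{B}_{sa}$ -- in a noncommutative target algebra there may be no characters at all, and indeed the theorem fails there. This is essentially the classical argument, and it nicely ties the quoted result to the machinery the paper has already built, rather than leaving it as an external citation.
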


\section{Some Determinant Inequalities}
\label{sec:detineq1}
Throughout this section, $\mathscr{R}$ will denote a {\bf finite} von Neumann algebra acting on the Hilbert space $\mathscr{H}$ with identity $I$ and a faithful normal tracial state $\tau$, $\mathscr{S}$ will denote a von Neumann subalgebra of $\mathscr{R}$, and $\Phi$ will denote a $\tau$-preserving conditional expectation from $\mathscr{R}$ onto $\mathscr{S}$. Note that for $A$ in $\mathscr{R}$, if $\varepsilon I \le A$, then $\varepsilon I \le \Phi(A)$. As a consequence, if $A$ is a positive regular operator, $\Phi(A)$ is also positive and regular.

\begin{thm}
\label{thm:general-hadamard}
\textsl{For any regular positive operator $A$ in $\mathscr{R}$, we have that 
\begin{equation}
\label{eqn:hadamard}
\Delta(\Phi(A^{-1})^{-1}) \le \Delta(A) \le \Delta(\Phi(A)). 
\end{equation}
with equality on either side if and only if $\Phi(A) = A$ \textit{i.e} $A \in \mathscr{S}$. If $A$ is positive (but not necessarily regular), one still has the inequality on the right {\it i.e.}\ $\Delta(A) \le \Delta(\Phi(A)).$ } 
\end{thm}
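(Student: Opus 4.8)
The plan is to reduce the right-hand inequality $\Delta(A) \le \Delta(\Phi(A))$ for regular positive $A$ to the ``determinant $\le$ trace'' estimate of Lemma \ref{lem:det-trace-jensen} by means of a normalizing conjugation, then to deduce the left-hand inequality by feeding $A^{-1}$ into the right-hand one, and finally to handle the singular case by approximation with $A + \varepsilon I$.

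First I would set $B := \Phi(A)^{-\frac{1}{2}} A \, \Phi(A)^{-\frac{1}{2}}$. Since $A$ is regular positive, the operator $\Phi(A)$ is regular positive and lies in $\mathscr{S}$; as $\mathscr{S}$ is a von Neumann algebra it is closed under the continuous functional calculus, so $\Phi(A)^{-\frac{1}{2}} \in \mathscr{S}$. The module property (ii) of the conditional expectation then gives $\Phi(B) = \Phi(A)^{-\frac{1}{2}} \Phi(A) \Phi(A)^{-\frac{1}{2}} = I$. Using multiplicativity of $\Delta$ together with $\Delta(\Phi(A)^{-\frac{1}{2}}) = \Delta(\Phi(A))^{-\frac{1}{2}}$, I get $\Delta(B) = \Delta(A)\,\Delta(\Phi(A))^{-1}$; and since $\Phi$ is trace-preserving, $\tau(B) = \tau(\Phi(B)) = \tau(I) = 1$. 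Now Lemma \ref{lem:det-trace-jensen} applied to the regular positive operator $B$ yields $\Delta(B) \le \tau(B) = 1$, which is exactly $\Delta(A) \le \Delta(\Phi(A))$.

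For the equality analysis, $\Delta(A) = \Delta(\Phi(A))$ is equivalent to $\Delta(B) = 1 = \tau(B)$; by the equality clause of Lemma \ref{lem:det-trace-jensen} (here $\tau$ is faithful and $B$ is regular) this forces $B = cI$ for a positive scalar $c$, and then $\Phi(B) = I$ pins $c = 1$, so $B = I$, i.e.\ $A = \Phi(A)$. The converse is immediate. For the left-hand inequality I would apply the right-hand inequality to the regular positive operator $A^{-1}$: this gives $\Delta(A^{-1}) \le \Delta(\Phi(A^{-1}))$, and rewriting via $\Delta(A^{-1}) = \Delta(A)^{-1}$ and $\Delta(\Phi(A^{-1}))^{-1} = \Delta(\Phi(A^{-1})^{-1})$ produces $\Delta(\Phi(A^{-1})^{-1}) \le \Delta(A)$, with equality if and only if $A^{-1} \in \mathscr{S}$, i.e.\ if and only if $A \in \mathscr{S}$.

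Finally, for positive but possibly singular $A$, I would apply the already-established right-hand inequality to the regular operator $A + \varepsilon I$, obtaining $\Delta(A+\varepsilon I) \le \Delta(\Phi(A)+\varepsilon I)$ (using $\Phi(A + \varepsilon I) = \Phi(A) + \varepsilon I$), and then let $\varepsilon \to 0^{+}$, invoking property (vi) of $\Delta$ on each side. The step I expect to require the most care is the choice of the normalizing conjugation and the verification that $\Phi(A)^{-\frac{1}{2}}$ stays inside $\mathscr{S}$ so that the module identity $\Phi(B) = I$ holds; everything after that is bookkeeping with multiplicativity and trace-preservation. A more ``conceptual'' route would instead apply Jensen's operator inequality to the operator convex function $-\log$ to obtain $\Phi(\log A) \le \log \Phi(A)$ and then take $\tau$, but extracting the sharp equality condition is cleaner through the conjugation argument above.
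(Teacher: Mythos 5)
Your proposal is correct and follows essentially the same route as the paper's own proof: the same normalizing conjugation $B = \Phi(A)^{-\frac{1}{2}} A \Phi(A)^{-\frac{1}{2}}$, the same application of Lemma \ref{lem:det-trace-jensen} with its equality clause, the same substitution $A \mapsto A^{-1}$ for the left-hand inequality, and the same $A + \varepsilon I$ approximation for singular $A$. The only cosmetic difference is that you compute $\tau(B) = 1$ via $\Phi(B) = I$ and trace-preservation, whereas the paper uses cyclicity of $\tau$ together with the module property inside the trace; these are equivalent.
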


\begin{proof}
Let $A$ be a regular positive operator in $\mathscr{R}$. As $\Phi(A)^{-1}$ is in $\mathscr{S}$, using Lemma \ref{lem:det-trace-jensen} for the regular positive operator $\Phi(A)^{-\frac{1}{2}}A\Phi(A)^{-\frac{1}{2}}$, and keeping in mind the trace-preserving nature of $\Phi$, we note that,
\begin{align*}
 \Delta(\Phi(A)^{-\frac{1}{2}}A\Phi(A)^{-\frac{1}{2}}) & \le \tau(\Phi(A)^{-\frac{1}{2}}A\Phi(A)^{-\frac{1}{2}}) = \tau(A \Phi(A)^{-1}) \\
  & =\tau(\Phi(A \Phi(A)^{-1})) = \tau(\Phi(A)\Phi(A)^{-1}) \\
  & = \tau(I) = 1,
\end{align*}
with equality if and only if $\Phi(A)^{-\frac{1}{2}}A\Phi(A)^{-\frac{1}{2}} = I \Leftrightarrow \Phi(A) = A$.

Using the multiplicativity of $\Delta$, we prove the desired inequality below.
\begin{align}
\label{eqn:general-hadamard-2}
  &\phantom{{}\Rightarrow{}} \Delta(\Phi(A)^{-\frac{1}{2}} A\Phi(A)^{-\frac{1}{2}}) \le 1 \notag \\
  & \Rightarrow \Delta(\Phi(A))^{-\frac{1}{2}}\Delta(A)\Delta(\Phi(A))^{-\frac{1}{2}} \le 1 \notag\\
  & \Rightarrow \Delta(A) \le \Delta(\Phi(A))^{\frac{1}{2}}\Delta(\Phi(A))^{\frac{1}{2}} = \Delta(\Phi(A)),
\end{align}
with equality if and only if $\Phi(A) = A$. Using the inequality just proved for the regular positive operator $A^{-1}$, we have $\Delta(A^{-1}) \le \Delta(\Phi(A^{-1})) \Leftrightarrow \Delta(\Phi(A^{-1})^{-1}) \le \Delta(A)$, with equality if and only if $\Phi(A^{-1}) = A^{-1}$. Note that $\Phi(A^{-1}) = A^{-1} \Leftrightarrow A^{-1} \in \mathscr{S} \Leftrightarrow A \in \mathscr{S} \Leftrightarrow \Phi(A) = A$. 

Let $\varepsilon > 0.$ If $A$ is positive but not necessarily regular, applying inequality (\ref{eqn:general-hadamard-2}) to the regular operator $A+\varepsilon I$ yields the following inequality $$\Delta(A+\varepsilon I) \le \Delta(\Phi(A + \varepsilon I)) = \Delta(\Phi(A) + \varepsilon I).$$
Taking the limit as $\varepsilon \rightarrow 0^{+}$, we see that $\Delta(A) \le \Delta(\Phi(A)).$

\end{proof}

\begin{remark}
\label{rmrk:general-hadamard}
In the proof of Theorem \ref{thm:general-hadamard}, we used the faithfulness of the tracial state $\tau$ to invoke the equality condition in Lemma \ref{lem:det-trace-jensen}. But inequality (\ref{eqn:hadamard}) still holds in any von Neumann algebra with a tracial state $\tau$ which may be neither faithful nor normal. This observation is precisely the generalization of Hadamard's inequality by Arveson as in Corollary 4.3.4 in \cite{general-hadamard}.
\end{remark}

\begin{cor}
\label{cor:hadamard-cor1}
\textsl{For a regular positive operator $A$ in $\mathscr{R}$, and a positive operator $B$ in $\mathscr{S}$, the following inequality holds :
\begin{equation}
\label{eqn:hadamard-cor1}
\frac{\Delta(A + B)}{\Delta(A)} \le \frac{\Delta(\Phi(A^{-1})^{-1} + B)}{\Delta(\Phi(A^{-1})^{-1})},
\end{equation}
with equality if and only if $B^{\frac{1}{2}}A^{-1}B^{\frac{1}{2}} \in \mathscr{S}$. In particular, if $B$ is regular, equality holds in (\ref{eqn:hadamard-cor1}) if and only if $A \in \mathscr{S}$.}
\end{cor}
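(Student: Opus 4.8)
The plan is to reduce both sides of (\ref{eqn:hadamard-cor1}) to a single instance of the right-hand inequality in Theorem \ref{thm:general-hadamard}, exploiting the multiplicativity of $\Delta$, Sylvester's identity (Lemma \ref{lem:sylvester-rule}), and the $\mathscr{S}$-bimodule property (ii) of the conditional expectation $\Phi$. First I would rewrite the left-hand ratio: since $A + B = A(I + A^{-1}B)$ and $\Delta$ is multiplicative, $\frac{\Delta(A+B)}{\Delta(A)} = \Delta(I + A^{-1}B)$. Writing $B = B^{\frac{1}{2}} B^{\frac{1}{2}}$ and applying Lemma \ref{lem:sylvester-rule} with the factorization $A^{-1}B = (A^{-1}B^{\frac{1}{2}})(B^{\frac{1}{2}})$ gives
$$\frac{\Delta(A+B)}{\Delta(A)} = \Delta(I + A^{-1}B) = \Delta(I + B^{\frac{1}{2}}A^{-1}B^{\frac{1}{2}}).$$
The identical computation applied to the regular positive operator $\Phi(A^{-1})^{-1} \in \mathscr{S}$ (regular and positive by the remark opening this section) yields
$$\frac{\Delta(\Phi(A^{-1})^{-1} + B)}{\Delta(\Phi(A^{-1})^{-1})} = \Delta(I + B^{\frac{1}{2}}\Phi(A^{-1})B^{\frac{1}{2}}).$$

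Next I would bring in the conditional expectation. As $B \in \mathscr{S}$, we also have $B^{\frac{1}{2}} \in \mathscr{S}$, so the bimodule property gives $\Phi(B^{\frac{1}{2}}A^{-1}B^{\frac{1}{2}}) = B^{\frac{1}{2}}\Phi(A^{-1})B^{\frac{1}{2}}$, and since $\Phi(I) = I$,
$$\Delta(I + B^{\frac{1}{2}}\Phi(A^{-1})B^{\frac{1}{2}}) = \Delta\bigl(\Phi(I + B^{\frac{1}{2}}A^{-1}B^{\frac{1}{2}})\bigr).$$
Setting $C := I + B^{\frac{1}{2}}A^{-1}B^{\frac{1}{2}}$, which satisfies $C \ge I$ and is therefore regular and positive, the desired inequality (\ref{eqn:hadamard-cor1}) becomes exactly $\Delta(C) \le \Delta(\Phi(C))$, which is the right-hand inequality of Theorem \ref{thm:general-hadamard}.

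Finally, the equality analysis follows directly. Theorem \ref{thm:general-hadamard} gives equality if and only if $\Phi(C) = C$, i.e. $C \in \mathscr{S}$, which is equivalent to $B^{\frac{1}{2}}A^{-1}B^{\frac{1}{2}} \in \mathscr{S}$. For the ``in particular'' clause, if $B$ is regular then $B^{\frac{1}{2}}$ and $B^{-\frac{1}{2}}$ lie in $\mathscr{S}$ by functional calculus, so $B^{\frac{1}{2}}A^{-1}B^{\frac{1}{2}} \in \mathscr{S}$ if and only if $A^{-1} \in \mathscr{S}$, which holds if and only if $A \in \mathscr{S}$.

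I do not anticipate a serious obstacle here; the entire content lies in spotting the Sylvester symmetrization that converts $A^{-1}B$ into the sandwiched form $B^{\frac{1}{2}}A^{-1}B^{\frac{1}{2}}$, on which $\Phi$ acts by the module property. The only points requiring care are verifying that $C$ is genuinely regular so that Theorem \ref{thm:general-hadamard} applies, and noting that the functional-calculus elements $B^{\frac{1}{2}}, B^{-\frac{1}{2}}$ indeed belong to the von Neumann algebra $\mathscr{S}$.
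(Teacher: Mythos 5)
Your proposal is correct and follows essentially the same route as the paper's own proof: both rewrite the two ratios via multiplicativity of $\Delta$ and Sylvester's identity as $\Delta(I+B^{\frac{1}{2}}A^{-1}B^{\frac{1}{2}})$ and $\Delta(I+B^{\frac{1}{2}}\Phi(A^{-1})B^{\frac{1}{2}})$, use the bimodule property to identify the latter with $\Delta(\Phi(I+B^{\frac{1}{2}}A^{-1}B^{\frac{1}{2}}))$, and then invoke Theorem \ref{thm:general-hadamard} together with its equality condition. The equality analysis, including the reduction to $A \in \mathscr{S}$ when $B$ is regular, matches the paper's argument step for step.
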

\begin{proof}
 Using the multiplicativity of $\Delta$ and Lemma \ref{lem:sylvester-rule}, we can rewrite both sides of the inequality in the following manner :
\begin{align*}
\frac{\Delta(A + B)}{\Delta(A)} &= \Delta(I + A^{-1}B) = \Delta(I+B^{\frac{1}{2}}A^{-1}B^{\frac{1}{2}}),\\
\frac{\Delta(\Phi(A^{-1})^{-1} + B)}{\Delta(\Phi(A^{-1})^{-1})}&= \Delta(I + \Phi(A^{-1})B) = \Delta (I+B^{\frac{1}{2}}\Phi(A^{-1})B^{\frac{1}{2}}).
\end{align*}
Note that $X := B^{\frac{1}{2}}A^{-1}B^{\frac{1}{2}}$ is a positive operator and thus $I+X$ is a regular positive operator. As $B^{\frac{1}{2}}$ is in $\mathscr{S}$, $\Phi(X) = B^{\frac{1}{2}}\Phi(A^{-1})B^{\frac{1}{2}}$. From Theorem \ref{thm:general-hadamard}, we have that $$\Delta(I+X) \le \Delta( \Phi(I+X)) = \Delta(I + \Phi(X))$$ with equality if and only if $I + X = \Phi(I+X) \Leftrightarrow \Phi(X) = X \Leftrightarrow B^{\frac{1}{2}}A^{-1}B^{\frac{1}{2}} \in \mathscr{S}$. If $B$ is regular, $B^{\frac{1}{2}}A^{-1}B^{\frac{1}{2}} \in \mathscr{S} \Leftrightarrow A^{-1} \in \mathscr{S} \Leftrightarrow A \in \mathscr{S}.$

\end{proof}

\begin{thm}
\label{thm:ineq-square-inverse}
{\textsl For a self-adjoint operator $A$ in $\mathscr{R}$, we have the following inequality :
\begin{equation}
\label{eqn:ineq-square}
\Phi(A)^2 \le \Phi(A^2).
\end{equation}
If $A$ is positive and regular, we have that 
\begin{equation}
\label{eqn:ineq-inverse}
\Phi(A)^{-1} \le \Phi(A^{-1}).
\end{equation}
Further, in inequalities (\ref{eqn:ineq-square}) and (\ref{eqn:ineq-inverse}), equality holds if and only if $\Phi(A) = A$ {\it i.e.}\ $A \in \mathscr{S}$ (for the $A$ under consideration). }
\end{thm}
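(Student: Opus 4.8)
The plan is to handle the two inequalities by separate elementary arguments — the square inequality (\ref{eqn:ineq-square}) through the bimodule property of $\Phi$, and the inverse inequality (\ref{eqn:ineq-inverse}) through a Schur-complement computation together with $2$-positivity — and then to extract the two equality conditions, the second of which is the delicate point, by feeding the operator inequalities into the monotonicity of $\Delta$ and into Theorem \ref{thm:general-hadamard}.

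First, for (\ref{eqn:ineq-square}) I would set $C := A - \Phi(A)$, which is self-adjoint in $\mathscr{R}$, and note $\Phi(A) \in \mathscr{S}$. Expanding $C^2 = A^2 - A\Phi(A) - \Phi(A)A + \Phi(A)^2$ and repeatedly using the property $\Phi(S_1 R S_2) = S_1 \Phi(R) S_2$ (with $S_1, S_2 \in \mathscr{S}$) to pull $\Phi(A)$ past $\Phi$, I expect to obtain the identity
\begin{equation*}
\Phi(A^2) - \Phi(A)^2 = \Phi\bigl((A - \Phi(A))^2\bigr) = \Phi(C^2).
\end{equation*}
Since $C^2 \ge 0$ and $\Phi$ is positive, the right-hand side is positive, which is (\ref{eqn:ineq-square}). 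For the equality clause, if $\Phi(C^2) = 0$ then applying $\tau$ and using that $\Phi$ is $\tau$-preserving gives $\tau(C^*C) = \tau(C^2) = \tau(\Phi(C^2)) = 0$; faithfulness of $\tau$ then forces $C = 0$, i.e.\ $\Phi(A) = A$. The converse is immediate, since $A \in \mathscr{S}$ gives $A^2 \in \mathscr{S}$ and hence $\Phi(A^2) = A^2 = \Phi(A)^2$.

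Next, for (\ref{eqn:ineq-inverse}) with $A$ positive and regular, I would start from the block operator $\begin{bmatrix} A & I \\ I & A^{-1} \end{bmatrix} \in M_2(\mathscr{R})$, whose Schur complement $A^{-1} - I \cdot A^{-1} \cdot I = 0$ is positive, so it is a positive operator by Lemma \ref{lem:positive-two-tensor}. Applying $\Phi \otimes I_2$, which is positive because $\Phi$ is completely positive by Theorem \ref{thm:cond-exp-comp-positive}, and using $\Phi(I) = I$, yields $\begin{bmatrix} \Phi(A) & I \\ I & \Phi(A^{-1}) \end{bmatrix} \ge 0$. As $\Phi(A)$ is regular (because $A$ is), a second application of Lemma \ref{lem:positive-two-tensor} shows its Schur complement $\Phi(A^{-1}) - \Phi(A)^{-1}$ is positive, which is exactly (\ref{eqn:ineq-inverse}).

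The main obstacle is the equality condition for (\ref{eqn:ineq-inverse}), where the variance identity of the square case is unavailable; here I would invoke the determinant. Both $\Phi(A)^{-1}$ and $\Phi(A^{-1})$ are regular positive with $\Phi(A)^{-1} \le \Phi(A^{-1})$, so by property (v) of $\Delta$, equality of the operators is equivalent to $\Delta(\Phi(A)^{-1}) = \Delta(\Phi(A^{-1}))$, that is, $\Delta(\Phi(A))^{-1} = \Delta(\Phi(A^{-1}))$. On the other hand, Theorem \ref{thm:general-hadamard} applied to $A$ and to $A^{-1}$ gives the chain $\Delta(\Phi(A))^{-1} \le \Delta(A)^{-1} = \Delta(A^{-1}) \le \Delta(\Phi(A^{-1}))$. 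Equality in (\ref{eqn:ineq-inverse}) collapses the two ends of this chain, forcing $\Delta(A^{-1}) = \Delta(\Phi(A^{-1}))$, whereupon the equality clause of Theorem \ref{thm:general-hadamard} yields $A^{-1} \in \mathscr{S}$, equivalently $A \in \mathscr{S}$. The converse is immediate. Since the equality clause for (\ref{eqn:ineq-square}) was settled above, combining the two cases completes the proof.
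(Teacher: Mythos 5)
Your proposal is correct and follows essentially the same route as the paper: the variance identity $\Phi(A^2) - \Phi(A)^2 = \Phi\bigl((A-\Phi(A))^2\bigr)$ together with faithfulness of $\tau$ for (\ref{eqn:ineq-square}), the $2$-positivity/Schur-complement argument via Theorem \ref{thm:cond-exp-comp-positive} and Lemma \ref{lem:positive-two-tensor} for (\ref{eqn:ineq-inverse}), and the collapsing determinant chain built from Theorem \ref{thm:general-hadamard} for the delicate equality case of (\ref{eqn:ineq-inverse}). The only cosmetic difference is that you invoke the equality clause of Theorem \ref{thm:general-hadamard} at the link $\Delta(A^{-1}) = \Delta(\Phi(A^{-1}))$ rather than at $\Delta(A) = \Delta(\Phi(A))$ as the paper does, which is immaterial since the collapsed chain forces equality at every link.
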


\begin{proof}
As $\Phi$ is a positive map, note that $\Phi(A)$ is self-adjoint and hence, so is $\Phi(A)-A$. We have $(\Phi(A) - A)^2 \ge 0$ and thus,  $$0 \le \Phi((\Phi(A) - A)^2) = \Phi(\Phi(A)^2 - \Phi(A)A - A\Phi(A) + A^2) = \Phi(A^2) - \Phi(A)^2.$$
Above we used the fact that $\Phi(A\Phi(A)) = \Phi(A) \Phi(A) = \Phi(\Phi(A)A)$. This proves the inequality (\ref{eqn:ineq-square}). In this case, equality holds if and only if $ \Phi((\Phi(A)-A)^2)= 0$. From the faithfulness and positivity of the tracial state, and trace-preserving nature of the conditional expectation, we get that equality holds if and only if $\tau(\Phi(\Phi(A)-A)^2)) = \tau((\Phi(A)-A)^2)) = 0 \Leftrightarrow (\Phi(A)-A)^2 = 0 \Leftrightarrow \Phi(A) = A.$ This completes the proof of inequality (\ref{eqn:ineq-square}).

We next prove inequality (\ref{eqn:ineq-inverse}). From Theorem \ref{thm:cond-exp-comp-positive}, as $\Phi$ is a completely positive map, the map $\Phi \otimes I_2 : \mathscr{R} \otimes M_2(\mathbb{C}) \rightarrow \mathscr{S} \otimes M_2(\mathbb{C})$ is a positive map. Applying $\Phi \otimes I_2$ to the positive operator, 
$$\begin{bmatrix} 
  A  & I\\ 
  I & A^{-1} 
\end{bmatrix} \in \mathscr{R} \otimes M_2(\mathbb{C}),$$ we conclude that $$\begin{bmatrix} 
  \Phi(A) & I\\ 
  I & \Phi(A^{-1}) 
\end{bmatrix} \in \mathscr{S} \otimes M_2(\mathbb{C})$$ is a positive operator. Using Lemma \ref{lem:positive-two-tensor}, we conclude that $\Phi(A)^{-1} \le \Phi(A^{-1})$.

We then investigate conditions under which equality holds. If $\Phi(A) = A$ for a regular positive operator $A$ in $\mathscr{R}$, we have that $A \in \mathscr{S} \Rightarrow A^{-1} \in \mathscr{S} \Rightarrow \Phi(A^{-1}) = A^{-1} = \Phi(A)^{-1}.$ Conversely, if $\Phi(A^{-1})=\Phi(A)^{-1}$, using (\ref{eqn:hadamard}), we see that, $$\frac{1}{\Delta(\Phi(A))} \le \frac{1}{\Delta(A)} = \Delta(A^{-1}) \le \Delta(\Phi(A^{-1})) = \Delta(\Phi(A)^{-1}) = \frac{1}{\Delta(\Phi(A))}.$$
Thus $\Delta(A)^{-1} = \Delta(\Phi(A))^{-1} \Rightarrow \Delta(\Phi(A)) = \Delta(A).$ From the equality condition in Theorem \ref{thm:general-hadamard}, we conclude that $\Phi(A) = A$.
\end{proof}

\begin{cor}
\label{cor:mon-conv}
\textsl{For a positive operator $A$ in $\mathscr{R}$ and $\lambda > 0$, the following inequalities hold:
\begin{itemize}
\item[(i)] $\Phi(A(\lambda I + A)^{-1}) \le \Phi(A)(\lambda I + \Phi(A))^{-1},$
\item[(ii)] $\Phi(A)^2(\lambda I + \Phi(A))^{-1} \le \Phi(A^2(\lambda I+A)^{-1})$
\end{itemize}
In both inequalities, equality holds if and only if $\Phi(A) = A$ {\it i.e.}\ $A \in \mathscr{S}$.}
\end{cor}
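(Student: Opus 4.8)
The plan is to reduce both inequalities to a single application of the inverse inequality (\ref{eqn:ineq-inverse}) in Theorem \ref{thm:ineq-square-inverse}. The guiding principle is that $x \mapsto \frac{x}{\lambda + x}$ is operator monotone (hence operator concave) while $x \mapsto \frac{x^2}{\lambda + x}$ is operator convex on $[0,\infty)$, so (i) and (ii) are the Jensen-type inequalities $\Phi(f(A)) \le f(\Phi(A))$ and $g(\Phi(A)) \le \Phi(g(A))$ for these two functions. Rather than invoke a general operator Jensen inequality, I would exploit the scalar identities $\frac{x}{\lambda+x} = 1 - \frac{\lambda}{\lambda+x}$ and $\frac{x^2}{\lambda+x} = x - \lambda + \frac{\lambda^2}{\lambda+x}$, which hold as operator identities by the continuous functional calculus, since on either side every term is a function of the single operator $A$ (or $\Phi(A)$) and the scalar identities are valid on the spectrum.

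First I would set $B := \lambda I + A$. Since $\lambda > 0$ and $A \ge 0$, we have $B \ge \lambda I$, so $B$ is a regular positive operator, and by linearity $\Phi(B) = \lambda I + \Phi(A)$. Applying the first scalar identity to both sides of (i) and using linearity of $\Phi$, inequality (i) is seen to be equivalent to $(\lambda I + \Phi(A))^{-1} \le \Phi((\lambda I + A)^{-1})$, that is, $\Phi(B)^{-1} \le \Phi(B^{-1})$. Similarly, applying the second scalar identity together with linearity of $\Phi$ to (ii), the common summand $\Phi(A) - \lambda I$ cancels from both sides, and after dividing by $\lambda^2 > 0$ one finds that (ii) is equivalent to the very same inequality $\Phi(B)^{-1} \le \Phi(B^{-1})$. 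Thus both (i) and (ii) are nothing but inequality (\ref{eqn:ineq-inverse}) of Theorem \ref{thm:ineq-square-inverse} applied to the regular positive operator $B$.

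Finally, for the equality conditions, Theorem \ref{thm:ineq-square-inverse} tells us that equality in $\Phi(B)^{-1} \le \Phi(B^{-1})$ holds if and only if $\Phi(B) = B$; and $\Phi(\lambda I + A) = \lambda I + A$ is equivalent to $\Phi(A) = A$, i.e. $A \in \mathscr{S}$. Since each reduction in the previous paragraph is a genuine equivalence of operator inequalities (obtained by adding a fixed self-adjoint operator and scaling by a positive constant), equality in (i) or (ii) holds if and only if $\Phi(A) = A$. The only point worth flagging is that $B = \lambda I + A$ is automatically regular whether or not $A$ is, which is precisely why the corollary is valid for every positive $A$ even though (\ref{eqn:ineq-inverse}) requires a regular operator; beyond spotting the correct algebraic rewriting, I expect no genuine obstacle.
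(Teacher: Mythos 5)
Your proposal is correct and follows essentially the same route as the paper: the paper's proof likewise invokes the algebraic identities $X(\lambda I + X)^{-1} = I - \lambda(\lambda I + X)^{-1}$ and $X^2(\lambda I + X)^{-1} = X - \lambda I + \lambda^2(\lambda I + X)^{-1}$ to reduce both inequalities to the inverse inequality (\ref{eqn:ineq-inverse}) of Theorem \ref{thm:ineq-square-inverse} applied to the regular operator $\lambda I + A$. Your write-up merely spells out the cancellation and the transfer of the equality condition, which the paper leaves implicit.
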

\begin{proof}
These inequalities follow from Theorem \ref{thm:ineq-square-inverse}, after noting the following algebraic identitites : $$X(\lambda I + X)^{-1} = I - \lambda I (\lambda I + X)^{-1},$$ $$X^2(\lambda I + X)^{-1} = X - \lambda I + \lambda ^2 I (\lambda I + X)^{-1}.$$
\end{proof}

\begin{thm}
\label{thm:ineq-op-mon}
\textsl{Let $f$ be an operator monotone function defined on the interval $[0, \infty)$. Then for a regular positive operator $A$, we have the following inequality : 
\begin{equation}
\label{eqn:ineq-op-mon}
 \Phi(f(A)) \le f(\Phi(A)),
\end{equation}
with equality if and only if either $f$ is linear with positive slope or $\Phi(A) = A$.}
\end{thm}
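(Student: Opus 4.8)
The plan is to reduce the inequality to the special resolvent functions $g_\lambda(t) = t(\lambda+t)^{-1}$ already handled in Corollary \ref{cor:mon-conv}(i), by feeding $f$ through its integral representation. Since $f$ is operator monotone on $[0,\infty)$, Theorem \ref{thm:int-op-mon} furnishes real numbers $a$, $b \ge 0$ and a finite positive measure $\mu$ on $(0,\infty)$ with
$$f(t) = a + bt + \int_0^\infty \frac{(\lambda+1)t}{\lambda + t} \, d\mu(\lambda) = a + bt + \int_0^\infty (\lambda+1)\, \frac{t}{\lambda+t}\, d\mu(\lambda).$$
Because $A$ is regular and positive, its spectrum lies in a compact subinterval $[\varepsilon, \|A\|]$ of $(0,\infty)$, and the remark opening this section shows $\Phi(A)$ is regular positive with spectrum in the same interval. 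On this interval the integrand $t \mapsto (\lambda+1)t(\lambda+t)^{-1}$ is continuous in $t$, continuous in $\lambda$, and uniformly bounded, so the scalar integral converges, and, via the continuous functional calculus (approximating by Riemann sums that converge uniformly on the spectrum), I obtain the norm-convergent operator identities
$$f(A) = aI + bA + \int_0^\infty (\lambda+1)\, A(\lambda I + A)^{-1}\, d\mu(\lambda),$$
$$f(\Phi(A)) = aI + b\Phi(A) + \int_0^\infty (\lambda+1)\, \Phi(A)(\lambda I + \Phi(A))^{-1}\, d\mu(\lambda).$$

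Next I apply $\Phi$ to the first identity. Since $\Phi$ is linear and norm-continuous (a conditional expectation is a contraction), it commutes with the norm-convergent integral, giving
$$\Phi(f(A)) = aI + b\Phi(A) + \int_0^\infty (\lambda+1)\, \Phi\!\big(A(\lambda I + A)^{-1}\big)\, d\mu(\lambda).$$
The constant and linear terms now match those in $f(\Phi(A))$ exactly, so the comparison reduces to the integrands. For each fixed $\lambda > 0$, Corollary \ref{cor:mon-conv}(i) yields $\Phi(A(\lambda I + A)^{-1}) \le \Phi(A)(\lambda I + \Phi(A))^{-1}$; multiplying by the positive scalar $(\lambda + 1)$ and integrating against the positive measure $\mu$ preserves the operator inequality, which is precisely $\Phi(f(A)) \le f(\Phi(A))$.

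Finally I extract the equality condition. Writing $D_\lambda := \Phi(A)(\lambda I + \Phi(A))^{-1} - \Phi(A(\lambda I + A)^{-1}) \ge 0$, the deficit is $f(\Phi(A)) - \Phi(f(A)) = \int_0^\infty (\lambda+1) D_\lambda \, d\mu(\lambda)$. Applying the faithful normal tracial state $\tau$ turns this into the scalar integral $\int_0^\infty (\lambda+1)\tau(D_\lambda)\, d\mu(\lambda)$ of a non-negative integrand; it vanishes if and only if $\tau(D_\lambda) = 0$ for $\mu$-almost every $\lambda$, and by faithfulness this forces $D_\lambda = 0$ for $\mu$-almost every $\lambda$. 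Since Corollary \ref{cor:mon-conv}(i) identifies $D_\lambda = 0$ with $\Phi(A) = A$ for each $\lambda$, equality in (\ref{eqn:ineq-op-mon}) holds precisely when either $\mu = 0$ --- i.e.\ $f(t) = a + bt$ is linear, in which case both sides collapse to $aI + b\Phi(A)$ for every $A$ --- or $\Phi(A) = A$. I expect the only delicate points to be the justification that $\Phi$ passes through the operator-valued integral and the measure-theoretic step converting the vanishing of the traced deficit into the pointwise condition $\Phi(A) = A$; both are routine once the integral representation is in place, so the crux of the argument is really the observation that $f$ is assembled out of the resolvent functions already controlled by Corollary \ref{cor:mon-conv}.
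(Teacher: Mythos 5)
Your proposal is correct and follows essentially the same route as the paper: both expand $f$ through the integral representation of Theorem \ref{thm:int-op-mon}, reduce inequality (\ref{eqn:ineq-op-mon}) to the resolvent inequality of Corollary \ref{cor:mon-conv}(i) for each fixed $\lambda$, and integrate against the finite positive measure $\mu$. The only genuine divergence is the equality analysis. The paper works vector by vector: writing $\mathbf{H}(\lambda)$ for the (scaled) deficit at parameter $\lambda$, it chooses for each $\lambda'$ a vector $x_{\lambda'}$ with $\langle \mathbf{H}(\lambda')x_{\lambda'},x_{\lambda'}\rangle>0$, uses continuity in $\lambda$ to produce a neighborhood of $\lambda'$ that $\mu$ cannot charge, and concludes $\mu=0$. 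You instead apply the faithful trace $\tau$ to the operator-valued deficit and reduce to the vanishing of a scalar integral of a non-negative continuous integrand. Your version is shorter and costs nothing extra, since faithfulness of $\tau$ is a standing hypothesis of the section and is in any case already embedded in the equality condition of Corollary \ref{cor:mon-conv} that both proofs invoke; both arguments ultimately rest on the same dichotomy, namely that $D_\lambda$ (the paper's $\mathbf{H}(\lambda)$) vanishes either for every $\lambda$ or for no $\lambda$. One incidental point in your favor: your conclusion that equality forces ``$\mu=0$, i.e.\ $f$ affine'' is the precise condition --- if $b=0$ the function is constant and equality still holds --- so the theorem's phrase ``linear with positive slope'' is slightly too strong, and the paper's own proof likewise only derives $f(t)=a+bt$ before asserting positivity of the slope.
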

\begin{proof}
Let $f$ be operator monotone. From Theorem \ref{thm:int-op-mon}, we have a finite positive measure $\mu$ on $(0, \infty)$ and real numbers $a, b$ with $b \ge 0$, such that $$f(t) = a + bt  + \int_{0}^{\infty} \frac{(\lambda + 1)t}{\lambda + t} \; d\mu (\lambda).$$
Consider the continuous family of operators $\mathbf{H}(\lambda) := \Phi((\lambda + 1) A (\lambda I+A)^{-1}) - (\lambda + 1) \Phi(A)(\lambda I + \Phi(A))^{-1}$ parametrized by $\lambda \in (0, \infty)$. Note that as $A$ is regular, $\mathbf{H}(0)$ is well-defined and equal to $0$. By Corollary \ref{cor:mon-conv} (i), we have that the family $\mathbf{H}$ consists of positive operators and $\mathbf{H}(\lambda) = 0$ for some $\lambda \in (0, \infty)$ if and only if $\Phi(A) = A$. We get the desired inequality below, $$\Phi(f(A)) - f(\Phi(A)) = \int_{0}^{\infty} \mathbf{H}(\lambda) \; d\mu(\lambda) \ge 0.$$

The next step is to find necessary and sufficient conditions for equality in (\ref{eqn:ineq-op-mon}). Note that for any continuous function $f$ on $[0, \infty)$, if $A$ is in $\mathscr{S}$, then $f(A)$ is also in $\mathscr{S}$. Thus $\Phi(A) = A$ implies that $\Phi(f(A)) = f(A) = f(\Phi(A)).$ Also if $f$ is linear, clearly $f(\Phi(A)) = \Phi(f(A))$. 

If $f(\Phi(A)) = \Phi(f(A))$ and  $\Phi(A) \ne A$, from the equality condition in Corollary \ref{cor:mon-conv}, we conclude that $\mathbf{H}(\lambda ') \neq 0$ for any $\lambda ' \in (0, \infty)$. For a vector $x$ in $\mathscr{H}$, one may define a positive continuous function $h_x$ on $(0, \infty)$ by $h_x(\lambda) = \langle \mathbf{H}(\lambda)x, x \rangle $. Note that for each $\lambda '$ in $(0, \infty)$, there is a vector $x_{\lambda '}$ such that $h_{x_{\lambda '}}(\lambda ') > 0$ and thus a neighborhood $N_{\lambda '}$ of $\lambda '$ where $h_{x_{\lambda '}}$ is strictly positive. As $\int_{0}^{\infty}h_x(\lambda) \; d\mu(\lambda) = 0$ for all vectors $x$ in $\mathscr{H}$, in particular, we have that $\int_{0}^{\infty} h_{x_{\lambda '}}(\lambda) \; d\mu(\lambda) = 0$. We conclude that $\mu$ is not supported on $N_{\lambda '}$ for any $\lambda '$ in $(0, \infty)$ and thus $f(t) = a + bt$. Hence if $f(\Phi(A)) = \Phi(f(A))$ and $\Phi(A) \ne A$, we have that $f$ must be a linear function with positive slope.

\end{proof}

\begin{remark}
\label{rmrk:ineq-op-mon-conv}
The above inequality (\ref{eqn:ineq-op-mon}) holds true along with the equality condition, even under the assumption that $f$ is operator monotone on the open interval $(0, \infty)$. Let $A$ be a regular positive operator in $\mathscr{R}$, and $\varepsilon > 0$ be such that $\varepsilon I \le A$. Consider the function $g$ on $[0, \infty)$ defined by $g(x) := f(x + \frac{\varepsilon}{2})$, and the regular operator $A_{\varepsilon} := A - \frac{\varepsilon}{2} I$. Clearly, $g$ is an operator monotone function on $[0, \infty)$. By Theorem \ref{thm:ineq-op-mon}, $g(\Phi(A_{\varepsilon})) \le \Phi(g(A_{\varepsilon}))$ and equality holds if and only if either $g$ is linear with positive slope or $\Phi(A_{\varepsilon}) = A_{\varepsilon}$. Translating in terms of $f$, we get $\Phi(f(A)) = \Phi(f(A_{\varepsilon}+\frac{\varepsilon}{2}I)) \le ( f(\Phi(A_{\varepsilon}) + \frac{\varepsilon}{2}I)  = f(\Phi(A)) $ with equality if and only if either $f$ is linear with positive slope or $\Phi(A) - \frac{\varepsilon}{2}I = A - \frac{\varepsilon}{2}I$ \emph{i.e.} $\Phi(A) = A.$    
\end{remark}

\begin{thm}
\label{thm:ineq-op-conv}
\textsl{Let $f$ be an operator convex function defined on the interval $[0, \infty)$. Then for a regular positive operator $A$ in $\mathscr{R}$, we have the following inequality : 
\begin{equation}
f(\Phi(A)) \le \Phi(f(A)),
\end{equation}
with equality if and only if either $f$ is linear, or $\Phi(A) = A$ {\it i.e.}\ $A \in \mathscr{S}$.}
\end{thm}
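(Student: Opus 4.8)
The plan is to mirror the proof of Theorem \ref{thm:ineq-op-mon}, replacing the integral representation of operator monotone functions (Theorem \ref{thm:int-op-mon}) by that of operator convex functions (Theorem \ref{thm:int-op-conv}), and replacing Corollary \ref{cor:mon-conv}(i) by Corollary \ref{cor:mon-conv}(ii). First I would treat the case in which $f'(0^{+})$ exists, so that Theorem \ref{thm:int-op-conv} applies and gives
$$f(t) = a + bt + ct^2 + \int_{0}^{\infty} \frac{(\lambda+1)t^2}{\lambda+t} \; d\mu(\lambda)$$
for a finite positive measure $\mu$ on $(0,\infty)$ and some $c \ge 0$. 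Applying $\Phi$ and $f$ term by term, the affine part $a + bt$ contributes nothing since $\Phi$ is linear and unital, so that $\Phi(aI + bA) = aI + b\Phi(A)$; the quadratic part contributes $c(\Phi(A^2) - \Phi(A)^2) \ge 0$ by inequality (\ref{eqn:ineq-square}) of Theorem \ref{thm:ineq-square-inverse}; and for the integral part I would introduce, exactly as in Theorem \ref{thm:ineq-op-mon}, the continuous family of positive operators
$$\mathbf{H}(\lambda) := \Phi\big((\lambda+1)A^2(\lambda I + A)^{-1}\big) - (\lambda+1)\Phi(A)^2(\lambda I + \Phi(A))^{-1},$$
whose positivity is precisely Corollary \ref{cor:mon-conv}(ii), with $\mathbf{H}(\lambda) = 0$ for some (equivalently every) $\lambda \in (0,\infty)$ exactly when $\Phi(A) = A$. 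Integrating yields $\Phi(f(A)) - f(\Phi(A)) = c(\Phi(A^2) - \Phi(A)^2) + \int_{0}^{\infty} \mathbf{H}(\lambda) \; d\mu(\lambda) \ge 0$, which is the desired inequality.

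For the equality analysis, the two easy directions are immediate: if $\Phi(A) = A$ then $A \in \mathscr{S}$, so $f(A) \in \mathscr{S}$ and $\Phi(f(A)) = f(A) = f(\Phi(A))$; and if $f$ is linear then both sides agree by linearity of $\Phi$. For the converse, suppose equality holds but $\Phi(A) \ne A$. Since both summands above are positive, each must vanish. The equality condition in (\ref{eqn:ineq-square}) forces $\Phi(A^2) - \Phi(A)^2 \ne 0$, hence $c = 0$. For the integral term I would reuse the weak-functional argument of Theorem \ref{thm:ineq-op-mon}: since $\mathbf{H}(\lambda') \ne 0$ for every $\lambda'$, for each $\lambda'$ there is a vector $x_{\lambda'}$ with $\langle \mathbf{H}(\lambda')x_{\lambda'}, x_{\lambda'}\rangle > 0$ and hence a neighborhood of $\lambda'$ on which the continuous function $h_{x_{\lambda'}}(\lambda) := \langle \mathbf{H}(\lambda)x_{\lambda'}, x_{\lambda'}\rangle$ is strictly positive; as $\int_{0}^{\infty} h_{x_{\lambda'}}(\lambda) \; d\mu(\lambda) = 0$, the measure $\mu$ charges no such neighborhood, so $\mu = 0$. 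Then $f(t) = a + bt$ is linear, completing the converse.

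It remains to remove the hypothesis that $f'(0^{+})$ exists, and this is the only genuine obstacle: operator convex functions such as $-\sqrt{t}$ have $f'(0^{+}) = -\infty$, so Theorem \ref{thm:int-op-conv} does not apply to them directly. I would dispose of this by the translation device of Remark \ref{rmrk:ineq-op-mon-conv}: since $A$ is regular, fix $\varepsilon > 0$ with $\varepsilon I \le A$, put $A_{\varepsilon} := A - \tfrac{\varepsilon}{2}I$ (again regular positive), and set $g(x) := f(x + \tfrac{\varepsilon}{2})$ on $[0,\infty)$. Then $g$ is operator convex (operator convexity is preserved under translation of the argument) and smooth at $0$, so $g'(0^{+}) = f'(\tfrac{\varepsilon}{2})$ exists and the case already proved applies to $g$ and $A_{\varepsilon}$. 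Translating back via $g(\Phi(A_{\varepsilon})) = f(\Phi(A))$ and $g(A_{\varepsilon}) = f(A)$ transfers both the inequality and the equality condition to $f$ and $A$, on noting that $g$ is linear if and only if $f$ is linear and $\Phi(A_{\varepsilon}) = A_{\varepsilon}$ if and only if $\Phi(A) = A$. Apart from this regularization at the endpoint, the only point demanding care is disentangling the two independent sources of strict inequality in the equality analysis, which the argument above handles by forcing $c = 0$ and $\mu = 0$ separately.
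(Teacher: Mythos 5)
Your proposal is correct and follows essentially the same route as the paper's own proof: reduce to the case where $f'(0^{+})$ exists via the translation $g(x)=f(x+\tfrac{\varepsilon}{2})$, $A_{\varepsilon}=A-\tfrac{\varepsilon}{2}I$, use the integral representation of Theorem \ref{thm:int-op-conv}, handle the quadratic term with inequality (\ref{eqn:ineq-square}) and the integral term with Corollary \ref{cor:mon-conv}(ii), and in the equality analysis force $c=0$ and $\mu=0$ separately. The only difference is expository: the paper compresses the integral-term argument to ``mimic the proof of Theorem \ref{thm:ineq-op-mon},'' whereas you write out the family $\mathbf{H}(\lambda)$ and the weak-functional argument explicitly.
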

\begin{proof}
Let $f$ be operator convex and assume that $f'(0^{+})$ exists. From Theorem \ref{thm:int-op-conv}, we have a finite positive measure $\mu$ on $[0, \infty)$ and real numbers $a, b, c$ with $c$ non-negative, such that $$f(t) = a + bt + ct^2 + \int_{0}^{\infty} \frac{(\lambda + 1) t^2}{\lambda + t} \; d\mu (\lambda).$$
Using Corollary \ref{cor:mon-conv} (ii) and inequality (\ref{eqn:ineq-square}), we may essentially mimic the proof in Theorem \ref{thm:ineq-op-mon} adapting it to the case of operator convex functions. What deserves mention is the disappearance of the quadratic term $cx^2$ in the equality case. We start with the assumption that $\Phi(A) \ne A$. Since $\Phi(f(A)) = f(\Phi(A))$, we must have $c\Phi(A^2) = c \Phi(A)^2$ as $\langle (c\Phi(A^2) - c \Phi(A)^2)x, x \rangle = 0$ for all $x$ in $\mathscr{H}$. Thus, from the equality condition in Theorem \ref{thm:ineq-square-inverse}, we see that $c = 0$. The rest of the proof for the equality case is similar to the case of operator monotone functions. We conclude that if $f(\Phi(A)) = \Phi(f(A))$ and $\Phi(A) \ne A$, $f$ must be a linear function.

Finally we get rid of the assumption of existence of $f'(0^{+})$. As in Remark \ref{rmrk:ineq-op-mon-conv}, let $\varepsilon > 0$ be such that $\varepsilon I \le A$ and define a function $g$ on $[0, \infty)$ by $g(x) := f(x + \frac{\varepsilon}{2})$. Note that $g$ is an operator convex function on $[0, \infty)$ and $g'(0) = f'(\frac{\varepsilon}{2})$ exists. For the regular positve operator $A_{\varepsilon} := A - \frac{\varepsilon}{2}I$, we conclude that $g(\Phi(A_{\varepsilon})) \le \Phi(g(A_{\varepsilon})) \Rightarrow  f(\Phi(A)) = f(\Phi(A_{\varepsilon}) + \frac{\varepsilon}{2}I) \le \Phi(f(A_{\varepsilon}+\frac{\varepsilon}{2}I)) = \Phi(f(A))$ with equality if and only if either $g$ is linear or $\Phi(A_{\varepsilon}) = A_{\varepsilon}$ {\it i.e.}\ $f$ is linear or $\Phi(A) = A$.

\end{proof}

We alert the reader to note that, although the inequalities in Theorem \ref{thm:ineq-op-mon}, Theorem \ref{thm:ineq-op-conv} involve similar-looking quantities, the direction is reversed.

\begin{thm}
\label{cor:ineq-det-op-mon}
\textsl{Let $f : (0, \infty) \rightarrow (0, \infty)$ be a non-constant operator monotone function. Then for a regular positive operator $A$ in $\mathscr{R}$, we have the following inequality, 
\begin{equation}
\label{eqn:ineq-det-op-mon}
\Delta(f(A)) \le \Delta(f(\Phi(A)),
\end{equation} with equality if and only if $\Phi(A) = A$ {\it i.e.}\ $A \in \mathscr{S}$.}
\end{thm}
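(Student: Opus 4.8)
The plan is to sandwich the quantity $\Delta(f(A))$ below $\Delta(f(\Phi(A)))$ by passing through the intermediate term $\Delta(\Phi(f(A)))$, combining the Hadamard-type inequality of Theorem \ref{thm:general-hadamard} with the Jensen-type operator inequality of Theorem \ref{thm:ineq-op-mon}. Since $f$ maps $(0,\infty)$ into $(0,\infty)$ and $A$ is regular positive, the spectrum of $A$ is a compact subset of $(0,\infty)$, so $f(A)$ is again a regular positive operator, and likewise $f(\Phi(A))$ makes sense because $\Phi(A)$ is regular positive. First I would apply Theorem \ref{thm:general-hadamard} to the regular positive operator $f(A)$ to obtain $\Delta(f(A)) \le \Delta(\Phi(f(A)))$. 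Next, invoking Remark \ref{rmrk:ineq-op-mon-conv}, which extends Theorem \ref{thm:ineq-op-mon} to operator monotone functions on the open interval $(0,\infty)$, I would get the operator inequality $\Phi(f(A)) \le f(\Phi(A))$. Because $\Phi(f(A))$ is regular (it dominates $\varepsilon I$ whenever $f(A) \ge \varepsilon I$) and $\Delta$ is monotone on the positive cone [property (v) of $\Delta$], this gives $\Delta(\Phi(f(A))) \le \Delta(f(\Phi(A)))$. Chaining the two inequalities yields the desired bound $\Delta(f(A)) \le \Delta(f(\Phi(A)))$.

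For the equality analysis, the forward implication is immediate: if $\Phi(A) = A$, then $A \in \mathscr{S}$, hence $f(A) \in \mathscr{S}$, so $\Phi(f(A)) = f(A) = f(\Phi(A))$ and both sides of the inequality coincide. For the converse, suppose $\Delta(f(A)) = \Delta(f(\Phi(A)))$. Then each of the two links in the chain above must itself be an equality. Equality in the first link forces $\Phi(f(A)) = f(A)$ by the equality condition in Theorem \ref{thm:general-hadamard}. Equality in the second link, combined with $\Phi(f(A)) \le f(\Phi(A))$ and the regularity of $\Phi(f(A))$, forces $\Phi(f(A)) = f(\Phi(A))$ by the equality clause of the monotonicity property of $\Delta$. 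Putting these together gives $f(A) = f(\Phi(A))$.

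The remaining step, which I expect to be the main subtlety, is to deduce $A = \Phi(A)$ from the operator identity $f(A) = f(\Phi(A))$. Here I would use that a non-constant operator monotone function is strictly increasing, hence injective. Concretely, after a shift reducing to the half-line (as in Remark \ref{rmrk:ineq-op-mon-conv}) one may apply the integral representation of Theorem \ref{thm:int-op-mon}, whose differentiated form $f'(t) = b + \int_{0}^{\infty} \frac{(\lambda+1)\lambda}{(\lambda + t)^2}\, d\mu(\lambda)$ is strictly positive unless $b=0$ and $\mu = 0$, i.e. unless $f$ is constant; equivalently, one invokes the real-analyticity of operator monotone functions, a non-decreasing analytic function being either strictly increasing or constant. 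Thus $f$ admits a continuous inverse $f^{-1}$ on its range, and applying $f^{-1}$ through the continuous functional calculus to $f(A) = f(\Phi(A))$, valid since both $A$ and $\Phi(A)$ have spectra in $(0,\infty)$, yields $A = \Phi(A)$, that is, $A \in \mathscr{S}$. This approach has the pleasant feature of sidestepping any separate treatment of the linear case that surfaces in the equality clause of Theorem \ref{thm:ineq-op-mon}, since injectivity disposes of all non-constant $f$ uniformly.
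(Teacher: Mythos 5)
Your proof is correct, and it reaches the result by a genuinely different route than the paper. The paper's proof composes with the logarithm: since $\log$ is operator monotone on $(0,\infty)$, so is $\log f$, and $\log f$ cannot be linear (a non-constant $f$ with $\log f$ affine would be an exponential, which is not operator monotone); a single application of Theorem \ref{thm:ineq-op-mon} via Remark \ref{rmrk:ineq-op-mon-conv} to $\log f$ gives $\Phi(\log f(A)) \le \log f(\Phi(A))$ with equality precisely when $\Phi(A)=A$, and then taking traces and exponentiating converts this in one stroke into the determinant inequality, because $\Delta = \exp \circ \, \tau \circ \log$; faithfulness of $\tau$ upgrades ``equal traces plus operator inequality'' to operator equality, so the equality condition transfers immediately. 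Your argument instead interpolates the term $\Delta(\Phi(f(A)))$ between the two sides, combining Theorem \ref{thm:general-hadamard} applied to the regular positive operator $f(A)$ with the operator Jensen inequality $\Phi(f(A)) \le f(\Phi(A))$ and the monotonicity of $\Delta$ on the positive cone; the equality case then reduces to the operator identity $f(A) = f(\Phi(A))$, which you resolve by injectivity. This is more modular, reusing results already established, and it has the nice feature you point out: injectivity of a non-constant operator monotone function disposes of the ``linear $f$'' alternative in the equality clause of Theorem \ref{thm:ineq-op-mon} uniformly, whereas the paper sidesteps that alternative only because $\log f$ is automatically non-linear. The price is that your route needs a fact the paper never records --- that a non-constant operator monotone function on $(0,\infty)$ is strictly increasing, hence invertible with continuous inverse on its range --- but your justification via the differentiated integral representation of Theorem \ref{thm:int-op-mon} (or real-analyticity) is sound, and the subsequent appeal to the composition rule for the continuous functional calculus is valid since the spectra of $A$ and $\Phi(A)$ are compact subsets of $(0,\infty)$. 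Both proofs are complete; yours trades the paper's one-shot logarithmic trick for a two-step sandwich plus an injectivity argument.
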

\begin{proof}
As $\log$ is an operator monotone function on $(0, \infty)$, we observe that $\log f$ is also operator monotone on $(0, \infty)$. As the exponential function is not operator monotone, note that $\log f$ is not linear. Thus from Theorem \ref{thm:ineq-op-mon} and Remark \ref{rmrk:ineq-op-mon-conv}, we have that $$\Phi(\log f(A)) \le \log f(\Phi(A))),$$
with equality if and only if $\Phi(A) = A$. As $\tau$ is a faithful state, using the trace-preserving nature of $\Phi$, we have $$\tau(\Phi(\log f(A))) = \tau(\log f(A)) \le \tau(\log f (\Phi(A))),$$
and equality holds if and only if $\Phi(A) = A.$ Applying the exponential function, which is strictly increasing, to both sides of the inequality, we get the desired inequality with equality if and only if $\Phi(A) = A$.
\end{proof}

\begin{cor}
\label{cor:ineq-det-perturb}
\textsl{For a regular positive operator $A$ in $\mathscr{R}$, we have that 
\begin{equation}
\label{eqn:ineq-det-perturb}
\Delta(I+\Phi(A)^{-1}) \le \Delta(I + A^{-1}),
\end{equation}
with equality if and only if $\Phi(A) = A$ {\it i.e.}\ $A \in \mathscr{S}$.}
\end{cor}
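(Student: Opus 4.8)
The plan is to obtain (\ref{eqn:ineq-det-perturb}) as a direct consequence of Theorem \ref{cor:ineq-det-op-mon}, applied to a judiciously chosen operator monotone function. The key observation is that $I + A^{-1}$ is the inverse of $g(A)$, where $g(x) := \frac{x}{1+x} = \left(1 + \frac{1}{x}\right)^{-1}$ is precisely the operator monotone function flagged in the introduction. Indeed, since $g(x)^{-1} = 1 + \frac{1}{x}$, the continuous functional calculus gives $g(A)^{-1} = I + A^{-1}$ and $g(\Phi(A))^{-1} = I + \Phi(A)^{-1}$, the latter being meaningful because $\Phi(A)$ is again a regular positive operator (as noted at the start of this section).

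First I would verify that $g$ meets the hypotheses of Theorem \ref{cor:ineq-det-op-mon}: it is non-constant, maps $(0,\infty)$ into $(0,1) \subseteq (0,\infty)$, and is operator monotone on $(0,\infty)$. The last point holds because $s \mapsto \frac{1}{s}$ is operator antitone, so $t \mapsto \frac{1}{1+t}$ is operator antitone and hence $t \mapsto 1 - \frac{1}{1+t} = \frac{t}{1+t}$ is operator monotone; alternatively, $g$ is of the form appearing in the integral representation of Theorem \ref{thm:int-op-mon}. Applying Theorem \ref{cor:ineq-det-op-mon} to $g$ and the regular positive operator $A$ then yields
$$\Delta(g(A)) \le \Delta(g(\Phi(A))),$$
with equality if and only if $\Phi(A) = A$.

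To convert this into the stated inequality, I would take reciprocals and invoke property (iii) of $\Delta$, namely $\Delta(X^{-1}) = \Delta(X)^{-1}$ for regular $X$. Since $\Delta(g(A))$ and $\Delta(g(\Phi(A)))$ are positive reals, the inequality $\Delta(g(A)) \le \Delta(g(\Phi(A)))$ is equivalent to $\Delta(g(\Phi(A)))^{-1} \le \Delta(g(A))^{-1}$, that is, $\Delta(g(\Phi(A))^{-1}) \le \Delta(g(A)^{-1})$, which is exactly $\Delta(I + \Phi(A)^{-1}) \le \Delta(I + A^{-1})$. The equality characterization transfers verbatim from Theorem \ref{cor:ineq-det-op-mon}, giving equality precisely when $\Phi(A) = A$, i.e. $A \in \mathscr{S}$.

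There is essentially no serious obstacle here: the whole content lies in spotting the right function $g$ and in the harmless bookkeeping of inverting both the operators (via functional calculus) and the scalar determinant values (via property (iii)). The one point meriting a moment's care is confirming that $g(A)$ and $g(\Phi(A))$ are regular, so that their inverses and the corresponding determinant identities are legitimate; this follows because the spectra of $A$ and $\Phi(A)$ are bounded away from $0$, which forces the spectra of $g(A)$ and $g(\Phi(A))$ to be bounded away from $0$ as well.
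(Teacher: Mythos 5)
Your proposal is correct and follows exactly the paper's route: the paper's one-line proof also applies Theorem \ref{cor:ineq-det-op-mon} to the operator monotone function $f(x) = (1+\frac{1}{x})^{-1} = \frac{x}{1+x}$ and implicitly performs the same inversion via $\Delta(X^{-1}) = \Delta(X)^{-1}$. Your write-up merely makes explicit the bookkeeping (operator monotonicity of $g$, regularity of $g(A)$ and $g(\Phi(A))$, transfer of the equality condition) that the paper leaves to the reader.
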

\begin{proof} This follows from Corollary \ref{cor:ineq-det-op-mon}, by choosing $f(x) = (1+\frac{1}{x})^{-1}$ which is a positive-valued operator monotone function on $(0, \infty).$ 
\end{proof}

\begin{thm}
\label{thm:ineq-det-matic}
\textsl{ For a regular positive operator $A$ in $\mathscr{R}$, and a positive operator $B$ in $\mathscr{S}$, the following inequality holds : 
\begin{equation}
\label{eqn:ineq-det-matic}
\frac{\Delta(\Phi(A) + B))}{\Delta(\Phi(A))} \le \frac{\Delta(A + B)}{\Delta(A)}
\end{equation}
If $B$ is regular, equality holds if and only if $\Phi(A) = A$ {\it i.e.}\ $A \in \mathscr{S}$.\\
(Note that when $B=0$, equality holds for any regular positive operator $A$. This illustrates that when $B$ is not regular, the characterizing conditions for equality may not be as simple in form.)}
\end{thm}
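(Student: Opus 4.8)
The plan is to reduce the inequality, in the case that $B$ is regular, to Corollary~\ref{cor:ineq-det-perturb} (which is the special case $B = I$) by a congruence substitution, and then to recover the general positive-$B$ inequality by a limiting argument. Since $B$ lies in $\mathscr{S}$ and is regular, the operator $\tilde{A} := B^{-\frac{1}{2}} A B^{-\frac{1}{2}}$ is regular and positive, and the module property of the conditional expectation yields $\Phi(\tilde{A}) = B^{-\frac{1}{2}} \Phi(A) B^{-\frac{1}{2}}$ because $B^{-\frac{1}{2}} \in \mathscr{S}$.

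First I would rewrite both ratios in (\ref{eqn:ineq-det-matic}) in terms of $\tilde{A}$. From the factorization $A + B = B^{\frac{1}{2}}(I + \tilde{A})B^{\frac{1}{2}}$, the multiplicativity of $\Delta$, and the elementary identity $\Delta(I + \tilde{A}) = \Delta(\tilde{A})\,\Delta(I + \tilde{A}^{-1})$, one obtains
\[
\frac{\Delta(A + B)}{\Delta(A)} = \Delta(I + \tilde{A}^{-1}), \qquad \frac{\Delta(\Phi(A) + B)}{\Delta(\Phi(A))} = \Delta(I + \Phi(\tilde{A})^{-1}),
\]
the second identity using $\Phi(A) = B^{\frac{1}{2}}\Phi(\tilde{A})B^{\frac{1}{2}}$. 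Hence the desired inequality is precisely $\Delta(I + \Phi(\tilde{A})^{-1}) \le \Delta(I + \tilde{A}^{-1})$, which is Corollary~\ref{cor:ineq-det-perturb} applied to the regular positive operator $\tilde{A}$. The equality clause of that corollary gives equality if and only if $\Phi(\tilde{A}) = \tilde{A}$; conjugating this identity by $B^{\frac{1}{2}}$ shows $\Phi(\tilde{A}) = \tilde{A} \Leftrightarrow \Phi(A) = A$, which settles the equality statement when $B$ is regular.

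To obtain the inequality for an arbitrary positive $B \in \mathscr{S}$, I would apply the regular case to $B + \varepsilon I$, which is regular, positive, and lies in $\mathscr{S}$ for every $\varepsilon > 0$, giving
\[
\frac{\Delta(\Phi(A) + B + \varepsilon I)}{\Delta(\Phi(A))} \le \frac{\Delta(A + B + \varepsilon I)}{\Delta(A)},
\]
and then let $\varepsilon \to 0^{+}$, invoking the continuity property $\lim_{\varepsilon \to 0^{+}} \Delta(X + \varepsilon I) = \Delta(X)$ applied to the positive operators $X = A + B$ and $X = \Phi(A) + B$ (the denominators $\Delta(A), \Delta(\Phi(A))$ are fixed and nonzero).

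The only genuine subtleties I anticipate are bookkeeping ones: the congruence rewriting is available only when $B$ is regular, which forces the separate limiting argument for singular $B$, and this limiting argument necessarily discards the characterization of equality — consistent with the parenthetical remark in the statement that the equality conditions are more delicate for non-regular $B$. In effect the substantive analytic work is all front-loaded into Corollary~\ref{cor:ineq-det-perturb}, and thence into the operator-monotone machinery of Theorem~\ref{thm:ineq-op-mon} for the function $(1+\frac{1}{x})^{-1}$; once that is in hand, the present theorem is just the congruence reformulation of the case $B = I$.
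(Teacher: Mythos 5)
Your proposal is correct and takes essentially the same route as the paper's proof: both reduce the claim to Corollary \ref{cor:ineq-det-perturb} via the congruence $X = B^{-\frac{1}{2}}AB^{-\frac{1}{2}}$ (using $B^{\pm\frac{1}{2}} \in \mathscr{S}$ and the module property of $\Phi$), and both obtain the equality characterization for regular $B$ from the equality clause of that corollary in exactly the same way. The only cosmetic difference is the limiting step for singular $B$: you perturb $B$ alone to $B + \varepsilon I$ and pass to the limit in the numerators, whereas the paper perturbs $A \mapsto A - \frac{\varepsilon}{2}I$ and $B \mapsto B + \frac{\varepsilon}{2}I$ simultaneously so that the sums $A + B$ and $\Phi(A) + B$ remain fixed and only the denominators move; both limits are justified by the stated continuity properties of $\Delta$.
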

\begin{proof}
As $A$ is regular, there is an $\varepsilon > 0$ such that $\varepsilon I \le A$. First we prove the inequality for the regular operators $A_{\varepsilon} := A - \frac{\varepsilon}{2}I$, $B_{\varepsilon} := B + \frac{\varepsilon}{2}I$. We observe that $\frac{\varepsilon}{2}I \le \Phi(A_{\varepsilon}), \frac{\varepsilon}{2}I \le B_{\varepsilon}$ as a result of which $\Phi(A_{\varepsilon}), B_{\varepsilon}$ are also regular. Using the multiplicativity of $\Delta$, we can rewrite both sides of the inequality in the following manner :
\begin{align*}
\frac{\Delta(A_{\varepsilon} + B_{\varepsilon})}{\Delta(A_{\varepsilon})} &= \Delta(I+B_{\varepsilon}^{\frac{1}{2}}A_{\varepsilon}^{-1}B_{\varepsilon}^{\frac{1}{2}}),\\
\frac{\Delta(\Phi(A_{\varepsilon}) + B_{\varepsilon})}{\Delta(\Phi(A_{\varepsilon}))}&=\Delta
(I+B_{\varepsilon}^{\frac{1}{2}}\Phi(A_{\varepsilon})^{-1}B_{\varepsilon}^{\frac{1}{2}}).
\end{align*}
Note that $X := B_{\varepsilon}^{-\frac{1}{2}}A_{\varepsilon}B_{\varepsilon}^{-\frac{1}{2}}$ is a regular positive operator. As $B_{\varepsilon}^{-\frac{1}{2}}$ is in $\mathscr{S}$, $\Phi(X) = B_{\varepsilon}^{-\frac{1}{2}}\Phi(A_{\varepsilon})B_{\varepsilon}^{-\frac{1}{2}}$. From Corollary \ref{cor:ineq-det-op-mon}, $$\Delta(I+\Phi(X)^{-1}) \le \Delta(I+X^{-1}).
$$
which proves the inequality for $A_{\varepsilon}, B_{\varepsilon}$. Note that $\Phi(A) + B = \Phi(A_{\varepsilon}) + B_{\varepsilon}$ and $A + B = A_{\varepsilon} + B_{\varepsilon}$. Thus we have that $$\frac{\Delta(\Phi(A) + B)}{\Delta(\Phi(A_{\varepsilon}))} \le \frac{\Delta(A + B)}{\Delta(A_{\varepsilon})}.$$
Taking the limit as $\varepsilon \rightarrow 0^{+}$, we get the required inequality.

If $B$ is regular, then we may directly follow the above steps without performing the perturbative step of defining $A_{\varepsilon}, B_{\varepsilon}$ and instead defining $X := B^{-\frac{1}{2}}AB^{-\frac{1}{2}}$. Noting that $B^{-\frac{1}{2}}$ is in $\mathscr{S}$ and using the equality condition in Corollary \ref{cor:ineq-det-perturb}, we see that equality holds if and only if $\Phi(X) = X \Leftrightarrow B^{-\frac{1}{2}}\Phi(A)B^{-\frac{1}{2}} = \Phi(B^{-\frac{1}{2}}AB^{-\frac{1}{2}}) = B^{-\frac{1}{2}}AB^{-\frac{1}{2}} \Leftrightarrow \Phi(A) = A$.

\end{proof}

\begin{remark}
\label{rmrk:ineq-det-matic-var}
The following generalized form of inequality (\ref{eqn:ineq-det-matic}), 
\begin{equation}
\label{eqn:ineq-det-matic-var}
\frac{\Delta(\Phi(A) + \Phi(B))}{\Delta(\Phi(A))} = \frac{\Delta(\Phi(A + B))}{\Delta(\Phi(A))} \le \frac{\Delta(A + B)}{\Delta(A)}
\end{equation}
does not hold for an arbitrary choice of positive operators $A, B$ in $\mathscr{R}$ with $A$ being regular. From Theorem 4.1, we have that $\Delta(I + B) \le \Delta (\Phi(I + B)) = \Delta(I + \Phi(B)) $ with equality if and only if $\Phi(I + B) = I + B$ \emph{i.e.} $\Phi(B) = B$. So inequality (\ref{eqn:ineq-det-matic-var}) is clearly untrue if $A = I$ and $B$ is not in $\mathscr{S}$ \emph{i.e.} $\Phi(B) \ne B$. Remark \ref{rmrk:ineq-det-matic-matrices}
\end{remark}

In the remaining portion of the section, we establish versions of some of the inequalities proved above, in a more general setting. Broadly speaking, conditional expectations on $\mathscr{R}$ are replaced by the more general unital positive maps on $\mathscr{R}$, again denoted by $\Phi$. For a detailed study of positive linear maps between operator algebras, we direct the reader to \cite[St{{\o}}rmer]{stormer-paper} (see also \cite{stormer}). By the Kadison-Schwarz inequality (\cite{kad-schwarz-ineq}), inequality (\ref{eqn:ineq-square}) still holds in this context. In addition, if $\Phi$ were a unital $2$-positive map, then the proof for inequality (\ref{eqn:ineq-inverse}) in Theorem \ref{thm:ineq-square-inverse} goes through. The map $\Phi : \mathscr{R} \rightarrow \mathscr{R}$ is said to be $\tau$-preserving or trace-preserving if $\tau(X) = \tau(\Phi(X))$ for all $X$ in $\mathscr{R}$. A careful scrutiny would reveal that for $\Phi$ a trace-preserving unital $2$-positive map, the determinant inequalities (\ref{eqn:ineq-det-op-mon}), (\ref{eqn:ineq-det-perturb}) are still valid. We mention the appropriate version of the result for unital $2$-positive maps below and leave it to the reader to work out the details. The trade-off for this level of generality is that we are unable to find straightforward conditions for equality.

\begin{thm}
Let $\Phi : \mathscr{R} \rightarrow \mathscr{R}$ be a unital $2$-positive map which is $\tau$-preserving and $A$ be a regular positive operator in $\mathscr{R}$. For a positive-valued operator monotone function $f$ on $(0, \infty)$, we have the following inequality : $$\Delta(f(A)) \le \Delta(f(\Phi(A))).$$
\end{thm}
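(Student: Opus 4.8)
The plan is to run the proof of Corollary~\ref{cor:ineq-det-op-mon} essentially unchanged; the only real work is to confirm that every intermediate inequality it invokes survives the passage from a $\tau$-preserving conditional expectation to a unital $2$-positive $\tau$-preserving map $\Phi$. The chain of dependencies is: inequality (\ref{eqn:ineq-inverse}) yields Corollary~\ref{cor:mon-conv}(i), which feeds the integral-representation argument of Theorem~\ref{thm:ineq-op-mon} (and its open-interval form, Remark~\ref{rmrk:ineq-op-mon-conv}), which in turn yields Corollary~\ref{cor:ineq-det-op-mon}. I would verify that, apart from the single step discussed below, none of these proofs uses the module identity $\Phi(S_1 R S_2) = S_1 \Phi(R) S_2$ peculiar to conditional expectations; they use only linearity, unitality, positivity, boundedness (automatic for positive maps), inequality (\ref{eqn:ineq-inverse}), and, at the final step, trace-preservation.

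The one step requiring more than positivity is inequality (\ref{eqn:ineq-inverse}), $\Phi(A)^{-1} \le \Phi(A^{-1})$ for regular positive $A$. In the proof of Theorem~\ref{thm:ineq-square-inverse} this came from complete positivity, but inspection shows only the positivity of $\Phi \otimes I_2$ is used, which is exactly $2$-positivity: applying the positive map $\Phi \otimes I_2$ to the positive operator $\left[\begin{smallmatrix} A & I \\ I & A^{-1}\end{smallmatrix}\right]$ gives the positive operator $\left[\begin{smallmatrix} \Phi(A) & I \\ I & \Phi(A^{-1})\end{smallmatrix}\right]$, and the Schur-complement criterion of Lemma~\ref{lem:positive-two-tensor} delivers $\Phi(A)^{-1} \le \Phi(A^{-1})$ (here $\Phi(A)$ is regular because $\varepsilon I \le A$ forces $\varepsilon I \le \Phi(A)$ by positivity and unitality). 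Granting (\ref{eqn:ineq-inverse}), Corollary~\ref{cor:mon-conv}(i) follows from the identity $X(\lambda I + X)^{-1} = I - \lambda(\lambda I + X)^{-1}$ as written, and the measure-theoretic argument of Theorem~\ref{thm:ineq-op-mon} then reproduces $\Phi(g(A)) \le g(\Phi(A))$ for every operator monotone $g$ on $[0,\infty)$; the shift $g(x) = f(x + \tfrac{\varepsilon}{2})$ of Remark~\ref{rmrk:ineq-op-mon-conv} upgrades this to operator monotone functions on $(0,\infty)$.

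To finish, I would argue as in Corollary~\ref{cor:ineq-det-op-mon}. Since $\log$ is operator monotone on $(0,\infty)$ and $f$ maps $(0,\infty)$ into $(0,\infty)$, the composite $\log f$ is operator monotone on $(0,\infty)$, so the extended Theorem~\ref{thm:ineq-op-mon} gives
\[
\Phi(\log f(A)) \le \log f(\Phi(A)).
\]
Applying $\tau$, using that it preserves order and that $\tau(\Phi(\log f(A))) = \tau(\log f(A))$ by trace-preservation, yields $\tau(\log f(A)) \le \tau(\log f(\Phi(A)))$. Both $f(A)$ and $f(\Phi(A))$ are regular positive operators --- their spectra are $f(\sigma(A))$ and $f(\sigma(\Phi(A)))$, compact subsets of $(0,\infty)$ --- so $\Delta(f(A)) = \exp(\tau(\log f(A)))$ and similarly for $\Phi(A)$. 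Exponentiating the trace inequality gives $\Delta(f(A)) \le \Delta(f(\Phi(A)))$.

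The main obstacle is precisely the verification in the second paragraph that $2$-positivity is enough for (\ref{eqn:ineq-inverse}); everything downstream is then formal. I would also note what is forfeited: the equality analyses in Theorems~\ref{thm:ineq-square-inverse} and~\ref{thm:ineq-op-mon} leaned on faithfulness of $\tau$ and on structure special to conditional expectations, so in this generality we obtain only the inequality and no longer have a transparent description of the equality case, consistent with the comment preceding the statement.
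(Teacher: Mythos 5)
Your proposal is correct and follows exactly the route the paper itself indicates for this theorem: $2$-positivity is invoked only to make the proof of inequality (\ref{eqn:ineq-inverse}) go through (positivity of $\Phi \otimes I_2$ applied to $\left[\begin{smallmatrix} A & I \\ I & A^{-1}\end{smallmatrix}\right]$ plus the Schur-complement criterion of Lemma \ref{lem:positive-two-tensor}), after which the chain through Corollary \ref{cor:mon-conv}(i), the integral-representation argument of Theorem \ref{thm:ineq-op-mon}, the shift trick of Remark \ref{rmrk:ineq-op-mon-conv}, and the $\log f$ argument of Theorem \ref{cor:ineq-det-op-mon} is formal, using only linearity, unitality, positivity, and trace-preservation. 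The paper explicitly leaves these details to the reader, and your write-up supplies precisely them, including the correct observation that the equality characterization is forfeited in this generality.
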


In addition to the preceding comments, we also explore another direction of generalization below.  We first prove a convexity inequality as a form of Jensen's inequality. Although the basic idea is contained in the proof of \cite[Lemma 3.10]{jensen}, we adapt the relevant parts to our discussion for the sake of clarity and continuity.
\begin{prop}
\label{prop:trace-unit-positive}
\textsl{Let $\Phi : \mathscr{R} \rightarrow \mathscr{R}$ be a unital positive map. Let $f$ be a real-valued continuous convex function defined on the interval $[a, b] \subseteq \mathbb{R}$. Then for every self-adjoint operator $A$ in $\mathscr{R}$ with spectrum in $[a, b]$, $\Phi(A)$ also has spectrum in $[a, b]$ and we have the following inequality : $$\tau(f(\Phi(A))) \le \tau(\Phi(f(A))).$$}
\end{prop}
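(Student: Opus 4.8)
The plan is to reduce the statement to the already-available abelian-target version of Jensen's inequality, Theorem \ref{thm:jensen-unital-positive}, by composing $\Phi$ with a judiciously chosen conditional expectation. First I would dispose of the spectrum claim, which is the easy part: since $\Phi$ is unital and positive, the order relations $aI \le A \le bI$ give $aI = \Phi(aI) \le \Phi(A) \le \Phi(bI) = bI$, so $\Phi(A)$ is self-adjoint with spectrum in $[a,b]$ and $f(\Phi(A))$ is well-defined.

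Now set $B := \Phi(A)$ and let $\mathfrak{C}$ denote the von Neumann subalgebra of $\mathscr{R}$ generated by $B$ and $I$; since $B$ is self-adjoint, $\mathfrak{C}$ is abelian. Because $\mathscr{R}$ is finite with a faithful normal tracial state $\tau$, there is a unique trace-preserving normal conditional expectation $\Psi : \mathscr{R} \to \mathfrak{C}$ (see \cite{umegaki}, \cite{tomiyama}, \cite[Theorem 7]{cond-expect-kad}). The composite $\Psi \circ \Phi : \mathscr{R} \to \mathfrak{C}$ is then a unital positive map whose codomain $\mathfrak{C}$ is abelian, so Theorem \ref{thm:jensen-unital-positive} applies and yields
$$f\big((\Psi\circ\Phi)(A)\big) \le (\Psi\circ\Phi)(f(A))$$
as an inequality of self-adjoint operators in $\mathfrak{C} \subseteq \mathscr{R}$. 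The crucial simplification is that $\Psi$ restricts to the identity on its range, so $(\Psi\circ\Phi)(A) = \Psi(B) = B = \Phi(A)$; hence the left-hand side is exactly $f(\Phi(A))$ (the functional calculus being computed consistently in $\mathfrak{C}$ and in $\mathscr{R}$), while the right-hand side is $\Psi(\Phi(f(A)))$.

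To finish, I would apply the tracial state $\tau$ to the operator inequality $f(\Phi(A)) \le \Psi(\Phi(f(A)))$. Positivity of $\tau$ preserves the order, and the trace-preserving property of $\Psi$ gives $\tau(\Psi(\Phi(f(A)))) = \tau(\Phi(f(A)))$, whence
$$\tau(f(\Phi(A))) \le \tau(\Phi(f(A))),$$
as desired. I want to stress that $\Phi$ itself is \emph{not} assumed to be trace-preserving; the trace invariance invoked in the final step comes entirely from $\Psi$, which is why the abelian compression is essential and not a mere convenience.

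The main obstacle is conceptual rather than computational: Theorem \ref{thm:jensen-unital-positive} requires an abelian target, yet here the target is the non-abelian algebra $\mathscr{R}$, and the operator inequality $f(\Phi(A)) \le \Phi(f(A))$ need not hold. The device of compressing by the trace-preserving conditional expectation onto the abelian algebra generated by $\Phi(A)$ is precisely what circumvents this, since it abelianizes the codomain and preserves the trace while leaving $\Phi(A)$ fixed. The only points demanding care are verifying that $\mathfrak{C}$ is a genuine von Neumann subalgebra containing $B$ (so that $\Psi$ exists and fixes $B$) and that the continuous functional calculus $f(B)$ agrees whether computed in $\mathfrak{C}$ or in $\mathscr{R}$.
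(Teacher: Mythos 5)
Your proof is correct and follows essentially the same route as the paper's: compose $\Phi$ with the trace-preserving conditional expectation onto an abelian von Neumann subalgebra containing $\Phi(A)$, apply the abelian-target Jensen inequality (Theorem \ref{thm:jensen-unital-positive}), and then apply $\tau$. The only cosmetic difference is that the paper takes the abelian subalgebra to be a masa containing $\Phi(A)$ rather than the von Neumann algebra generated by $\Phi(A)$ and $I$; both choices serve the identical purpose.
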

\begin{proof}
As the spectrum of $A$ is contained in $[a, b]$, we note that $aI \le A \le bI$. Since $\Phi$ is a unital positive map, we conclude that $aI = \Phi(aI)  \le \Phi(A) \le \Phi(bI) = bI$. Thus $\Phi(A)$ also has spectrum in $[a, b]$. 

Let $\mathscr{A}$ be a \emph{masa} of $\mathscr{R}$ containing $\Phi(A).$ By Theorem 2.8, there is a unique trace-preserving conditional expectation $\Psi : \mathscr{R} \rightarrow \mathscr{A}$. Note that $\Psi \circ \Phi$ is a unital positive map into a commutative von Neumann algebra $\mathscr{A}$, and $(\Psi \circ \Phi)(A) = \Phi(A)$. From Theorem \ref{thm:jensen-unital-positive},  we have that $$f(\Phi(A)) = f((\Psi \circ \Phi)(A)) \le (\Psi \circ \Phi)(f(A)).$$
Using the positivity of the trace and trace-preserving nature of $\Psi$, we conclude that $$\tau(f(\Phi(A)) \le \tau(\Psi(\Phi(f(A)))) = \tau(\Phi(f(A))).$$
\end{proof}

\begin{thm}
\label{thm:ineq-det-unit-positive}
\textsl{Let $\Phi : \mathscr{R} \rightarrow \mathscr{R}$ be a trace-preserving unital positive map. Let $f$ be a continuous positive function defined on the interval $[a, b] \subseteq \mathbb{R}$ such that $\log f$ is convex (in other words, $f$ is log-convex). Then for every positive operator $A$ in $\mathscr{R}$, we have the following inequality :
\begin{equation}
\label{eqn:ineq-det-unit-positive}
\Delta(f(\Phi(A)) \le \Delta(f(A))
\end{equation}
}
\end{thm}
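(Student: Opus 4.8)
The plan is to convert the determinant inequality into a trace inequality via the defining formula $\Delta(X) = \exp(\tau(\log X))$ for positive operators, and then to invoke Proposition \ref{prop:trace-unit-positive} together with the trace-preserving hypothesis on $\Phi$. First I would record the well-definedness of the operators involved. Since $f$ is continuous and strictly positive on the compact interval $[a,b]$, it is bounded below by a positive constant; hence $f(A)$ is a regular positive operator and $\log f(A)$ is a well-defined self-adjoint element of $\mathscr{R}$. Because $\Phi$ is unital and positive, the relation $aI \le A \le bI$ forces $aI \le \Phi(A) \le bI$ (exactly as noted at the start of the proof of Proposition \ref{prop:trace-unit-positive}), so $\Phi(A)$ also has spectrum in $[a,b]$, and consequently $f(\Phi(A))$ and $\log f(\Phi(A))$ are likewise well-defined positive and self-adjoint operators. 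Taking logarithms and using the strict monotonicity of the exponential function, the target inequality $\Delta(f(\Phi(A))) \le \Delta(f(A))$ is equivalent to
$$\tau(\log f(\Phi(A))) \le \tau(\log f(A)).$$

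Next I would set $g := \log f$. The log-convexity hypothesis on $f$ says precisely that $g$ is a continuous convex function on $[a,b]$, which is the setting of Proposition \ref{prop:trace-unit-positive}. Applying that proposition to the convex function $g$ and the self-adjoint operator $A$ yields $\tau(g(\Phi(A))) \le \tau(\Phi(g(A)))$. Finally, since $\Phi$ is trace-preserving, we have $\tau(\Phi(g(A))) = \tau(g(A))$, so chaining these two relations gives $\tau(g(\Phi(A))) \le \tau(g(A))$, which is exactly the reduced inequality above. Exponentiating recovers (\ref{eqn:ineq-det-unit-positive}).

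The argument is essentially a direct reduction, so I do not expect a serious obstacle; the only points requiring care are the well-definedness of the functional-calculus expressions (guaranteed by the strict positivity of $f$ on $[a,b]$ and the spectral containment $\sigma(\Phi(A)) \subseteq [a,b]$) and the observation that the log-convexity of $f$ is precisely the convexity of $g = \log f$ needed to apply Proposition \ref{prop:trace-unit-positive}. Unlike the earlier determinant inequalities in this section, I would not attempt to characterize the case of equality, since Proposition \ref{prop:trace-unit-positive} is stated for general unital positive maps and carries no equality clause; the generality of $\Phi$ is exactly the trade-off that forfeits control over the equality conditions.
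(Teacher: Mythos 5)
Your proposal is correct and follows essentially the same route as the paper's own proof: reduce (\ref{eqn:ineq-det-unit-positive}) to the trace inequality $\tau(\log f(\Phi(A))) \le \tau(\log f(A))$ by writing $\Delta(X) = \exp(\tau(\log X))$, apply Proposition \ref{prop:trace-unit-positive} to the convex function $\log f$, and then use the trace-preserving property of $\Phi$ before exponentiating. Your additional remarks on well-definedness (strict positivity of $f$ giving regularity of $f(A)$, and the spectral containment $\sigma(\Phi(A)) \subseteq [a,b]$) are points the paper leaves implicit, but they do not change the argument.
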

\begin{proof}
Using Proposition \ref{prop:trace-unit-positive}, from the convexity of $\log f$ and trace-preserving nature of $\Phi$, we observe that,
 $$\tau(\log f(\Phi(A))) \le \tau(\Phi(\log f(A))) = \tau(\log f(A)).$$
 Thus $\Delta(f(\Phi(A))) = \exp(\tau(\log f(\Phi(A)))) \le \exp(\tau(\log f(A))) = \Delta (f(A)).$
\end{proof}

\begin{cor}
\label{cor:general-hadamard-unit-positive}
\textsl{Let $\Phi : \mathscr{R} \rightarrow \mathscr{R}$ be a trace-preserving unital positive map. Then for every positive operator $A$ in $\mathscr{R}$, we have the following inequality :
\begin{equation}
\Delta(A) \le \Delta(\Phi(A))
\end{equation}
}
Further, if $A$ is also regular, then we have that
\begin{equation}
\Delta(\Phi(A^{-1})^{-1}) \le \Delta(A)
\end{equation}
\end{cor}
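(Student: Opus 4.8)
The plan is to read both inequalities off from Theorem~\ref{thm:ineq-det-unit-positive} by feeding it the right log-convex function, and then to handle the singular case by a routine limiting argument. The two inequalities are not symmetric in their hypotheses: the first is claimed for every positive $A$, the second only for regular $A$, and my strategy reflects this.

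First I would establish the right-hand inequality $\Delta(A) \le \Delta(\Phi(A))$ in the case where $A$ is regular. The key choice is $f(x) = 1/x$ on the interval $[a,b]$, where $a = \|A^{-1}\|^{-1} > 0$ and $b = \|A\|$ bound the spectrum of $A$; note that $a > 0$ precisely because $A$ is regular. This $f$ is continuous and positive on $[a,b]$ and log-convex, since $\log f(x) = -\log x$ is convex on $(0,\infty)$. Before applying the theorem I would check that $\Phi(A)$ also has spectrum in $[a,b]$: from $aI \le A \le bI$ and the fact that $\Phi$ is unital and positive, we get $aI = \Phi(aI) \le \Phi(A) \le \Phi(bI) = bI$. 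Theorem~\ref{thm:ineq-det-unit-positive} then yields $\Delta(\Phi(A)^{-1}) \le \Delta(A^{-1})$, and invoking the property $\Delta(B^{-1}) = \Delta(B)^{-1}$ on both sides and taking reciprocals rearranges this into $\Delta(A) \le \Delta(\Phi(A))$.

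Next I would remove the regularity assumption from the first inequality. For a general positive $A$ and $\varepsilon > 0$, the operator $A + \varepsilon I$ is regular, so the case just proved gives $\Delta(A + \varepsilon I) \le \Delta(\Phi(A + \varepsilon I)) = \Delta(\Phi(A) + \varepsilon I)$, where the last equality uses $\Phi(I) = I$ together with linearity. Letting $\varepsilon \to 0^{+}$ and using the continuity property $\lim_{\varepsilon \to 0^{+}} \Delta(T + \varepsilon I) = \Delta(T)$ for positive $T$ (applied to both $A$ and $\Phi(A)$) gives $\Delta(A) \le \Delta(\Phi(A))$ for every positive $A$. The second inequality is then a formal consequence of the first: given a regular positive $A$, the operator $A^{-1}$ is again regular and positive, so the first inequality applied to $A^{-1}$ reads $\Delta(A^{-1}) \le \Delta(\Phi(A^{-1}))$. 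Since $A^{-1} \ge \delta I$ forces $\Phi(A^{-1}) \ge \delta I$, the operator $\Phi(A^{-1})$ is regular and the inverse property of $\Delta$ applies; using it on both sides and taking reciprocals turns $\Delta(A)^{-1} \le \Delta(\Phi(A^{-1}))$ into $\Delta(\Phi(A^{-1})^{-1}) \le \Delta(A)$.

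I do not expect a serious obstacle here, since all the analytic content is already packaged in Theorem~\ref{thm:ineq-det-unit-positive} (and ultimately in Proposition~\ref{prop:trace-unit-positive}, which one could also use directly with the convex function $-\log$ to obtain the regular case). The only points requiring genuine care are bookkeeping ones: verifying that $\Phi(A)$ and $\Phi(A^{-1})$ land in the interval on which $f$ is defined and stay regular, so that the multiplicativity and inverse properties of $\Delta$ are legitimately applied, and justifying the interchange of limit and determinant in the singular case through the stated continuity property.
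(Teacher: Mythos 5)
Your proposal is correct and follows essentially the same route as the paper's proof: both apply Theorem~\ref{thm:ineq-det-unit-positive} with the log-convex function $f(x)=1/x$ to get $\Delta(\Phi(A)^{-1})\le\Delta(A^{-1})$ for regular $A$, rearrange via the inverse/multiplicativity property of $\Delta$, handle singular $A$ by the $A+\varepsilon I$ limiting argument, and deduce the second inequality by applying the first to $A^{-1}$. Your extra bookkeeping (locating the spectra of $\Phi(A)$ and $\Phi(A^{-1})$, confirming regularity) only makes explicit what the paper leaves implicit.
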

\begin{proof}
For $b > \varepsilon > 0$, the function $f : [\varepsilon, b] \rightarrow \mathbb{R}$ defined by $f(x) = \frac{1}{x}$, is a log-convex function. From Theorem \ref{thm:ineq-det-unit-positive}, for a regular positive operator $A$ with spectrum in $[\varepsilon, b]$, we have that $$\Delta(\Phi(A)^{-1}) \le \Delta(A^{-1}).$$
Thus using the multiplicativity of the Fuglede-Kadison determinant, we conclude that $$\Delta(A) \le \Delta(\Phi(A)).$$ 
If $A$ is any positive operator (not necessarily regular), applying the inequality to the regular positive operator $A+\varepsilon I$ (for $\varepsilon > 0$), we note that $\Delta(A+\varepsilon I) \le \Delta(\Phi(A+\varepsilon I)) = \Delta(\Phi(A) + \varepsilon I)$. Keeping in mind that $\Delta$ is a continuous function on $\mathscr{R}$, and taking the limit as $\varepsilon \rightarrow 0^{+}$, we note that $\Delta(A) \le \Delta(\Phi(A))$ for all positive operators $A$ in $\mathscr{R}$.

If $A$ is regular, then by the inequality proved above $\Delta(A^{-1}) \le \Delta(\Phi(A^{-1}))$. Using the multiplicativity of $\Delta$, we conclude that $$\Delta(\Phi(A^{-1})^{-1}) \le \Delta(A)
.$$

\end{proof}

\begin{cor}
\textsl{Let $\Phi : \mathscr{R} \rightarrow \mathscr{R}$ be a trace-preserving unital positive map. Then for every regular positive operator $A$ in $\mathscr{R}$, we have that 
\begin{equation}
\Delta(I+\Phi(A)^{-1}) \le \Delta(I + A^{-1}).
\end{equation} }
\end{cor}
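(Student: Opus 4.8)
The plan is to recognize the perturbed operator $I + A^{-1}$ as $f(A)$ for the function $f(x) = 1 + \frac{1}{x}$ and to invoke the log-convex determinant inequality of Theorem \ref{thm:ineq-det-unit-positive}, exactly as Corollary \ref{cor:ineq-det-perturb} was obtained from Corollary \ref{cor:ineq-det-op-mon} in the conditional-expectation setting. Since $A$ is a regular positive operator, its spectrum is contained in an interval $[\varepsilon, b]$ with $\varepsilon = \|A^{-1}\|^{-1} > 0$ and $b = \|A\|$, and on this interval $f(x) = 1 + \frac{1}{x}$ is continuous and strictly positive, so it is a legitimate candidate for Theorem \ref{thm:ineq-det-unit-positive}.

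First I would verify that $f$ is log-convex on $[\varepsilon, b]$. Writing $\log f(x) = \log(x+1) - \log x$, a direct differentiation gives $(\log f)''(x) = \frac{1}{x^2} - \frac{1}{(x+1)^2}$, which is strictly positive for $x > 0$ because $x < x+1$. Hence $\log f$ is (strictly) convex on $[\varepsilon, b]$, so $f$ is log-convex as required by the hypothesis of Theorem \ref{thm:ineq-det-unit-positive}.

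Next I would check that the two sides of the asserted inequality are genuinely $f(\Phi(A))$ and $f(A)$. Since $\varepsilon I \le A \le bI$ and $\Phi$ is a unital positive map, one has $\varepsilon I = \Phi(\varepsilon I) \le \Phi(A) \le \Phi(bI) = bI$, so $\Phi(A)$ is also regular with spectrum in $[\varepsilon, b]$; consequently $f(A) = I + A^{-1}$ and $f(\Phi(A)) = I + \Phi(A)^{-1}$ are both well defined via the continuous functional calculus. Applying Theorem \ref{thm:ineq-det-unit-positive} to $f$ and the positive operator $A$ then yields $\Delta(f(\Phi(A))) \le \Delta(f(A))$, which is precisely $\Delta(I + \Phi(A)^{-1}) \le \Delta(I + A^{-1})$.

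There is essentially no serious obstacle here: the argument reduces to a one-line application of Theorem \ref{thm:ineq-det-unit-positive} once the correct function is identified. The only points requiring care are the elementary log-convexity computation for $f(x) = 1 + \frac{1}{x}$ and the bookkeeping ensuring that the spectra of both $A$ and $\Phi(A)$ lie in a closed subinterval of $(0, \infty)$, so that the functional calculus produces exactly the perturbed operators appearing in the statement. In contrast to Corollary \ref{cor:ineq-det-perturb}, I would not expect a clean equality characterization, since Theorem \ref{thm:ineq-det-unit-positive} operates at the level of general trace-preserving unital positive maps, where the sharp equality conditions from the conditional-expectation setting are unavailable.
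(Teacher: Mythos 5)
Your proof is correct and follows essentially the same route as the paper: both identify $f(x) = 1 + \frac{1}{x}$ on an interval $[\varepsilon, b]$ containing the spectrum of $A$, verify log-convexity via the computation $(\log f)''(x) = \frac{1}{x^2} - \frac{1}{(x+1)^2} > 0$, and then apply Theorem \ref{thm:ineq-det-unit-positive}. Your additional check that $\Phi(A)$ also has spectrum in $[\varepsilon, b]$ is sound (and is already built into Proposition \ref{prop:trace-unit-positive}), and your closing remark about the absence of an equality characterization matches the paper's own caveat for the unital-positive-map setting.
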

\begin{proof}
As $A$ is a regular positive operator, there is an $\varepsilon > 0$ such that $\varepsilon I \le A$. For $b > \varepsilon > 0$, the function $f : [\varepsilon, b] \rightarrow \mathbb{R}$ defined by $f(x) = \frac{x+1}{x} = 1 + \frac{1}{x}$, is a log-convex function as the second derivative of $\log (\frac{x+1}{x}) = \log (x+1) - \log x$ is $\frac{1}{x^2} - \frac{1}{(x+1)^2}$ which is greater than $0$. The required inequality is just inequality (\ref{eqn:ineq-det-unit-positive}) for the $f$ considered above.
\end{proof}

\begin{cor}
\textsl{Let $\Phi : \mathscr{R} \rightarrow \mathscr{R}$ be a trace-preserving unital positive map. For positive operators $A, B$ in $\mathscr{R}$ with $A$ being regular, the following inequality holds :
\begin{equation}
\label{eqn:ineq-unit-positive-matic}
\frac{\Delta(A + B)}{\Delta(A)} \le \frac{\Delta(\Phi(A) + \Phi(B))}{\Delta(\Phi(A^{-1})^{-1})}
\end{equation} }
\end{cor}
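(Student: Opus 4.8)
The plan is to realize the claimed inequality as the product of the two separate estimates already recorded in Corollary~\ref{cor:general-hadamard-unit-positive}, exactly in the spirit of the alternative argument sketched (in the commented-out block) for Corollary~\ref{cor:hadamard-cor1}. First I would observe that since $A$ is positive and regular and $B$ is positive, the operator $A+B$ is again a regular positive operator; moreover $A^{-1} \ge \|A\|^{-1} I$ together with the unitality and positivity of $\Phi$ gives $\Phi(A^{-1}) \ge \|A\|^{-1}I$, so that $\Phi(A^{-1})^{-1}$ is regular as well. Consequently all four Fuglede--Kadison determinants appearing in the statement are strictly positive and the two displayed ratios are meaningful.

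For the numerator, I would apply the first inequality of Corollary~\ref{cor:general-hadamard-unit-positive} to the positive operator $A+B$, obtaining $\Delta(A+B) \le \Delta(\Phi(A+B))$. By linearity of $\Phi$ we have $\Phi(A+B) = \Phi(A) + \Phi(B)$, and hence $\Delta(A+B) \le \Delta(\Phi(A) + \Phi(B))$.

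For the denominator, I would apply the second inequality of Corollary~\ref{cor:general-hadamard-unit-positive} to the regular positive operator $A$, which yields $\Delta(\Phi(A^{-1})^{-1}) \le \Delta(A)$. Since both quantities are strictly positive, passing to reciprocals reverses the inequality and gives $\frac{1}{\Delta(A)} \le \frac{1}{\Delta(\Phi(A^{-1})^{-1})}$.

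Finally I would multiply these two inequalities; this is legitimate because every quantity involved is a strictly positive real number and, for $0 < a \le b$ and $0 < c \le d$, one has $ac \le bd$. Taking $a = \Delta(A+B)$, $b = \Delta(\Phi(A)+\Phi(B))$, $c = \Delta(A)^{-1}$, and $d = \Delta(\Phi(A^{-1})^{-1})^{-1}$, this produces precisely $\frac{\Delta(A+B)}{\Delta(A)} \le \frac{\Delta(\Phi(A) + \Phi(B))}{\Delta(\Phi(A^{-1})^{-1})}$, which is the desired inequality~(\ref{eqn:ineq-unit-positive-matic}). There is no genuine obstacle in this argument: the entire analytic content is already packaged into Corollary~\ref{cor:general-hadamard-unit-positive}, and the only points deserving a moment's care are the verification of positivity and regularity (ensuring that the reciprocals and the multiplication of inequalities are valid), together with the trivial bookkeeping that $\Phi(A^{-1})^{-1}$ is regular.
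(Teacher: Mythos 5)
Your proposal is correct and follows essentially the same route as the paper: both deduce the result by multiplying the two estimates packaged in Corollary~\ref{cor:general-hadamard-unit-positive}, the only cosmetic difference being that you invoke its second inequality and pass to reciprocals, while the paper rewrites $1/\Delta(A) = \Delta(A^{-1}) \le \Delta(\Phi(A^{-1})) = 1/\Delta(\Phi(A^{-1})^{-1})$ directly via multiplicativity of $\Delta$.
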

\begin{proof}
Using Corollary \ref{cor:general-hadamard-unit-positive}, we observe that the following inequalities hold,
$$\Delta(A + B) \le \Delta(\Phi(A+B)) = \Delta(\Phi(A) + \Phi(B)),$$ $$\frac{1}{\Delta(A)} = \Delta(A^{-1}) \le \Delta(\Phi(A^{-1})) = \frac{1}{\Delta(\Phi(A^{-1})^{-1})} .$$
Multiplying both the inequalities gives us (\ref{eqn:ineq-unit-positive-matic}).
\end{proof}

\section{Applications}
\label{sec:app}
In the previous section, several determinant inequalities were proved in a general setting. Here we make the appropriate choices of the finite von Neumann algebra $\mathscr{R}$, the von Neumann subalgebra $\mathscr{S}$ of $\mathscr{R}$, and the trace-preserving conditional expectation $\Phi$, to obtain the inequalities mentioned in the introduction. We also provide several subtle improvements to Theorem 1.3 and Theorem 1.4.

1. {\bf Hadamard's Inequality.} We follow the notation in Example \ref{ex:cond-exp-1}. Let $\mathscr{R} = M_n(\mathbb{C})$, $\mathscr{S} = D_n(\mathbb{C})$, and $\Phi : \mathscr{R} \rightarrow \mathscr{S}$ be given by $\Phi(A) := \mathrm{diag}(a_{11}, \cdots, a_{nn})$. For a positive-definite matrix $A$, we have from Theorem \ref{thm:general-hadamard} that $\sqrt[n]{\det A} = \Delta(A) \le \Delta(\Phi(A)) = \sqrt[n]{a_{11} \cdots a_{nn}}.$ Taking $n^{\textrm{th}}$ powers on both sides, we obtain Hadamard's inequality, and equality holds if and only if $\Phi(A) = A$ {\it i.e.}\ $A$ is a diagonal matrix.

For the remaining three inequalities, we are in the setting of Example \ref{ex:cond-exp-2}. Let $\mathscr{R} = M_n(\mathbb{C})$, $\mathscr{S} = M_{n_1}(\mathbb{C})\oplus \cdots \oplus M_{n_k}(\mathbb{C})$, and $\Phi : \mathscr{R} \rightarrow \mathscr{S}$ be given by $\Phi(A) = \mathrm{diag}(A_{11}, \cdots, A_{kk})$.

2. {\bf Fischer's Inequality.}  For a positive-definite matrix $A$, we have from Theorem \ref{thm:general-hadamard} that $\det A = \Delta(A)^n \le \Delta(\Phi(A))^n = \det (\mathrm{diag}(A_{11}, \cdots, A_{kk})) = (\det A_{11}) \cdots (\det A_{kk})$. This gives us Fischer's inequality, and equality holds if and only if $\Phi(A) = A$ {\it i.e.}\ $A$ is a block diagonal matrix.

3. {\bf Proof of Theorem \ref{thm:det-ineq-matic-1} .} Consider a positive-definite matrix $C$ in $M_n(\mathbb{C})$, and a positive-definite matrix $D = \mathrm{diag}(D_1, \cdots, D_k)$ in $M_{n_1}(\mathbb{C})\oplus \cdots \oplus M_{n_k}(\mathbb{C})$. Let the principal diagonal blocks of $C$ be denoted by $C_1, \cdots, C_k$. We observe that,  $$ \frac{\det(C_1 + D_1)}{\det(C_1)} \cdots \frac{\det(C_k + D_k)}{\det(C_k)} = \frac{\det (\mathrm{diag}(C_1 + D_1, \cdots, C_k + D_k))}{\det (\mathrm{diag}(C_1, \cdots, C_k))}= \frac{\det(\Phi(C) + D)}{\det(\Phi(C))}.$$
Thus, from Theorem \ref{thm:ineq-det-matic}, we have that, $$\frac{\det(\Phi(C) + D)}{\det(\Phi(C))} = \left( \frac{\Delta(\Phi(C) + D)}{\Delta(\Phi(C))} \right)^n \le \left( \frac{\Delta(C + D)}{\Delta(C)} \right)^n = \frac{\det(C + D)}{\det(C)}.$$ This proves Theorem \ref{thm:det-ineq-matic-1} and equality holds if and only if $\Phi(C) = C$ {\it i.e.}\ $C$ is in block diagonal form. If $D$ is positive-semidefinite, the inequality still holds but the equality condition may not be applicable as noted in the parenthetical remark following the statement of Theorem \ref{thm:ineq-det-matic}.

4. {\bf Proof of Theorem \ref{thm:det-ineq-matic-2} .} Consider a positive-definite matrix $C$ in $M_n(\mathbb{C})$, and a positive-semidefinite matrix $D = \mathrm{diag}(D_1, \cdots, D_k)$ in $M_{n_1}(\mathbb{C})\oplus \cdots \oplus M_{n_k}(\mathbb{C})$. Let $C_1, \cdots, C_k$ be positive-definite matrices such that the principal diagonal blocks of $C^{-1}$ are given by $C_1 ^{-1}, \cdots, C_k ^{-1}$; in other words, $\Phi(C^{-1}) = $ $\mathrm{diag}(C_1 ^{-1}, \cdots, C_k ^{-1})$ or $\Phi(C^{-1})^{-1} = $ $\mathrm{diag}(C_1, \cdots, C_k)$. We observe that, $$\frac{\det(C_1 + D_1)}{\det(C_1)} \cdots \frac{\det(C_k + D_k)}{\det(C_k)} = \frac{\det (\mathrm{diag}(C_1 + D_1, \cdots, C_k + D_k))}{\det (\mathrm{diag}(C_1, \cdots, C_k))}= \frac{\det(\Phi(C^{-1})^{-1} + D)}{\det(\Phi(C^{-1})^{-1})}.$$ Thus, from Corollary \ref{cor:hadamard-cor1}, we have that,
$$\frac{\det(\Phi(C^{-1})^{-1} + D)}{\det(\Phi(C^{-1})^{-1})} = \left( \frac{\Delta(\Phi(C^{-1})^{-1} + D)}{\Delta(\Phi(C^{-1})^{-1})} \right)^n \ge \left( \frac{\Delta(C + D)}{\Delta(C)} \right)^n = \frac{\det(C + D)}{\det(C)}.$$ This proves Theorem \ref{thm:det-ineq-matic-2} and equality holds if and only if $D^{\frac{1}{2}} C^{-1}D^{\frac{1}{2}}$ is in block diagonal form {\it i.e.}\ $D^{\frac{1}{2}} C^{-1}D^{\frac{1}{2}}$ is in $M_{n_1}(\mathbb{C})\oplus \cdots \oplus M_{n_k}(\mathbb{C})$.

\begin{remark}
\label{rmrk:ineq-det-matic-matrices}
The necessity of using $\textrm{diag}(D_1, \cdots, D_k)$ in Theorem \ref{thm:det-ineq-matic-2} instead of an arbitrary positive-definite matrix $D$ with $D_1, \cdots, D_k$ in its principal diagonal blocks, is illuminated by remark \ref{rmrk:ineq-det-matic-var}. For a positive definite matrix $C$ with principal diagonal blocks $C_1, \cdots, C_k$ and a positive semi-definite matrix $D$ with principal diagonal blocks $D_1, \cdots, D_k$, the inequality $$ \frac{\det(C_1 + D_1)}{\det(C_1)} \cdots \frac{\det(C_k + D_k)}{\det(C_k)} \le \frac{\det(C + D)}{\det(C)}$$ does not hold true in general.
\end{remark}

\medskip


\begin{thebibliography}{1}

\bibitem{jensen} J. Antezana, P. Massey, D. Stojanoff; \emph{Jensen's inequality for spectral order and submajorization}, J. Math. Anal. Appl. 331 (2007), no. 1, 297-307.

\bibitem{general-hadamard} W. Arveson; \emph{Analyticity in operator algebras},  Amer. J. Math. 89 (1967), 578-642. 

\bibitem{bhatia-ma} R. Bhatia; \emph{Matrix Analysis}, Graduate Texts in Mathematics, Springer-Verlag, New York (1996), ISBN-13: 978-0387948461.

\bibitem{bhatia-pdm} R. Bhatia; \emph{Positive Definite Matrices}, Princeton Series in Applied Mathematics, Princeton Univ. Press, Princeton (2007), ISBN-13: 978-0691168258 .

\bibitem{choi}M. D. Choi; \emph{A Schwarz inequality for positive linear maps on $C^∗$-algebras},, Illinois J. Math., 18 (1974), 565-574.

\bibitem{davis} C. Davis;  \emph{A Schwarz inequality for convex operator functions}, Proc. Amer. Math. Soc. 8 (1957), 42-44.

\bibitem{harpe} P. de la Harpe; \emph{Fuglede-Kadison determinant: theme and variations},  Proc. Natl. Acad. Sci. USA, Vol. 110, No. 40 (2013), 15864-15877.

\bibitem{fischer} E. Fischer; \emph{{\"U}ber den Hadamardschen Determinentsatz}, Arch. Math. u. Phys. (3), 13 (1907), 32-40.

\bibitem{fk-det} B. Fuglede, R. Kadison; \emph{Determinant theory in finite factors}, Annals of Math. 55 (1952), 520-530.

\bibitem{hadamard} J. Hadamard; \emph{R\'{e}solution d'une question relative aux determinants
}, Bull. des sciences math. (17)(1893), 240-246.

\bibitem{hansen-pedersen} F. Hansen, G. Pedersen; \emph{Jensen's Inequality for Operators and L\"owner's Theorem}, Mathematische Annalen (1981) Volume 258, 229-242.


\bibitem{han-ped-jen} F. Hansen, G. Pedersen; \emph{Jensen's operator inequality}, Bull. London Math. Soc. (2003) 35 (4): 553-564.

\bibitem{kad-schwarz-ineq} R. Kadison; \emph{A Generalized Schwarz Inequality and Algebraic Invariants for Operator Algebras},  Annals of Mathematics, Second Series, Vol. 56, No. 3 (1952), 494-503.

\bibitem{cond-expect-kad} R. Kadison; \emph{Non-commutative conditional expectations and their applications}, Operator algebras, quantization, and noncommutative geometry, Contemp. Math., 365, Amer. Math. Soc., Providence, RI, (2004), 143-179. 

\bibitem{kadison-ringrose} R. Kadison, J. Ringrose; \emph{Fundamentals of the Theory of Operator Algebras}, \emph{Vol. I-II}, Academic Press, San Diego, 1986 (Reprinted by AMS in 1997).

\bibitem{kraus} F. Kraus; \emph{{\"U}ber monotone Matrixfunktionen}, Mathematische Zeitschrift, 38 (1934), 177-216.

\bibitem{lowner} K. L{\"o}wner; \emph{{\"U}ber konvexe Matrixfunktionen}, Mathematische Zeitschrift, 41 (1936), 18-42.


\bibitem{matic} I. Matic; \emph{Inequalities with determinants of perturbed positive matrices},  Linear Algebra and its Applications, Volume 449 (2014), 166-174.

\bibitem{ring-op-2} F. J. Murray, J. von Neumann; \emph{On rings of operators}, Ann. Math. (2), 37(1) (1936), 116-229.

\bibitem{cond-exp} M. Nakumara, M. Takesaki, H. Umegaki; \emph{A remark on the expectations of operator algebras}, Kodai Math. Sem. Rep., Volume 12, Number 2 (1960), 82-90.

\bibitem{int-eqn} F. Smithies; \emph{Integral Equations}, Cambridge Tracts in Mathematics and Mathematical Physics, No. 49, Cambridge University Press, London, (1958). 

\bibitem{stormer-paper} E. St{{\o}}rmer; \emph{Positive linear maps of operator algebras}, Acta Mathematica, 110(1) (1963), 233-278.

\bibitem{stormer} E. St{{\o}}rmer; \emph{Positive linear maps of operator algebras}, Springer Monographs in Mathematics, Springer (2013), ISBN-13: 978-3642429132.

\bibitem{tomiyama} J. Tomiyama, \emph{On the projection of norm one in $W^*$-algebras}, Proc. Japan Acad. Volume 33, Number 10 (1957), 608-612.

\bibitem{umegaki} H. Umegaki; \emph{Conditional expectation in an operator algebra}, T\^{o}hoku Math. J. (2) Volume 6, Number 2-3 (1954), 177-181.

\bibitem{double-commutant} J. von Neumann; \emph{Zur algebra der funktionaloperationen und theorie der normalen operatoren}, Math. Ann. 102 (1930), 370-427.

\end{thebibliography}
\end{document}